\theoremstyle{plain}
\newtheorem{theorem}{Theorem}
\newtheorem{proposition}[theorem]{Proposition}
\newtheorem{lemma}[theorem]{Lemma}
\newtheorem{corollary}[theorem]{Corollary}
\theoremstyle{definition}
\newtheorem{remark}[theorem]{Remark}
\numberwithin{equation}{section}
\numberwithin{theorem}{section}
\numberwithin{equation}{section}
\newcommand{\La}{\mathcal{L}_a}
\newcommand{\C}{{\mathbb{C}}}
\newcommand{\R}{{\mathbb{R}}}
\let\Re=\undefined\DeclareMathOperator*{\Re}{Re}
\let\Im=\undefined\DeclareMathOperator*{\Im}{Im}
\renewcommand{\L}{\mathcal{L}_a}
\def\leq{\leqslant}
\def\geq{\geqslant}
\def\ge{\geqslant}
\def\le{\leqslant}
\begin{document}

\title[Focusing NLH with inverse square potential]{Focusing  nonlinear Hartree equation with inverse-square potential}
\author[Y. Chen]{Yu Chen}
\address{Graduate School of China Academy Of Engineering Physics,  \ Beijing, \ China, \ 100089, }
\email{chenyu17@gscaep.ac.cn}

\author[J. Lu]{Jing Lu}
\address{College of Science, China Agricultural University, \ Beijing, \ China, \ 100193, }
\email{lujing326@126.com}

\author[F. Meng]{Fanfei Meng}
\address{Graduate School of China Academy of Engineering Physics,  \ Beijing, \ China, \ 100089, }
\email{mengfanfei17@gscaep.ac.cn}

\subjclass[2000]{Primary 35Q55; Secondary 47J35}

\date{\today}

\keywords{Hartree equation, inverse-square potential, scatter,  Morawetz estimate. }
\maketitle

\begin{abstract}
In this paper, we consider the scattering theory of the radial solution to  focusing energy-subcritical Hartree equation with inverse-square potential in the energy space $H^{1}(\mathbb{R}^d)$ using the method from \cite{Dodson2016}. The main difficulties are the equation  is \emph{not} space-translation invariant and the nonlinearity is non-local. Using the radial Sobolev embedding and a virial-Morawetz type estimate we can exclude the concentration of mass near the origin. Besides, we can overcome the weak dispersive estimate when $a<0$, using the dispersive estimate established by \cite{zheng}.
\end{abstract}

\maketitle
\section{Introduction}
We consider the energy-subscritical Hartree equation with inverse-square potential:
\begin{equation}\tag{$\text{NLH}_a$}
\left\{
\begin{aligned}
&iu_{t}=\mathcal{L}_{a}u-(|\cdot|^{-\gamma}\ast|u|^{2})u, ~~~~ (t,x) \in \mathbb{R} \times \mathbb{R}^{d} \\
& u(0,x) = u_0(x), ~~~~ x \in \mathbb{R}^{d}.
\end{aligned}
\right.
\label{problem-eq: NLS-H}
\end{equation}
where $ u: \mathbb{R}\times \mathbb{R}^{d}\rightarrow \mathbb{C} $, $\mathcal{L}_{a}=-\Delta+\frac{a}{|x|^{2}}, a>-(\tfrac{d-2}2)^2$, $ 2 < \gamma < \min\{4,  d\}$, and $ * $ denotes the convolution in $ \mathbb{R}^{d} $.

Solutions to \eqref{problem-eq: NLS-H} conserve the \emph{mass} and \emph{energy}, defined respectively by
\begin{eqnarray}
 \nonumber 
  M(u) & = & \frac{1}{2}\int_{\mathbb R^{d}} |u|^{2}\equiv M(u_{0}),   \\
  E(u) & = & H(u) - P(u)\equiv E(u_{0}),
\end{eqnarray}
where
\[
\begin{aligned}
  H(u) & = \frac{1}{2}\int_{\mathbb R^{d}}\left(|\nabla u(x)|^{2} + \frac{a}{|x|^{2}} |u(x)|^{2}\right) dx,   \\
  P(u) & = \frac{1}{4} \iint_{\mathbb R^{d} \times \mathbb R^{d}} \frac{|u(x)|^{2}|u(y)|^{2}}{|x-y|^{\gamma}}\; dx dy.
\end{aligned}
\]
Note that, if $ a=0 $, then \eqref{problem-eq: NLS-H} reduces to the standard nonlinear Hartree equation:
\begin{equation}\label{nls0}\tag{$\text{NLH}_0$}
(i\partial_t+\Delta) u = \mu(|\cdot|^{-\gamma}\ast|u|^{2})u, ~~ \mu=\pm1.
\end{equation}
where $\mu=-1$ is called focusing case, and $\mu=+1$ is called defocusing.

Similar as \eqref{nls0},  the equation \eqref{problem-eq: NLS-H} enjoys the scaling symmetry
\[
  u(t,  x)\to u^{\lambda}(t,  x):=\lambda^{\frac{d+2-\gamma}{2}}u(\lambda^{2}t,  \lambda x).
\]
This symmetry identifies $ \dot{H}^{s_{c}}_{x}(\mathbb{R}^{d}) $ as the scaling-critical space of initial data,  where $s_{c}=\frac{\gamma}{2}-1$. The case we consider is the energy-subcritical problem,  which corresponds to $ 0<s_{c}<1 $.


As we know, a large amount of work has been devoted to the study of the scattering theory about the dispersive equations.
The scattering theory for the equation \eqref{nls0} with $ f(u) = \mu (|\cdot|^{-\gamma} \ast|u|^{2})u, 0 < \gamma \leq \min\{4,d\} $ has been studied in \cite{MXZ1,GV,8,Na1}. Concerning the energy-subcritical case $ 2 < \gamma < \min\{4,d\} $, using the method of Morawetz and Strauss \cite{MS}, J. Ginibre and G. Velo \cite{GV} developed the scattering theory in the energy space. Nakanishi \cite{Na1} improved the results by a new Morawetz estimate which doesn't depend on nonlinearity. In \cite{MXZ1}, C. Miao, G. Xu and L. Zhao obtained the small data scattering result for the energy-critical case in the energy space.  For the defocusing case $ \mu = 1 $: C. Miao, G. Xu and L. Zhao \cite{8} took advantage of a new kind of the localized Morawetz estimate, which is also independent of nonlinearity, to rule out the possibility of the energy concentration at origin and established the scattering results in the energy space for the radial data in dimension $ d \geq 5 $. For the focusing case $ \mu = -1 $, the dynamics of the solution to \eqref{nls0} will be more complex: In mass-critical case, C. Miao, G. Xu and L. Zhao in \cite{6} have proved the blow-up solution in finite time whose mass equals the mass of ground state must be a pseudo-conformal transformation of the ground state. In energy-critical case, D. Li, C. Miao and X. Zhang in \cite{7} established the scattering theory of the maximal lifespan interval solution whose energy is less than the energy of the ground state.

There are also a lot of results about scattering and blow-up theory for the nonlinear Schr\"{o}dinger equation with inverse-square potential (NLS$_{a}$). For the energy-critical case, R. Killip, C. Miao, M. Visan, J. Zhang and J. Zheng in  \cite{1} obtained the  scattering theory of the solution to (NLS$_{a}$) with the defocusing case in $d=3$  and focusing case in $ d \geq 3 $, using the concentration-compactness argument which is firstly introduced by R. Killip and M. Visan \cite{KV}. For the 3D cubic focusing case, R. Killip, J. Murphy, M. Visan and J. Zheng in \cite{2} showed the scattering and blow-up theory of the solution to (NLS$_{a}$). Soon, J. Lu, C. Miao and J. Murphy in \cite{3} established the scattering theory of the solution in any dimension, with the help of concentration-compactness argument.

The above progress in studying the scattering result of (NLS$_{a}$) or \eqref{nls0} is due to the approach based on the  concentration compactness argument to provide a linear profile decomposition. Recently, B. Dodson and J. Murphy in \cite{Dodson2016} used a new idea to simplify the proof of scattering below the ground state for the 3D radial focusing cubic NLS. The idea is that the virial-Morawetz type estimate established by Ogawa-Tsutsumi \cite{Ogawa1991} is directly applicable to arbitrary global solutions below the ground state energy, thanks to its variational characterization.
Later, C. Sun, H. Wang, X. Yao, J. Zheng \cite{SWYZ} adapted the strategy in \cite{Dodson2016} to prove the scattering of radial solutions below sharp threshold for certain focusing fractional NLS with cubic nonlinearity. F. Meng \cite{meng} give a simple proof about the result in \cite{Gao2010}, using the strategy in \cite{Dodson2016} that avoids the use of concentration compactness. Recently, J. Zheng in \cite{zheng} utilized the method in \cite{Dodson2016} to establish the radial scattering result for the focusing (NLS$_{a}$) with radial initial data. The new ingredient in \cite{zheng} is to establish the dispersive estimate for radial functions which will be used  to overcome the weak dispersive estimate when $a<0$ in our paper.

Motivated by the aforementioned papers, our aim is to adapt the method in \cite{Dodson2016} to prove the scattering result of energy-subcritical Hartree equation with inverse-square potential in the energy space $H^1(\mathbb{R}^d)$.

Our  main result in this paper  is follows:
\begin{theorem}[Scattering]\label{T:main2} Assume $ a $, $ d $ and $ \gamma $ satisfy
\begin{align}\label{important}
\begin{cases}
a>-\tfrac{(d-2)^{2}}{4},
 \begin{cases}
 ~2<\gamma\leq 3, ~ d=3;\\
 ~2<\gamma\leq \min\{\frac{d}2,3\}, ~ d>3.
 \end{cases} \\
a>\big(\tfrac{\gamma-3}{2}\big)^2-\tfrac{(d-2)^{2}}{4},
 \begin{cases}
 ~3<\gamma <4, ~ d=3,4;\\
 ~3<\gamma< \min\{\frac{d}2,4\}, ~ d>4.
 \end{cases}
\end{cases}
\end{align}
Given $u_0\in H_x^1(\R^d)$ be radial and satisfy $ M( u_0 )^{\frac{4-\gamma}{2}} E( u_0 )^{\frac{\gamma-2}{2}}  < M( Q )^{\frac{4-\gamma}{2}}  E( Q )^{\frac{\gamma-2}{2}} $ and $ \Vert u_0 \Vert^{\frac{4-\gamma}{2}} _{L_{x}^2(\mathbb{R}^d)} \Vert u_0 \Vert^{\frac{\gamma-2}{2}}_{\dot{H}_{x}^1(\mathbb{R}^d)} \leqslant \Vert Q \Vert^{\frac{4-\gamma}{2}} _{L_{x}^2(\mathbb{R}^d)} \Vert Q \Vert^{\frac{\gamma-2}{2}}_{\dot{H}_{x}^1(\mathbb{R}^d)} $, where $ Q $ is the solution to
$
- \mathcal{L}_{a} Q + Q = -(\vert \cdot \vert^{-\gamma} * \vert Q \vert^{2}) Q.
$
\label{main}

Then the solution of \eqref{problem-eq: NLS-H} is global and scatters in $ H_{x}^1 $, i.e. there exists $u_{\pm}\in H_x^1(\mathbb{R}^d)$ such that
\begin{align*}
\lim_{t\rightarrow\pm\infty}\|u(t)-e^{-it\mathcal{L}_{a}}u_{\pm}\|_{H_x^1(\mathbb{R}^d)}=0.
\end{align*}
\end{theorem}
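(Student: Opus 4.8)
The plan is to follow the Dodson--Murphy road map adapted to the nonlocal Hartree nonlinearity and the inverse-square potential. First I would establish the variational theory: using the sharp Gagliardo--Nirenberg-type inequality associated to $\mathcal{L}_a$ for the Hartree term, $P(u)\lesssim \|u\|_{L^2}^{4-\gamma}\|u\|_{\dot H^1_a}^{\gamma}$ (or with $\mathcal{L}_a$-gradient), one identifies $Q$ as the optimizer and rewrites the subthreshold hypotheses $M(u_0)^{\frac{4-\gamma}2}E(u_0)^{\frac{\gamma-2}2}<M(Q)^{\frac{4-\gamma}2}E(Q)^{\frac{\gamma-2}2}$ together with the gradient condition as a coercivity statement. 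The point is that these scaling-invariant quantities are conserved (mass and energy) and that the set $\{\|u\|_{L^2}^{\frac{4-\gamma}2}\|u\|_{\dot H^1_a}^{\frac{\gamma-2}2}<\|Q\|_{L^2}^{\frac{4-\gamma}2}\|Q\|_{\dot H^1_a}^{\frac{\gamma-2}2}\}$ is invariant under the flow, yielding a uniform-in-time bound $\|u(t)\|_{H^1_a}\lesssim 1$ and, more importantly, a lower bound on the virial/Nehari functional: there is $\delta>0$ with $\int |\nabla u|^2+\tfrac{a}{|x|^2}|u|^2 - \tfrac{\gamma}{2}P(u)\ge \delta \|u(t)\|_{\dot H^1_a}^2$ for all $t$. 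Global existence is then immediate from the uniform $H^1$ bound together with the local well-posedness theory (Strichartz estimates for $e^{-it\mathcal{L}_a}$, for which the equivalence of Sobolev spaces $\|\mathcal{L}_a^{s/2}f\|_{L^p}\sim\|(-\Delta)^{s/2}f\|_{L^p}$ in the admissible range of $a$ is used).

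Next I would set up the scattering criterion: it suffices to prove a global spacetime bound, e.g. $u\in L^q_{t,x}$ for a suitable Strichartz-admissible pair, after which scattering in $H^1_x$ follows from a standard Duhamel/Strichartz argument using the dispersive and Strichartz estimates for $\mathcal{L}_a$ — here I would invoke the radial dispersive estimate of \cite{zheng} precisely to handle the case $a<0$, where the usual fixed-time $L^1\to L^\infty$ decay degrades. To obtain the global spacetime bound I would run a virial--Morawetz argument in the spirit of Ogawa--Tsutsumi \cite{Ogawa1991} and Dodson--Murphy: choose a radial cutoff weight $a_R(x)$ that agrees with $|x|^2$ for $|x|\le R$ and is bounded by $R|x|$ (or constant) for $|x|\ge R$, and consider the Morawetz action $M_R(t)=2\,\Im\int \overline{u}\,\nabla a_R\cdot\nabla u\,dx$. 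Differentiating in time and using the equation, the main positive term reproduces the coercive Nehari quantity $\int|\nabla u|^2+\tfrac a{|x|^2}|u|^2-\tfrac\gamma2 P(u)\gtrsim\delta$ on the region $|x|\le R$, while the error terms coming from the region $|x|\ge R$, from the Hessian of $a_R$ away from $|x|^2$, and — crucially — from the nonlocal tail of the convolution $|\cdot|^{-\gamma}\ast|u|^2$ split across $|x|\le R$ and $|x|>R$, must be shown to be $o(\delta)$ after time-averaging. Since $|M_R(t)|\lesssim R\|u(t)\|_{H^1}^2\lesssim R$ uniformly, averaging over $[0,T]$ and sending $T\to\infty$ forces $\frac1T\int_0^T\big(\text{mass of }u\text{ near origin at scale }R\big)\,dt\to 0$, i.e. there is no concentration of mass near $x=0$.

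The last step converts the no-concentration statement into the global spacetime bound. Using the radial Sobolev embedding (which gives pointwise decay $|u(t,x)|\lesssim|x|^{-(d-1)/2}\|u\|_{H^1}^{1/2}\cdots$ away from the origin) together with the Morawetz output, I would control $\|u\|$ on a Strichartz norm over long time intervals by a bootstrap: partition $\R$ into finitely many intervals on which the relevant nonlinear Strichartz norm is small, using the Hartree estimate to bound $(|\cdot|^{-\gamma}\ast|u|^2)u$ — here one uses Hardy--Littlewood--Sobolev together with the equivalence of Sobolev norms for $\mathcal{L}_a$ and the fact that the mass near the origin is negligible so the nonlocal interaction cannot feed back a large contribution. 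I expect the \textbf{main obstacle} to be precisely the nonlocality: in the Morawetz identity the Hartree term does not localize cleanly the way a power nonlinearity does, so the cross terms $\iint_{|x|\le R,\,|y|>R}\frac{|u(x)|^2|u(y)|^2}{|x-y|^\gamma}$ (and the commutator of the convolution with the cutoff) have to be estimated by hand; controlling these requires carefully exploiting $\gamma>2$ (for integrability near the diagonal), the radial decay of $u$, and the restriction $\gamma<\min\{d,4\}$ together with the sharper condition $a>(\tfrac{\gamma-3}2)^2-\tfrac{(d-2)^2}4$ when $\gamma>3$, which is exactly what is needed to keep the weighted/fractional estimates for $\mathcal{L}_a$ valid in that regime. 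A secondary technical point is ensuring all the Strichartz and dispersive machinery for $e^{-it\mathcal{L}_a}$ holds for radial data with the stated range of $a$, for which \cite{zheng} supplies the key input when $a<0$.
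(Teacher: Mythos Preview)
Your roadmap is the paper's: coercivity from the sharp Gagliardo--Nirenberg inequality for $\mathcal{L}_a$ (the paper's Section~4, culminating in a ``coercivity on balls'' lemma), a localized virial/Morawetz identity with weight $\omega\sim|x|^2$ for $|x|\le R$ and $\omega\sim R|x|$ for $|x|\ge 2R$, and a scattering criterion driven by the radial dispersive estimate of \cite{zheng} to cope with $a<0$. Your identification of the nonlocal cross terms $\iint_{|x|\le R,\ |y|>R}|u(x)|^2|u(y)|^2/|x-y|^\gamma$ as the main obstacle, to be handled by radial Sobolev and Hardy--Littlewood--Sobolev, is exactly what the paper does.

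Two small corrections where your sketch drifts from the actual argument. First, the Morawetz output is the time-average of the localized \emph{potential energy} $P(\chi_R u)$, not the local $L^2$ mass; the main positive term in $\frac{d}{dt}M_R$ is $\|\chi_R u\|_{\dot H^1_a}^2-\tfrac{\gamma}{4}P(\chi_R u)\gtrsim\delta' P(\chi_R u)$ by the ball-coercivity lemma. Second, the paper does not finish by partitioning time into small-Strichartz intervals. Instead it proves a concrete criterion ``$\liminf_{t\to\infty}\int_{|x|\le R}|u|^2\le\varepsilon^2\Rightarrow$ scattering'' by splitting the Duhamel integral at $T-\ell$ and $T$: the far-past piece $F_1$ is controlled by interpolating an $L^\infty_t$ bound (dispersive decay inside a ball of radius $R_1$, radial Sobolev outside) against a free Strichartz bound, and the near-past piece $F_2$ is small because the local-mass hypothesis propagates over the window $[T-\ell,T]$ to give smallness of $\|u\|_{L^\infty_t L^{2d/(d+2-\gamma)}_x}$. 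Your ``partition into intervals'' would likely also work, but it is not the mechanism the paper uses.
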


\begin{remark}
The restriction \eqref{important}  comes from the local well-posedness theory Theorem \ref{T:LWP}, where we should verify the equivalence of Sobolev spaces.
\end{remark}

Compared with NLS with inverse square potential, our difficulty lies in the fact that Hartree equation  has a nonlocal term. The proof of Theorem \ref{main} consists of two steps: Firstly, we prove a certain decay estimate which can be deduced from an improved priori Morawetz estimate.

\begin{proposition}[Improved Morawetz estimate]\label{decay}
Let $ \phi, Q $ be as in Theorem \ref{main}. For any $ \epsilon > 0 $, there exist $ T \in (0, +\infty), R \in (0, +\infty) $ such that if $ u: \mathbb{R}_{t} \times \mathbb{R}_{x}^d \to \mathbb{C} $ is a radial solution to \eqref{problem-eq: NLS-H} satisfying
\begin{equation}
\Vert u \Vert_{L_{t}^{\infty}H_{x}^{1}(\mathbb{R}\times\mathbb{R}^d)} \leqslant E,
\label{ass}
\end{equation}
then
\begin{equation}
\tfrac{1}{T}\int_0^T P(\chi_{R} u) \mathrm{d}t \lesssim_{\delta, u} \tfrac{1}{T} + \tfrac{1}{R}.
\label{Me}
\end{equation}
\end{proposition}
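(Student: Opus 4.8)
\emph{Strategy.} I would run a localized virial/Morawetz estimate of Ogawa--Tsutsumi and Dodson--Murphy type, adapted to the operator $\mathcal{L}_a$ and to the nonlocal nonlinearity. Fix a smooth radial weight $w_R:\R^d\to\R$ with $w_R(x)=|x|^2$ on $|x|\le R$, with $w_R''(r)\ge 0$ for all $r$, and with $|\nabla w_R|\lesssim R$ (one may let $w_R''$ decrease smoothly from $2$ to $0$ over $R\le|x|\le2R$ and vanish beyond, keeping $\nabla w_R$ bounded by $\lesssim R$), together with a radial cutoff $\chi_R$ supported in $\{|x|\le R\}$, $\chi_R\equiv1$ on $\{|x|\le R/2\}$, $|\nabla\chi_R|\lesssim R^{-1}$. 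Put
\[
  M_R(t):=2\,\Im\int_{\R^d}\overline{u(t,x)}\,\nabla w_R(x)\cdot\nabla u(t,x)\,dx .
\]
By \eqref{ass}, $|M_R(t)|\le 2\|\nabla w_R\|_{L^\infty}\|u(t)\|_{L^2}\|\nabla u(t)\|_{L^2}\lesssim_u R$ uniformly in $t$; this a priori bound is what feeds the time‑decaying term in \eqref{Me}.

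\emph{Virial identity and the coercive core.} Using \eqref{problem-eq: NLS-H}, integration by parts, and—for the nonlinearity—symmetrization in $x\leftrightarrow y$ with the evenness of $|\cdot|^{-\gamma}$, one gets, for radial $u$,
\[
  \tfrac{d}{dt}M_R(t)=4\!\int\! w_R''(|x|)\,|\nabla u|^2-\!\int\!\Delta^2 w_R\,|u|^2-2\!\int\!\big(\nabla w_R\!\cdot\!\nabla\tfrac{a}{|x|^2}\big)|u|^2+\mathcal N_R(t),
\]
with $\mathcal N_R(t)=\iint\big(\nabla w_R(x)-\nabla w_R(y)\big)\cdot\nabla_x|x-y|^{-\gamma}\,|u(x)|^2|u(y)|^2\,dx\,dy$. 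Since $w_R''\ge0$ and $w_R''\equiv2$ on $B_R$, the first term is $\ge8\int_{B_R}|\nabla u|^2$; on $B_R$ also $\Delta^2 w_R=0$, $\nabla w_R\cdot\nabla(a|x|^{-2})=-4a|x|^{-2}$, and $(\nabla w_R(x)-\nabla w_R(y))\cdot\nabla_x|x-y|^{-\gamma}=-2\gamma|x-y|^{-\gamma}$, so, keeping only the $B_R$ (and $B_R\times B_R$) pieces,
\[
  \tfrac{d}{dt}M_R(t)\ \ge\ 8\|\sqrt{\mathcal{L}_a}u\|_{L^2(B_R)}^2-8\gamma\,P(\mathbf 1_{B_R}u)-|\mathcal E_R(t)| ,
\]
$\mathcal E_R$ collecting all remaining terms. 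Now the key point: the subthreshold hypotheses on $u_0$, propagated by conservation of mass and energy, give $\sup_t\|u(t)\|_{L^2}^{\frac{4-\gamma}2}\|\sqrt{\mathcal{L}_a}u(t)\|_{L^2}^{\frac{\gamma-2}2}$ strictly below the corresponding quantity for $Q$, with a uniform gap; since $\|\chi_R u(t)\|_{L^2}\le\|u(t)\|_{L^2}$ and $\|\sqrt{\mathcal{L}_a}(\chi_R u(t))\|_{L^2}\le\|\sqrt{\mathcal{L}_a}u(t)\|_{L^2}+O_u(R^{-1})$, for $R$ large $\chi_R u(t)$ is itself below the sharp threshold with a uniform gap, so the sharp Gagliardo--Nirenberg/Hartree inequality (extremizer $Q$) yields $\delta>0$ with
\[
  8\|\sqrt{\mathcal{L}_a}(\chi_R u(t))\|_{L^2}^2-8\gamma\,P(\chi_R u(t))\ \ge\ \delta\,\|\sqrt{\mathcal{L}_a}(\chi_R u(t))\|_{L^2}^2\ \gtrsim_\delta\ P(\chi_R u(t)),\qquad t\in\R .
\]
Comparing $8\|\sqrt{\mathcal{L}_a}u\|_{L^2(B_R)}^2-8\gamma P(\mathbf 1_{B_R}u)$ with the left‑hand side above, the discrepancy splits into terms $\int_{B_R}(1-\chi_R^2)(|\nabla u|^2+\cdots)\ge0$—which only help—and cross/tail terms such as $\int_{B_R\setminus B_{R/2}}|u|^4$ and $R^{-2}\int|u|^2$, all $\lesssim_u R^{-1}$ by the radial Sobolev embedding $\||x|^{(d-1)/2}u(t)\|_{L^\infty_x}\lesssim_u 1$.

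\emph{Error terms and conclusion.} It remains to bound $|\mathcal E_R(t)|$: the biharmonic piece is $\lesssim_u R^{-2}\int_{R\le|x|\le2R}|u|^2+R\int_{|x|\ge2R}|x|^{-3}|u|^2\lesssim_u R^{-1}$; the inverse‑square tail is $\big|4a\int_{|x|\ge R}w_R'(|x|)|x|^{-3}|u|^2\big|\lesssim_u|a|R\int_{|x|\ge R}|x|^{-3}|u|^2\lesssim_u|a|R^{-1}$; and for the nonlocal tail one uses $|\nabla w_R(x)-\nabla w_R(y)|\lesssim\min(|x-y|,R)$, so the integrand of $\mathcal N_R$ is $\lesssim|x-y|^{-\gamma}|u(x)|^2|u(y)|^2$ and, restricting to $\{\max(|x|,|y|)\ge R\}$ and applying the radial bound to the outer variable,
\[
  \iint_{(B_R\times B_R)^c}\frac{|u(x)|^2|u(y)|^2}{|x-y|^{\gamma}}\,dx\,dy\ \lesssim_u\ \sup_{|x|\ge R}\int\frac{|u(y)|^2}{|x-y|^{\gamma}}\,dy\ \lesssim_u\ R^{1-\gamma}+R^{-\gamma}\ \lesssim_u\ R^{-1}
\]
since $\gamma>2$. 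Hence $|\mathcal E_R(t)|+(\text{discrepancy})\lesssim_u R^{-1}$ uniformly in $t$, so $\tfrac{d}{dt}M_R(t)\gtrsim_\delta P(\chi_R u(t))-C_u R^{-1}$. Integrating on $[0,T]$ and using the a priori bound on $M_R$,
\[
  \int_0^T P(\chi_R u(t))\,dt\ \lesssim_\delta\ |M_R(T)|+|M_R(0)|+\frac{C_u T}{R}\ \lesssim_{\delta,u}\ R+\frac{T}{R},
\]
and dividing by $T$ gives \eqref{Me} (first fix $R$ according to the prescribed accuracy, then take $T$ large).

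\emph{Main difficulty.} The delicate point is the nonlocal term $\mathcal N_R$: because $|x-y|^{-\gamma}$ is long range, one must control the interaction of the undecaying mass near the origin with the mass at $|x|\gtrsim R$, which is exactly where radiality (the radial Sobolev decay) and the lower bound $\gamma>2$ are used; for the local NLS the analogous error is a single integral over an annulus and is immediate. The structural restriction \eqref{important} is not itself needed here, but for the preceding local well‑posedness/Sobolev‑equivalence theory that allows us to treat $\|\sqrt{\mathcal{L}_a}\cdot\|_{L^2}\sim\|\cdot\|_{\dot H^1}$ and run the variational argument above.
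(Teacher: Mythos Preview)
Your overall architecture matches the paper's: the same localized virial functional with a radial weight equal to $|x|^2$ on $B_R$ and growing linearly beyond, the same ``coercivity on balls'' coming from the subthreshold hypothesis and the sharp Gagliardo--Nirenberg inequality for $\mathcal{L}_a$, and the same integration-in-time to reach \eqref{Me}. The treatment of the kinetic, biharmonic, and inverse-square tail terms is also essentially the paper's.

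There is, however, a genuine gap in your handling of the nonlocal tail. The displayed chain
\[
\iint_{(B_R\times B_R)^c}\frac{|u(x)|^2|u(y)|^2}{|x-y|^{\gamma}}\,dx\,dy\ \lesssim_u\ \sup_{|x|\ge R}\int\frac{|u(y)|^2}{|x-y|^{\gamma}}\,dy\ \lesssim_u\ R^{1-\gamma}+R^{-\gamma}
\]
fails at the second inequality: for radial $u\in H^1$ the convolution $(|\cdot|^{-\gamma}*|u|^2)(x)$ need \emph{not} decay as $|x|\to\infty$ when $\gamma>2$. Indeed, splitting the $y$-integral into $|x-y|\le1$ and $|x-y|\ge1$, the second piece is bounded only by $\|u\|_{L^2}^2$, with no $R$-gain (think of $u$ carrying $L^2$ mass near the sphere $|y|\sim|x|$). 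Equivalently, Hardy--Littlewood--Sobolev does not give an $L^\infty$ bound on $|\cdot|^{-\gamma}*|u|^2$ from $u\in H^1$ once $\gamma>2$. So the asserted $R^{1-\gamma}$ bound is not available, and as written the error term does not close.

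The paper repairs this exactly where you invoke ``the radial bound to the outer variable'': rather than killing the full factor $|u(x)|^2$ (which then leaves a divergent integral), one peels off only a \emph{fraction} $|u(x)|^\alpha$ with $0<\alpha<2$ via the radial Sobolev embedding $\||x|^{s}u\|_{L^\infty}\lesssim\|u\|_{H^1}$, gaining a factor $R^{-s\alpha}$, and keeps $|u(x)|^{2-\alpha}$ inside the integral. One then applies H\"older and Hardy--Littlewood--Sobolev,
\[
\int_{|x|\ge R}|u(x)|^{2-\alpha}\Bigl(\int\frac{|u(y)|^2}{|x-y|^\gamma}\,dy\Bigr)dx
\ \lesssim\ \|u\|_{L^{(2-\alpha)p}}^{\,2-\alpha}\,\|u\|_{L^{2q}}^{2},\qquad \tfrac1p+\tfrac1q+\tfrac{\gamma}{d}=2,
\]
with $p,q$ chosen so that the Lebesgue exponents land in $[2,\tfrac{2d}{d-2}]$; this yields $\lesssim_u R^{-c}$ for some $c>0$ (the paper's treatment of $\mathcal A'$, $\mathcal A''$, $\mathcal D$, $\mathcal F$). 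With that correction your argument goes through and coincides with the paper's.
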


Secondly, we establish a scattering criterion using the method from \cite{Dodson2016}.

\begin{proposition}[Scattering criterion]\label{scat}
 Let $ u, E $ be as in \eqref{ass}, and suppose \eqref{ass} holds.
For $ \epsilon > 0 $ be as in Theorem \ref{decay}, there exist $ R = R (E) > 0 $ such that if
\begin{equation}
\liminf_{t \to + \infty} \int_{\vert x \vert \leqslant R} \vert u(t, x) \vert^2 \mathrm{d}x \leqslant \epsilon^2,
\label{L2dl}
\end{equation}
then $ u $ scatters forward in time.
\end{proposition}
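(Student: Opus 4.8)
The plan is to follow the now-standard Dodson--Murphy scheme, adapted to the nonlocal Hartree nonlinearity and the inverse-square potential. The overall strategy is to show that along a sequence of times $t_n\to\infty$ on which the local mass is small, the nonlinear evolution $u(t_n)$ is asymptotically close (in a suitable Strichartz-type sense) to a free evolution, and then to run the standard perturbative/bootstrap argument to upgrade this to global Strichartz control on $[t_n,\infty)$, which gives scattering by the usual $H^1$ scattering criterion. Concretely, I would fix $\eps>0$ as in Proposition \ref{decay} (and hence a pair $T,R$), choose $n$ large so that $\int_{|x|\le R}|u(t_n,x)|^2\,dx\le 2\eps^2$ by \eqref{L2dl}, and write Duhamel's formula at $t_n$: for $t\ge t_n$,
\[
u(t)=e^{-i(t-t_n)\L}u(t_n)-i\int_{t_n}^t e^{-i(t-s)\L}\big[(|\cdot|^{-\gamma}\ast|u|^2)u\big](s)\,ds.
\]
The goal is to show that the inhomogeneous term, split as an integral over $[t_n-T,t_n]$ plus one over $(-\infty,t_n-T]$ (after re-expanding $u(t_n)$ once more via Duhamel from $t_n-T$), is small in the relevant dual Strichartz norm on $[t_n,\infty)$. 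I would use the equivalence of Sobolev spaces adapted to $\L$ (from Theorem \ref{T:LWP} and the hypotheses \eqref{important}), the fractional Leibniz / product estimates for the Hartree term using that $|\cdot|^{-\gamma}\ast(\cdot)$ maps $L^p\to L^q$ by Hardy--Littlewood--Sobolev, and the dispersive estimate for $e^{-it\L}$ acting on radial functions from \cite{zheng} to handle the $a<0$ case where the kernel bound is weaker than the free one.

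The two contributions are controlled by different mechanisms. For the ``near'' piece, the integral over $[t_n-T,t_n]$, I would use the improved Morawetz bound \eqref{Me}: after inserting the spatial cutoff $\chi_R$, the quantity $\frac1T\int_{t_n-T}^{t_n}P(\chi_R u)\,dt$ is $\lesssim \frac1T+\frac1R$, and combined with the small-local-mass hypothesis and interpolation with the uniform $H^1$ bound \eqref{ass}, this makes the nonlinear term over this window small in, say, an $L^2_{t,x}$-type or Strichartz-dual norm; the region $|x|>R$ is handled by the radial Sobolev embedding, which gives pointwise decay $|u(t,x)|\lesssim |x|^{-(d-1)/2}\|u\|^{1/2}_{L^2}\|u\|^{1/2}_{\dot H^1}$, making $P(\chi_{>R}u)$ and the associated nonlinear contribution small for $R$ large. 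For the ``far'' piece, the integral over $(-\infty, t_n-T]$, I would estimate $\|e^{-i(t-s)\L}F(s)\|$ using the dispersive decay in time (the elapsed time $t-s\ge T$ is large), trading integrability in $s$ against the $t^{-\sigma}$ decay of the kernel, with $\sigma$ large enough (this is where $2<\gamma$ and the dimension restrictions enter) to make the tail contribution $\lesssim T^{-\delta}$, hence small.

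Once the inhomogeneous term at $t_n$ is small, the small-data local theory / stability theory for \eqref{problem-eq: NLS-H} (which follows from the local well-posedness framework of Theorem \ref{T:LWP} together with the coercivity/uniform bound \eqref{ass}) yields that $u\in S^1([t_n,\infty))$, i.e. $u$ has finite global Strichartz norm going forward. This in turn implies, by a standard argument using Duhamel and the dual Strichartz estimate, that $e^{it\L}u(t)$ is Cauchy in $H^1_x$ as $t\to+\infty$, so $u$ scatters forward in time. The main obstacle I anticipate is the nonlocal term: one must choose Strichartz exponents compatible simultaneously with (i) the Hardy--Littlewood--Sobolev mapping properties of $|\cdot|^{-\gamma}\ast$, (ii) the equivalence of $\L$-adapted and standard Sobolev spaces, which is only available in the parameter range \eqref{important}, and (iii) the weaker radial dispersive estimate of \cite{zheng} when $a<0$ — keeping all three consistent, especially through the fractional-derivative bookkeeping on the cubic-type Hartree nonlinearity, is the delicate point, whereas the Morawetz and radial-Sobolev inputs are essentially plug-in.
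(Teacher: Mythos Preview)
Your overall Dodson--Murphy framework is right, and your treatment of the ``far'' Duhamel piece via the radial dispersive estimate of \cite{zheng} matches the paper's argument (the term $F_1$ in Step two). However, your handling of the ``near'' piece contains a genuine confusion about the role of the Morawetz estimate.

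The improved Morawetz bound \eqref{Me} is \emph{not} used in the proof of Proposition~\ref{scat}; it is used elsewhere, to verify that the hypothesis \eqref{L2dl} holds in the first place. Inside the criterion you are \emph{given} local-mass smallness at a single time, and the mechanism for the near piece is propagation of that smallness, not Morawetz. Concretely (Steps three and four of the paper), one uses
\[
\Bigl|\partial_t\int_{\mathbb{R}^d}\varphi_R^2|u|^2\,dx\Bigr|\lesssim R^{-1}\|u\|_{H^1}^2
\]
to keep $\int_{|x|\le R}|u|^2$ small on a window $[T-l,T]$ of length $l=\eps^{-1/4}$, provided $R$ is chosen large depending on $\eps$. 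Interpolating with the uniform $H^1$ bound and invoking radial Sobolev on $\{|x|>R/2\}$ yields the $L^\infty_t$ smallness $\|u\|_{L^\infty_tL^{2d/(d+2-\gamma)}_x([T-l,T])}\lesssim\eps^{1/2}$. This is the small factor that feeds into the estimate $\int_{T-l}^T\|f(u)\|_{H^1}\,ds\lesssim\eps^{1/2}X^2$, where $X$ is an $L^2_tW^{1,2d/(d-2)}_x$-type Strichartz norm on the window, itself bounded by a continuity argument giving $X\lesssim\langle l\rangle^{1/2}$, so the near contribution is $\lesssim\eps^{1/4}$.

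Your proposed substitute, the time-averaged bound $\tfrac1T\int P(\chi_Ru)\,dt\lesssim\tfrac1T+\tfrac1R$, does not close this loop: it gives $L^1_t$ control of the potential energy, not the $L^\infty_t$ Lebesgue smallness of $u$ that the H\"older/bootstrap argument requires, and it supplies no small factor in front of the Strichartz quantity $X$ on the window. Without the mass-propagation step and the accompanying continuity argument for $X$, the near-piece estimate does not go through. Replace the Morawetz input by the argument above and the rest of your outline becomes essentially the paper's proof.
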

Combing the two steps, we can easily obtain the desired scattering result.

\section{Preliminaries}
We mark $ A \lesssim B $ to mean there exists a constant $ C > 0 $ such that $ A \leqslant CB $. We indicate dependence on parameters via subscripts, e.g. $ A \lesssim_{u} B $ indicates $ A \leqslant CB $ for some $ C = C(u) > 0 $. We write $ L_{t}^{q}L_{x}^{r} $ to denote the Banach space with norm
\begin{equation*}
\Vert u \Vert_{L_{t}^{q}L_{x}^{r}(\mathbb{R} \times \mathbb{R}^d)} := \left( \int_{\mathbb{R}} \left( \int_{\mathbb{R}^d} \vert u(t, x) \vert^{r} \mathrm{d}x \right) ^{\tfrac{q}{r}} \mathrm{d}t \right)^{\tfrac{1}{q}},
\end{equation*}
with the usual modifications when $ q $ or $ r $ are equal to infinity, or when the domain $ \mathbb{R} \times \mathbb{R}^d $ is replaced by space-time slab such as $ I \times \mathbb{R}^d $. We use $ (q, r) \in \Lambda_{s} $ to denote $ q \geqslant 2 $ and the pair satisfying
\begin{equation*}
\tfrac{2}{q} = d (\tfrac{1}{2} - \tfrac{1}{r}) - s.
\end{equation*}
Define that
\[
  \begin{aligned}
    \Vert f \Vert_{\dot{H}_{a}^{s_{c},r}(\mathbb{R}^{d})} =& ~ \Vert (\sqrt{\mathcal{L}_{a}})^{s_{c}}f \Vert_{L^{r}(\mathbb{R}^{d})} \\
    \Vert f \Vert_{H_{a}^{s_{c},r}(\mathbb{R}^{d})} =& ~ \Vert (\sqrt{1+\mathcal{L}_{a}})^{s_{c}}f \Vert_{L^{r}(\mathbb{R}^{d})}.
  \end{aligned}
\]
\subsection{Some useful inequalities}\label{ineq}

In this subsection, we show some important inequalities which are will be used frequently in the following sections.
\begin{lemma}[Riesz Rearrangement Inequality, \cite{LL}]\label{riesz}
We denote that $\;f^{\ast}$ is the radial non-increase symmetrical rearrangement of the function $f$,  that is to say,   denote $f^{\ast}$ as the rearrangement of $f$.  Then we have
\begin{equation*}
      \left|\iint_{\mathbb R^{d}\times\mathbb R^{d}} f(x)g(y)h(x-y)dxdy\right|
  \le
      \large\left| \iint_{\mathbb R^{d}\times\mathbb R^{d}} f^{\ast}(x)g^{\ast}(y)h^{\ast}(x-y)dxdy \large\right|
\end{equation*}
\end{lemma}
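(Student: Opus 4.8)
The plan is to follow the classical route via the layer-cake (bathtub) representation combined with Steiner symmetrization, which is exactly the argument behind Theorem 3.7 of \cite{LL}. First I would reduce to characteristic functions. Since $|f(x)g(y)h(x-y)| = |f(x)|\,|g(y)|\,|h(x-y)|$ and $(|f|)^{\ast}=f^{\ast}$, we may assume $f,g,h\ge 0$ and drop the absolute values. Writing $f(x)=\int_0^{\infty}\mathbf{1}_{\{f>s\}}(x)\,ds$ and similarly for $g,h$, Tonelli's theorem gives
\[
\iint f(x)g(y)h(x-y)\,dx\,dy=\int_0^{\infty}\!\!\int_0^{\infty}\!\!\int_0^{\infty} I\big(\{f>s\},\{g>t\},\{h>u\}\big)\,ds\,dt\,du,
\]
where $I(A,B,C):=\iint \mathbf{1}_A(x)\mathbf{1}_B(y)\mathbf{1}_C(x-y)\,dx\,dy$. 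Because super-level sets rearrange to centered balls, $(\{f>s\})^{\ast}=\{f^{\ast}>s\}$ and $(\mathbf{1}_E)^{\ast}=\mathbf{1}_{E^{\ast}}$, so it suffices to prove $I(A,B,C)\le I(A^{\ast},B^{\ast},C^{\ast})$ for measurable sets $A,B,C\subset\mathbb{R}^{d}$ with level sets of finite measure, where $A^{\ast}$ is the centered ball of the same measure; the general inequality then follows by integrating back in $(s,t,u)$.

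Next I would settle the one-dimensional base case: for $A,B,C\subset\mathbb{R}$ of finite measure one has $I(A,B,C)\le I(A^{\ast},B^{\ast},C^{\ast})$ with $A^{\ast}=(-|A|/2,|A|/2)$, and likewise for $B,C$. Approximating by finite unions of intervals and using $I(A,B,C)=\int_{\mathbb{R}}(\mathbf{1}_A\ast\widetilde{\mathbf{1}_B})(z)\,\mathbf{1}_C(z)\,dz$ with $\widetilde h(z)=h(-z)$, this comes down to a ``sliding intervals'' argument: once $C$ is a centered interval, translating the component intervals of $A$ and of $B$ toward the origin can only increase the overlap integrated against $\mathbf{1}_C$. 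This is the genuinely combinatorial core; alternatively it can be phrased through the Brunn--Minkowski inequality on the line.

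To pass to $d$ dimensions I would use Steiner symmetrization. For a unit vector $e$, decompose $x=(x',x_e)$ with $x'\in e^{\perp}$; Fubini gives
\[
I(A,B,C)=\int_{e^{\perp}}\!\int_{e^{\perp}}\Big[\int_{\mathbb{R}}\!\int_{\mathbb{R}}\mathbf{1}_{A_{x'}}(x_e)\mathbf{1}_{B_{y'}}(y_e)\mathbf{1}_{C_{x'-y'}}(x_e-y_e)\,dx_e\,dy_e\Big]\,dx'\,dy',
\]
and the inner bracket is the one-dimensional functional applied to the slices, hence bounded by the same expression with each slice replaced by the centered interval of equal length, i.e. by the slices of the Steiner symmetrizations $S_eA,S_eB,S_eC$. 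Therefore $I(A,B,C)\le I(S_eA,S_eB,S_eC)$. Finally, choosing a suitable sequence of directions $e_k$ (after first truncating $A,B,C$ to bounded sets), the iterated symmetrizations $S_{e_k}\cdots S_{e_1}A$ converge in measure on any fixed ball to $A^{\ast}$; since the integrand defining $I$ is bounded and supported in a fixed bounded set, dominated convergence yields $I(A,B,C)\le I(A^{\ast},B^{\ast},C^{\ast})$, and undoing the truncation completes the proof.

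The hard part is concentrated in the geometric, measure-theoretic part rather than in the soft reductions: (i) proving the one-dimensional base case cleanly, so that the overlap is genuinely monotone under the sliding moves and the finite-interval approximation is controlled; and (ii) justifying that iterated Steiner symmetrizations along a well-chosen sequence of directions actually converge to the ball, with $I$ continuous along that sequence. Both are standard and are precisely what \cite{LL} supplies, so in the paper one may simply invoke that reference; the sketch above records why the statement holds.
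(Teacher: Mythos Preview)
Your proposal is correct and is precisely the classical layer-cake plus Steiner-symmetrization argument of Theorem~3.7 in \cite{LL}. Note that the paper itself provides no proof of this lemma at all; it is merely stated and attributed to \cite{LL}, so your sketch in fact goes beyond what the paper does while faithfully reproducing the approach of the cited reference.
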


\begin{lemma}[Radial Sobolev Embedding, \cite{Tao}]\label{rad Sobole}
Let $ d \geq 3 $. For radial function $ f \in H^1(\mathbb{R}^d) $, there holds
\begin{align*}
\left\Vert \vert x \vert^{s} f \right\Vert_{L_x^{\infty}(\mathbb{R}^d)} \lesssim \Vert f \Vert_{H^1(\mathbb{R}^d)},
\end{align*}
where $ \tfrac{d}{2} - 1 \leq s \leq \tfrac{d-1}{2} $.
\end{lemma}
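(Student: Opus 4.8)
The plan is to reduce by density to radial functions in $C_c^\infty(\R^d)$, prove the two endpoint cases $s=\tfrac d2-1$ and $s=\tfrac{d-1}2$ by the fundamental theorem of calculus in the radial variable together with Cauchy--Schwarz, and then obtain the intermediate range by a trivial interpolation that splits $\R^d$ into $\{|x|\le 1\}$ and $\{|x|\ge 1\}$. Since radial functions in $C_c^\infty(\R^d)$ are dense in the radial subspace of $H^1(\R^d)$, it suffices to prove the bound for such $f$: applying the estimate to differences $f_n-f_m$ shows $|x|^s f_n$ is Cauchy in $L^\infty$, while an a.e.-convergent subsequence of $f_n\to f$ in $L^2$ identifies the limit with $|x|^s f$, so the bound passes to the limit.

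So let $f\in C_c^\infty(\R^d)$ be radial with profile $f(r)$, so that $f(r)=-\int_r^\infty \partial_\rho f(\rho)\,d\rho$. For the lower endpoint $s=\tfrac d2-1$, Cauchy--Schwarz and $d\ge 3$ give
\[
|f(r)|\le \int_r^\infty |\partial_\rho f(\rho)|\,d\rho\le \Big(\int_r^\infty |\partial_\rho f|^2\rho^{d-1}\,d\rho\Big)^{1/2}\Big(\int_r^\infty \rho^{1-d}\,d\rho\Big)^{1/2}\lesssim r^{1-\frac d2}\,\|\nabla f\|_{L^2(\R^d)},
\]
using that $d>2$ makes $\int_r^\infty \rho^{1-d}\,d\rho$ a constant multiple of $r^{2-d}$; hence $\big\||x|^{\frac d2-1}f\big\|_{L^\infty}\lesssim \|\nabla f\|_{L^2}\le\|f\|_{H^1}$. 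For the upper endpoint $s=\tfrac{d-1}2$, write $|f(r)|^2=-2\int_r^\infty \Re\big(\bar f\,\partial_\rho f\big)\,d\rho$, multiply by $r^{d-1}$ and use $r^{d-1}\le\rho^{d-1}$ for $\rho\ge r$:
\[
r^{d-1}|f(r)|^2\le 2\int_r^\infty \rho^{d-1}|f|\,|\partial_\rho f|\,d\rho\le 2\Big(\int_0^\infty \rho^{d-1}|f|^2\,d\rho\Big)^{1/2}\Big(\int_0^\infty \rho^{d-1}|\partial_\rho f|^2\,d\rho\Big)^{1/2}\lesssim \|f\|_{L^2}\|\nabla f\|_{L^2},
\]
so $\big\||x|^{\frac{d-1}2}f\big\|_{L^\infty}\lesssim \|f\|_{H^1}$.

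Finally, for $\tfrac d2-1\le s\le \tfrac{d-1}2$ I would interpolate pointwise: on $\{|x|\le 1\}$ write $|x|^s|f(x)|=|x|^{s-(\frac d2-1)}\big(|x|^{\frac d2-1}|f(x)|\big)\lesssim\|f\|_{H^1}$, since $s-(\tfrac d2-1)\ge 0$ forces $|x|^{s-(\frac d2-1)}\le 1$; and on $\{|x|\ge 1\}$ write $|x|^s|f(x)|=|x|^{s-\frac{d-1}2}\big(|x|^{\frac{d-1}2}|f(x)|\big)\lesssim\|f\|_{H^1}$, since $s-\tfrac{d-1}2\le 0$ forces $|x|^{s-\frac{d-1}2}\le 1$. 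This is the classical Strauss/Ni radial embedding, so I do not expect a genuine obstacle; the only points that need a little care are the use of $d\ge 3$ at the lower endpoint (so $\int_r^\infty\rho^{1-d}\,d\rho$ converges), the decay $f(r)\to 0$ as $r\to\infty$ (automatic on the dense class $C_c^\infty$), and the approximation argument used to remove smoothness — the last being the only mildly delicate step.
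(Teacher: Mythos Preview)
Your proof is correct. The paper does not give its own proof of this lemma; it is simply quoted from \cite{Tao} as a known result, so there is nothing to compare against. What you have written is precisely the classical Strauss/Ni argument for the radial embedding, and all the steps are sound: the two endpoint estimates via the fundamental theorem of calculus and Cauchy--Schwarz are standard, the pointwise interpolation between them via the $\{|x|\le 1\}$/$\{|x|\ge 1\}$ split is the natural way to get the full range, and your remarks about the role of $d\ge 3$ and the density argument are accurate.
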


\begin{lemma}[Hardy-Littlewood-Sobolev Inequality, \cite{LL}]
If $ 1 < p,q < \infty $,  $ 0 < \alpha < d $ and $ \frac{1}{p} + \frac{1}{q} + \frac{\alpha}{d} = 2 $, we have
\begin{equation}\label{Hardy-Littlewood-Sobolev }
    \left|
        \iint_{\mathbb R^{d}\times \mathbb R^{d}}\frac{f(x)g(y)}{|x-y|^{\alpha}} dxdy
    \right|
        \lesssim \|f\|_{L^{p}(\mathbb R^{d})}\|g\|_{L^{q}(\mathbb R^{d})}
\end{equation}
\end{lemma}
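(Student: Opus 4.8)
The plan is to reduce the inequality, by the Riesz rearrangement inequality (Lemma~\ref{riesz}), to symmetric--decreasing functions and then to prove it by a layer--cake decomposition. First, bounding $\bigl|\iint fg\,|x-y|^{-\alpha}\,dx\,dy\bigr|\le\iint |f|\,|g|\,|x-y|^{-\alpha}\,dx\,dy$, I may assume $f,g\ge 0$; since the kernel $|x|^{-\alpha}$ is radial and strictly decreasing it equals its own symmetric--decreasing rearrangement, so Lemma~\ref{riesz} yields
\[
\iint_{\R^d\times\R^d}\frac{f(x)g(y)}{|x-y|^{\alpha}}\,dx\,dy
\;\le\;
\iint_{\R^d\times\R^d}\frac{f^{\ast}(x)\,g^{\ast}(y)}{|x-y|^{\alpha}}\,dx\,dy ,
\]
and since $\|f^{\ast}\|_{L^p}=\|f\|_{L^p}$ and $\|g^{\ast}\|_{L^q}=\|g\|_{L^q}$ it suffices to treat nonnegative radial non-increasing $f,g$. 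For such $f,g$ the super-level sets $\{f>s\}$, $\{g>t\}$ are balls about the origin, and inserting $f=\int_0^\infty\mathbf{1}_{\{f>s\}}\,ds$, $g=\int_0^\infty\mathbf{1}_{\{g>t\}}\,dt$, $|z|^{-\alpha}=\alpha\int_0^\infty\mathbf{1}_{\{|z|<r\}}\,r^{-\alpha-1}\,dr$ and using Fubini rewrites the right-hand side as
\[
\alpha\int_0^\infty\!\!\int_0^\infty\!\!\int_0^\infty\Bigl(\iint_{\R^d\times\R^d}\mathbf{1}_{\{f>s\}}(x)\,\mathbf{1}_{\{g>t\}}(y)\,\mathbf{1}_{\{|z|<r\}}(x-y)\,dx\,dy\Bigr)\,r^{-\alpha-1}\,dr\,ds\,dt .
\]

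The one genuinely geometric input is the elementary estimate that the inner double integral of the three ball-indicators is at most the product of the two smaller of the three volumes $|\{f>s\}|$, $|\{g>t\}|$, $|B_r|$ (discard whichever indicator corresponds to the largest set). Plugging this in, one computes the $r$-integral explicitly --- it converges at both $0$ and $\infty$ exactly because $0<\alpha<d$ --- leaving a double integral in $(s,t)$ of the shape $\int_0^\infty\!\int_0^\infty\min\bigl(|\{f>s\}|,|\{g>t\}|\bigr)\cdot\max\bigl(|\{f>s\}|,|\{g>t\}|\bigr)^{(d-\alpha)/d}\,ds\,dt$. I would bound this by splitting into the two regions according to which of $|\{f>s\}|$, $|\{g>t\}|$ is larger, applying H\"older's inequality, and invoking the distribution-function identity $\|f\|_{L^p}^p=p\int_0^\infty s^{p-1}|\{f>s\}|\,ds$ together with its analogue for $g$; the exponents then close up to $\|f\|_{L^p}\|g\|_{L^q}$, with constant depending only on $d,p,\alpha$, and it is precisely the scaling relation $\tfrac1p+\tfrac1q+\tfrac{\alpha}{d}=2$ that makes every integral converge. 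This last estimate --- and checking all the convergence --- is the main, if routine, obstacle.

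A shorter alternative avoids rearrangement altogether: by Fubini the claim is equivalent to the boundedness $\|T_\alpha g\|_{L^{p'}}\lesssim\|g\|_{L^q}$ of the fractional integral $T_\alpha g:=|\cdot|^{-\alpha}\ast g$, where $\tfrac1{p'}=\tfrac1q+\tfrac{\alpha}{d}-1\in(0,1)$. To prove this I would split the kernel at radius $R$: Young's inequality controls the part supported in $\{|z|<R\}$ (which lies in $L^1$ with norm $\sim R^{d-\alpha}$, as $\alpha<d$) in $L^q$ by $R^{d-\alpha}\|g\|_{L^q}$, while H\"older controls the tail in $\{|z|\ge R\}$ (which lies in $L^{q'}$ with norm $\sim R^{d/q'-\alpha}$, finite since $\tfrac1{p'}>0$ forces $\alpha q'>d$) in $L^\infty$ by $R^{d/q'-\alpha}\|g\|_{L^q}$; optimising $R$ at each height $\lambda$ gives the weak-type bound $\bigl|\{|T_\alpha g|>\lambda\}\bigr|\lesssim(\|g\|_{L^q}/\lambda)^{p'}$. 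One then upgrades weak type to strong type by the Marcinkiewicz interpolation theorem applied between two admissible exponent pairs straddling $q$; the only delicate point there is that, since $\tfrac1{p'}\in(0,1)$, no admissible pair sits at an $L^1$ or $L^\infty$ endpoint, so the interpolation is carried out entirely between interior exponents.
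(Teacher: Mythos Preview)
The paper does not actually prove this lemma: it is stated in the preliminary subsection on useful inequalities and simply cited from Lieb--Loss \cite{LL}, with no argument given. So there is nothing in the paper to compare your proof against directly.

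That said, your first approach---reduce to symmetric-decreasing functions via Lemma~\ref{riesz}, layer-cake decompose, and bound the triple convolution of ball indicators by the product of the two smaller volumes---is precisely the proof given in the reference \cite{LL} that the paper is invoking, so in spirit you are reproducing exactly what the authors had in mind. Your sketch is correct, though the closing H\"older step on the $(s,t)$-integral is left quite vague; in Lieb--Loss this is handled by a careful but elementary computation, and you would need to write it out to make the proof complete.

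Your second approach (duality to the fractional integral operator, kernel splitting to get a weak-type bound, then Marcinkiewicz interpolation between two interior exponent pairs) is also correct and is the other standard route to HLS. It has the advantage of being shorter and not relying on rearrangement, at the cost of importing Marcinkiewicz. One small point: your condition for the tail $|z|^{-\alpha}\mathbf{1}_{\{|z|\ge R\}}\in L^{q'}$ is $\alpha q'>d$, which you correctly note follows from $\tfrac{1}{p'}>0$; just be sure when you interpolate that both auxiliary exponent pairs also satisfy the admissibility constraints $1<q<\infty$ and $0<\tfrac{1}{p'}<1$, which is routine since the original pair lies strictly in the interior.
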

\subsection{Harmonic analysis adapted to $ \La $} In this subsection, we describe some harmonic analysis tools adapted to the operator $ \La $. The primary reference for this section is \cite{11}.

Recall that by the sharp Hardy inequality, one has
\begin{equation}\label{iso}
\|\sqrt{\La}\, f\|_{L_x^2}^2 \sim \|\nabla f\|_{L_x^2}^2 ~ \textrm{for} ~ a > - (\tfrac{d-2}{2})^{2}.
\end{equation}
Thus, the operator $ \La $ is positive for $ a > - (\tfrac{d-2}{2})^{2} $. To state the estimates below, it is useful to introduce the parameter
\begin{equation}\label{rho}
\rho:=\tfrac{d-2}2-\bigr[\bigl(\tfrac{d-2}2\bigr)^2+a\bigr]^{\frac12}.
\end{equation}

We give the following result concerning equivalence of Sobolev spaces was established in \cite{11}; it plays an important role throughout this paper.

\begin{theorem}[Equivalence of Sobolev spaces,\cite{11}]\label{equivalence}
Fix $ d \ge 3 $, $ a\ge-(\frac{d-2}{2})^{2} $, and $ 0<s<2 $. If $ 1<p<\infty $ satisfies $ \frac{s+\sigma}{d}<\frac{1}{p}<min\{1,\frac{d-\sigma}{d}\} $, then
\begin{equation}
||(-\Delta)^{\frac{s}{2}}f||_{L^{p}}\le C(d,p,s)||\mathcal{L}_{a}^{\frac{s}{2}}f||_{L^{p}}\ \ \ \text{for all}\ f\in C_{c}^{\infty}(\mathbb{R}^{d}\setminus\{0\}).
\end{equation}
If $max\{\frac{s}{d},\frac{\sigma}{d}\}<\frac{1}{p}<min\{1,\frac{d-\sigma}{d}\}$,which ensures already that $1<p<\infty$,then
\begin{equation}
||\mathcal{L}_{a}^{\frac{s}{2}}f||_{L^{p}}\le C(d,p,s)||(-\Delta)^{\frac{s}{2}}f||_{L^{p}}\ \ \ \text{for all}\ f\in C_{c}^{\infty}(\mathbb{R}^{d}\setminus\{0\}).
\end{equation}
\end{theorem}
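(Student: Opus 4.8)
The plan is to recast each of the two inequalities as the $L^p$-boundedness of an explicit operator built from the heat semigroup $e^{-t\La}$, and to analyse that operator through kernel bounds. By density of $C_c^\infty(\R^d\setminus\{0\})$ in the spaces concerned, and since $(-\Delta)^{\pm s/2}$ and $\La^{\pm s/2}$ form inverse pairs on this class, the first inequality is equivalent to the boundedness on $L^p(\R^d)$ of $T_1:=(-\Delta)^{s/2}\La^{-s/2}$ in the stated range $\tfrac{s+\sigma}{d}<\tfrac1p<\min\{1,\tfrac{d-\sigma}{d}\}$, and the second to the boundedness of $T_2:=\La^{s/2}(-\Delta)^{-s/2}$ in the range $\max\{\tfrac sd,\tfrac\sigma d\}<\tfrac1p<\min\{1,\tfrac{d-\sigma}{d}\}$; here $\sigma$ is the parameter $\rho$ of \eqref{rho}, so $\sigma>0$ exactly when $a<0$ and $\sigma<0$ exactly when $a>0$. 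I would treat $T_1$ in detail, the treatment of $T_2$ being symmetric with the roles of $\La$ and $-\Delta$ interchanged.

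Since $0<\tfrac s2<1$, the subordination formula gives, on $C_c^\infty(\R^d\setminus\{0\})$,
\[
\La^{-s/2}=\tfrac1{\Gamma(s/2)}\int_0^\infty t^{s/2}\,e^{-t\La}\,\tfrac{dt}{t},
\]
so $T_1 g=\tfrac1{\Gamma(s/2)}\int_0^\infty t^{s/2}\,(-\Delta)^{s/2}e^{-t\La}g\,\tfrac{dt}{t}$. I would split $e^{-t\La}=e^{t\Delta}+(e^{-t\La}-e^{t\Delta})$. The $e^{t\Delta}$-piece reassembles, via the analogous subordination formula for $-\Delta$, to $(-\Delta)^{s/2}(-\Delta)^{-s/2}g=g$, hence is bounded on every $L^p$, $1<p<\infty$. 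For the difference I would use Duhamel's formula
\[
e^{-t\La}-e^{t\Delta}=-\int_0^t e^{(t-\tau)\Delta}\Bigl(\tfrac a{|x|^2}\Bigr)e^{-\tau\La}\,d\tau,
\]
which places the singular potential where it is acted on by the smoothing of $e^{(t-\tau)\Delta}$ from the left and by the heat flow of $\La$ from the right, both operators having controllable kernels.

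The analytic heart of the matter is then a kernel estimate. I would invoke the Gaussian heat-kernel bound for $\La$ from \cite{11}: the kernel $p^a_t(x,y)$ of $e^{-t\La}$ satisfies
\[
0\le p^a_t(x,y)\lesssim\bigl(1\vee\tfrac{\sqrt t}{|x|}\bigr)^{\sigma}\bigl(1\vee\tfrac{\sqrt t}{|y|}\bigr)^{\sigma}\,t^{-d/2}\,e^{-|x-y|^2/(ct)},
\]
together with the elementary bound $(\tau')^{-(d+s)/2}\bigl(1+|x-z|/\sqrt{\tau'}\bigr)^{-d-1}$ for the kernel of $(-\Delta)^{s/2}e^{\tau'\Delta}$. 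Composing these, applying a Schur test in the intermediate variable, then integrating in $\tau\in(0,t)$ and finally against $t^{s/2}\tfrac{dt}{t}$, one obtains $\|T_1 g\|_p\lesssim\|g\|_p$. The admissible range of $p$ is exactly what makes these weighted integrals converge: the local behaviour near the origin of the weight $|x|^{-2}\bigl(1\vee\tfrac{\sqrt\tau}{|x|}\bigr)^{\sigma}$, together with the boundary weight picked up by $\La^{-s/2}$, forces $\tfrac1p>\tfrac{s+\sigma}{d}$, while the behaviour at spatial infinity (equivalently, the dual bound) forces $\tfrac1p<\tfrac{d-\sigma}{d}$. The cases $a>0$ (so $\sigma<0$, the correction kernel vanishing at the origin) and $a<0$ (so $\sigma>0$, the correction kernel genuinely singular there) are run separately.

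The reverse inequality, i.e. the boundedness of $T_2$, I would obtain in the same spirit, writing $\La^{s/2}=c\int_0^\infty\tau^{-s/2}\bigl(I-e^{-\tau\La}\bigr)\tfrac{d\tau}{\tau}$, representing $(-\Delta)^{-s/2}$ as the Riesz potential, comparing $\La^{s/2}f$ with $(-\Delta)^{s/2}f$ through the analogous Duhamel difference, and using the same heat-kernel bound, now with the singular and regular weights exchanged; this is what produces the factor $\max\{s/d,\sigma/d\}$ in the range. The step I expect to be the main obstacle is precisely the bookkeeping in the kernel estimate for $T_1$: carrying the boundary weights $\bigl(\tfrac{\sqrt t}{|x|}\bigr)^{\sigma}$ through the nested integrations and extracting the sharp range of exponents, in particular keeping the two sign regimes of $\sigma$ straight, since the correction operator is $L^p$-bounded only on a genuine subinterval of $(1,\infty)$ when $a\neq 0$. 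A secondary technical point is to justify the subordination and Duhamel identities rigorously on $C_c^\infty(\R^d\setminus\{0\})$, where some care is needed near $\tau=0$ because $\tfrac a{|x|^2}e^{-\tau\La}$ is unbounded on $L^p$ as $\tau\downarrow 0$ prior to integration.
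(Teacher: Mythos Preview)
The paper does not prove this theorem at all: it is stated as a quotation from \cite{11} (Killip--Miao--Visan--Zhang--Zheng), and the authors use it as a black box in the local well-posedness argument. There is therefore no ``paper's own proof'' to compare your proposal against.

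As for the substance of your outline, the strategy of reducing to $L^p$-boundedness of the imaginary Riesz operators $T_1=(-\Delta)^{s/2}\La^{-s/2}$ and $T_2=\La^{s/2}(-\Delta)^{-s/2}$, and controlling the difference $e^{-t\La}-e^{t\Delta}$ via Duhamel and the known Gaussian heat-kernel bound with boundary weights $(1\vee\sqrt{t}/|x|)^{\sigma}$, is broadly in the spirit of \cite{11}. That paper, however, proceeds somewhat differently: it first establishes a multiplier theorem of Mikhlin type for $\La$ (via Littlewood--Paley square functions adapted to the heat semigroup), then bounds the Riesz-type operators by exploiting the Hardy inequality and the structure of $\La$, rather than by the explicit Duhamel comparison you describe. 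Your approach would work in principle, but the nested integration you identify as ``the main obstacle'' is genuinely delicate: the boundary weight $(\sqrt{t}/|x|)^{\sigma}$ does not interact cleanly with the Schur test when $\sigma>0$, and the endpoint range of $p$ requires a sharper tool (weak-type or Hardy-space endpoint bounds plus interpolation) than a direct kernel estimate typically yields. If you want to supply a proof rather than cite one, you would be better served following \cite{11} directly.
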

Next, we recall some fractional calculus estimates for powers of $\La$ due to Christ and Weinstein \cite{12}.
\begin{lemma}[Fractional product rule,\cite{12}]\label{product}
Fix $a>-(\frac{d-2}{2})^{2}+b$.Then for all $f,g\in C^{\infty}_{c}(\mathbb{R}^{d}\setminus\{0\})$ ,there exist $p_{a},p_{b}$ depends only on the range of $a$,then we have
\[
  ||\sqrt{\mathcal{L}_{a}}(fg)||_{L^{p}(\mathbb{R}^{d})}\le C(||\sqrt{\mathcal{L}_{a}}f||_{L^{p_{1}}(\mathbb{R}^{d})}||g||_{L^{p_{2}}(\mathbb{R}^{d})}+||f||_{L^{q_{1}}(\mathbb{R}^{d})}||\sqrt{\mathcal{L}_{a}}g||_{L^{q_{2}}(\mathbb{R}^{5})})
\]
for any exponents satisfying $p_{a}<p,p_{1},p_{2},q_{1},q_{2}<p_{b}$ and $\frac{1}{p}=\frac{1}{p_{1}}+\frac{1}{p_{2}}=\frac{1}{q_{1}}+\frac{1}{q_{2}}$.
\end{lemma}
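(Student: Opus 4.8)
The plan is to deduce this estimate from the flat fractional Leibniz rule by conjugating through the two halves of the Sobolev-space equivalence of Theorem~\ref{equivalence}. The observation that makes this painless is that the class $C_c^\infty(\R^d\setminus\{0\})$ is stable under multiplication, so Theorem~\ref{equivalence}---which is stated precisely for this class---applies simultaneously to $fg$, to $f$, and to $g$; meanwhile the classical Christ--Weinstein (Kato--Ponce) product rule for $|\nabla|=(-\Delta)^{1/2}$ holds on the full Lebesgue range with no exceptional set. We fix $s=1$ throughout, so that $\mathcal{L}_a^{s/2}=\Da$, and we only ever estimate functions in $C_c^\infty(\R^d\setminus\{0\})$, so no density argument is needed.

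Concretely, for $f,g\in C_c^\infty(\R^d\setminus\{0\})$ I would write, schematically,
\begin{align*}
\|\Da(fg)\|_{L^{p}}
 &\le C\,\|(-\Delta)^{1/2}(fg)\|_{L^{p}} \\
 &\le C\bigl(\|(-\Delta)^{1/2}f\|_{L^{p_{1}}}\|g\|_{L^{p_{2}}}+\|f\|_{L^{q_{1}}}\|(-\Delta)^{1/2}g\|_{L^{q_{2}}}\bigr) \\
 &\le C\bigl(\|\Da f\|_{L^{p_{1}}}\|g\|_{L^{p_{2}}}+\|f\|_{L^{q_{1}}}\|\Da g\|_{L^{q_{2}}}\bigr).
\end{align*}
The first line is the second inclusion of Theorem~\ref{equivalence} (with $s=1$) applied to $h=fg\in C_c^\infty(\R^d\setminus\{0\})$; the middle line is the classical fractional product rule for $(-\Delta)^{1/2}$, valid whenever $1<p<\infty$ and the Hölder relations $\tfrac1p=\tfrac1{p_1}+\tfrac1{p_2}=\tfrac1{q_1}+\tfrac1{q_2}$ hold; the last line applies the first inclusion of Theorem~\ref{equivalence} (with $s=1$) to $f$ at exponent $p_1$ and to $g$ at exponent $q_2$. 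Collecting the hypotheses of Theorem~\ref{equivalence} invoked in the first and third lines together with the requirement $1<\cdot<\infty$ from the middle line, all five exponents $p,p_1,p_2,q_1,q_2$ are forced into a single common interval $(p_a,p_b)\subset(1,\infty)$ whose endpoints are read off directly from the admissible ranges in Theorem~\ref{equivalence}; this interval is nonempty precisely because $a$ lies above the sharp Hardy threshold, and the hypothesis $a>-(\tfrac{d-2}{2})^2+b$ is just a quantitative way of recording how much room $b$ this leaves, so that $p_a$ and $p_b$ depend only on $a$ (through $\rho$ of \eqref{rho}). This completes the argument.

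Granting Theorem~\ref{equivalence}, the only genuinely delicate point is the exponent bookkeeping: one must check that a single window $(p_a,p_b)$ can be chosen to accommodate the $L^p$, $L^{p_1}$ and $L^{q_2}$ constraints from the Sobolev equivalence as well as the $(1,\infty)$ constraint from the flat Leibniz rule, and this is exactly where the lower bound on $a$ (equivalently the smallness of $\rho$, equivalently the margin $b$) enters. The real depth is hidden in Theorem~\ref{equivalence} itself, whose proof rests on the Gaussian heat-kernel bounds and Mikhlin-type spectral multiplier theory for $\La$ from \cite{11}.

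If one instead wanted a proof that does not use Theorem~\ref{equivalence} as a black box---matching the method of \cite{12}---one would run the paraproduct argument directly in the $\La$-calculus: introduce Littlewood--Paley projectors $P_N=\psi(\La/N^2)$, decompose $fg=\sum_{N\gg M}P_Nf\,P_Mg+\sum_{N\ll M}P_Nf\,P_Mg+\sum_{N\sim M}P_Nf\,P_Mg$, and estimate each piece using the $L^p$ square-function equivalence, the Bernstein inequalities $\|P_N h\|_{L^q}\lesssim N^{d(\frac1p-\frac1q)}\|P_N h\|_{L^p}$, and the almost-orthogonality of the $P_N$ (all available for $\La$ in the range $(p_a,p_b)$, again from \cite{11}). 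In that route the main obstacle is the high--high-to-low interaction $\sum_N P_{\ll N}(P_Nf\,P_Ng)$: one has to redistribute the output frequency via Bernstein, and it is precisely this step that pins the admissible exponents to the restricted range $(p_a,p_b)$ in which the $\La$-adapted Bernstein and square-function estimates hold.
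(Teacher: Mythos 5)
The paper itself offers no proof of this lemma: it is simply recalled with a citation to Christ--Weinstein, whose result is for $(-\Delta)^{s/2}$, not for $\La$. Your proposal supplies exactly the argument that is implicit in the literature the paper leans on (e.g.\ the Killip--Murphy--Visan--Zheng line of work): sandwich the flat Kato--Ponce/Christ--Weinstein product rule between the two halves of the Sobolev-space equivalence of Theorem~\ref{equivalence}, using that $C_c^\infty(\R^d\setminus\{0\})$ is closed under multiplication so the equivalence applies to $fg$, $f$, and $g$ simultaneously. This is correct, and your identification of the only delicate point --- the exponent bookkeeping --- is accurate: the second half of Theorem~\ref{equivalence} constrains $p$ (via $\max\{\tfrac{s}{d},\tfrac{\sigma}{d}\}<\tfrac1p<\min\{1,\tfrac{d-\sigma}{d}\}$), while the first half constrains $p_1$ and $q_2$ (via $\tfrac{s+\sigma}{d}<\tfrac1{p_1},\tfrac1{q_2}<\min\{1,\tfrac{d-\sigma}{d}\}$), and one takes $(p_a,p_b)$ inside the intersection, which is nonempty because $\sigma<\tfrac{d-2}{2}$ and has uniform size under the stated margin on $a$; the Hölder relations $\tfrac1p=\tfrac1{p_1}+\tfrac1{p_2}=\tfrac1{q_1}+\tfrac1{q_2}$ are compatible with this since $\tfrac1p>\tfrac1{p_1}$. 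Two small remarks: at $s=1$ you can bypass the fractional Leibniz rule entirely, writing $\nabla(fg)=f\nabla g+g\nabla f$ and using $\|(-\Delta)^{1/2}h\|_{L^p}\sim\|\nabla h\|_{L^p}$ for $1<p<\infty$, which makes the middle step elementary; and your alternative sketch via $\La$-adapted Littlewood--Paley theory is a viable but strictly harder route, needed only if one wanted the estimate outside the exponent window where Theorem~\ref{equivalence} fails --- which the lemma, as stated, does not claim.
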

Strichartz estimates for the propagator $e^{-it\La}$ were proved in \cite{BPSTZ}.  Combining these with the Christ--Kiselev lemma \cite{CK}, we arrive at the following:

\begin{theorem}[Strichartz estimates, \cite{BPSTZ}]\label{strichartz}
Fix $ a>-(\frac{d-2}{2})^{2} $, The solution $ u $ to
\[
iu_{t}=\mathcal{L}_{a}u+F
\]
on an interval $ I \ni t_{0} $ obeys
\begin{equation}
||u||_{L^{q}_{t}L^{r}_{x}(I\times\mathbb{R}^{d})}\le C(||u(t_{0})||_{L^{2}(\mathbb{R}^{d})}+||F||_{L^{\tilde{q}'}_{t}L^{\tilde{r}'}_{x}(I\times\mathbb{R}^{d})})
\end{equation}
whenever $(q,r),(\tilde{q},\tilde{r})\in \Lambda_{0}$, $2\le q,\tilde{q}\le\infty$, and $q\neq \tilde{q}$.
\end{theorem}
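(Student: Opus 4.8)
The statement is quoted from \cite{BPSTZ}, so the plan is to recover it by the standard route: the Keel--Tao $TT^{*}$ machinery for the homogeneous estimate, followed by the Christ--Kiselev lemma for the retarded Duhamel term. The two fixed-time inputs the $TT^{*}$ argument needs are the energy identity $\|e^{-it\La}f\|_{L^{2}_{x}}=\|f\|_{L^{2}_{x}}$ and a dispersive bound for $e^{-it\La}$. The first is immediate: since $a>-(\tfrac{d-2}{2})^{2}$, the sharp Hardy inequality \eqref{iso} exhibits $\La$ as a non-negative self-adjoint operator, so $e^{-it\La}$ is unitary on $L^{2}_{x}$ by the spectral theorem. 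The dispersive bound is the genuine analytic content.

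To obtain it I would expand in spherical harmonics: writing $f=\sum_{\ell}f_{\ell}(r)\,Y_{\ell}(\omega)$, the operator $\La$ leaves each angular sector invariant and acts there as the Bessel-type operator $-\partial_{r}^{2}-\tfrac{d-1}{r}\partial_{r}+\tfrac{\nu_{\ell}^{2}-(\frac{d-2}{2})^{2}}{r^{2}}$ with index $\nu_{\ell}=\bigl((\ell+\tfrac{d-2}{2})^{2}+a\bigr)^{1/2}$, which is real and strictly positive precisely because $a>-(\tfrac{d-2}{2})^{2}$. On the $\ell$-th sector $e^{-it\La}$ is the multiplier $e^{-it\xi^{2}}$ acting through the order-$\nu_{\ell}$ Hankel transform, so its kernel is the oscillatory integral $\int_{0}^{\infty}e^{-it\xi^{2}}J_{\nu_{\ell}}(r\xi)J_{\nu_{\ell}}(s\xi)\,\xi\,d\xi$, and the goal is a bound on this kernel uniform in $\ell$ that sums upon reassembling the angular series, giving $\|e^{-it\La}f\|_{L^{\infty}_{x}}\lesssim|t|^{-d/2}\|f\|_{L^{1}_{x}}$; when $a<0$ only a weaker, weighted substitute for this decay is available, but \cite{BPSTZ} show that it still feeds the $TT^{*}$ argument across the whole admissible range. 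This uniform-in-angular-momentum analysis --- carried out in \cite{BPSTZ} via Bessel asymptotics and stationary phase --- is the step I expect to be the real obstacle, since the implied constants must be controlled as $\nu_{\ell}$ approaches its smallest value, which is exactly the regime where the negativity of $a$ bites.

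Granting the energy and dispersive inputs, the Keel--Tao theorem produces $\|e^{-it\La}f\|_{L^{q}_{t}L^{r}_{x}(I\times\mathbb{R}^{d})}\lesssim\|f\|_{L^{2}_{x}}$ for every $(q,r)\in\Lambda_{0}$, including the double endpoint $(2,\tfrac{2d}{d-2})$ (namely $(2,6)$ when $d=3$) via the endpoint Keel--Tao argument. Combining this homogeneous estimate with its dual yields the non-retarded inhomogeneous Strichartz bound; the Christ--Kiselev lemma \cite{CK} then converts the full time integral into the retarded Duhamel integral $\int_{t_{0}}^{t}$, at the cost of requiring $q\neq\tilde q$, which is exactly the strict separation of time-integrability exponents the lemma demands. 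Adding the homogeneous estimate for the $e^{-i(t-t_{0})\La}u(t_{0})$ term then gives the stated inequality.
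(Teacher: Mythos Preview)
Your proposal is correct and matches the paper's approach exactly: the paper does not give its own proof but simply cites \cite{BPSTZ} for the homogeneous Strichartz estimates and invokes the Christ--Kiselev lemma \cite{CK} to pass to the retarded inhomogeneous estimate, which is precisely the route you outline. Your additional elaboration on how \cite{BPSTZ} obtains the dispersive input (spherical harmonics, Hankel transforms, uniform Bessel asymptotics feeding Keel--Tao) is accurate and goes beyond what the paper itself writes down.
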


Now we show the dispersive estimate which will play a key role  in the proof of the scattering criterion.

\begin{theorem}[Dispersive estimate, \cite{zheng}]\label{dis}
Let $ f $ be a radial function,
\begin{enumerate}
\item[(i)] If $a>0$, then we have
\begin{align}\label{dis1}
\|e^{it\mathcal{L}_a}f\|_{L^\infty(\mathbb{R}^d)}\leq C|t|^{-\frac{d}2}\|f\|_{L_x^1(\mathbb{R}^d)}\end{align}
\item[(ii)] If $-\frac{(d-2)^2}{4}<a<0$, then
\begin{align}\label{dis2}
\|(1+|x|^{-\sigma})^{-1}e^{it\mathcal{L}_a}f\|_{L^\infty(\mathbb{R}^d)}\leq C\frac{1+|t|^{\sigma}}{|t|^{\frac{d}2}}\|(1+|x|^{-\sigma})f\|_{L_x^1(\mathbb{R}^d)}\end{align}
\end{enumerate}
\end{theorem}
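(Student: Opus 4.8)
The plan is to diagonalize $\La$ on the radial sector by a Hankel transform, extract an explicit oscillatory kernel for the propagator $e^{it\La}$, and then read off both bounds from classical Bessel asymptotics. \textbf{Step 1: reduction and kernel formula.} Since $\La$ commutes with the spherical Laplacian, on radial functions the unitary map $f(r)\mapsto r^{(d-1)/2}f(r)$ conjugates $\La$ to the one-dimensional Bessel operator $-\partial_r^2+\tfrac{\nu^2-1/4}{r^2}$ on $L^2((0,\infty),dr)$, with
\[
\nu:=\Bigl[\bigl(\tfrac{d-2}2\bigr)^2+a\Bigr]^{1/2}=\tfrac{d-2}2-\rho,
\]
$\rho$ as in \eqref{rho}; note $\nu>0$ whenever $a>-(\tfrac{d-2}2)^2$, and $\rho\le 0$ exactly when $a\ge 0$. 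This operator is diagonalized by the Hankel transform of order $\nu$, so I would start from Weber's second exponential integral
\[
\int_0^\infty e^{-p^2\xi^2}J_\nu(r\xi)J_\nu(s\xi)\,\xi\,d\xi=\frac{1}{2p^2}\exp\!\Bigl(-\tfrac{r^2+s^2}{4p^2}\Bigr)I_\nu\!\Bigl(\tfrac{rs}{2p^2}\Bigr)\qquad(\Re p^2>0),
\]
and continue it analytically to $p^2=it$, using $I_\nu(iz)=e^{i\pi\nu/2}J_\nu(z)$ together with an $\epsilon$-regularization to handle the conditionally convergent oscillatory integral. Undoing the conjugation produces, for radial $f$,
\[
\bigl[e^{it\La}f\bigr](x)=\int_{\mathbb{R}^d}K_t(x,y)f(y)\,dy,\qquad K_t(x,y)=\frac{c_d\,\omega(t)}{|t|^{d/2}}e^{i\frac{|x|^2+|y|^2}{4t}}\,m_\nu\!\Bigl(\tfrac{|x||y|}{2|t|}\Bigr),
\]
where $|\omega(t)|=1$ and $m_\nu(z):=z^{-(d-2)/2}J_\nu(z)$. (When $a=0$ one has $\nu=\tfrac{d-2}2$, and since the spherical average of $e^{-ix\cdot y/2t}$ is a multiple of $m_{(d-2)/2}(|x||y|/2|t|)$ this collapses to the usual free Schr\"odinger kernel.)

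\textbf{Step 2: kernel bounds.} Everything then reduces to bounding $m_\nu$. From $J_\nu(z)\sim\frac{(z/2)^\nu}{\Gamma(\nu+1)}$ as $z\to0^+$ and $|J_\nu(z)|\lesssim z^{-1/2}$ as $z\to\infty$, I get $|m_\nu(z)|\lesssim z^{-\rho}$ on $0<z\le1$ and $|m_\nu(z)|\lesssim z^{-(d-1)/2}\le1$ on $z\ge1$, with $|m_\nu|$ bounded on compact subsets of $(0,\infty)$. Consequently, if $a>0$ then $\rho<0$ and $|m_\nu(z)|\lesssim1$ for all $z>0$, so $|K_t(x,y)|\lesssim|t|^{-d/2}$ uniformly; while if $-\tfrac{(d-2)^{2}}{4}<a<0$ then $0<\rho<\tfrac{d-2}2$ and $|m_\nu(z)|\lesssim1+z^{-\rho}$ for all $z>0$, so $|K_t(x,y)|\lesssim|t|^{-d/2}\bigl(1+(|x||y|/|t|)^{-\rho}\bigr)$.

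\textbf{Step 3: conclusion.} For $a>0$, combining the uniform kernel bound with $\|e^{it\La}f\|_{L^\infty}\le\|K_t\|_{L^\infty_{x,y}}\|f\|_{L^1}$ gives \eqref{dis1}. For $a<0$, I would take $\sigma=\rho$ and use $\frac{r^{-\rho}}{1+r^{-\rho}}\le1$ to obtain
\[
\frac{|K_t(x,y)|}{(1+|x|^{-\rho})(1+|y|^{-\rho})}\lesssim\frac{1}{|t|^{d/2}}\Bigl(1+|t|^{\rho}\,\frac{|x|^{-\rho}}{1+|x|^{-\rho}}\,\frac{|y|^{-\rho}}{1+|y|^{-\rho}}\Bigr)\lesssim\frac{1+|t|^{\rho}}{|t|^{d/2}};
\]
applying the resulting weighted $L^1\to L^\infty$ bound to $g:=(1+|x|^{-\rho})f$ yields \eqref{dis2}, and since $\tfrac{1+|t|^\rho}{|t|^{d/2}}\le\tfrac{1+|t|^\sigma}{|t|^{d/2}}$ whenever $\sigma\ge\rho$, the estimate holds for all such $\sigma$.

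The hard part will be Step 1: rigorously justifying the Hankel diagonalization of $\La$ on the radial sector and the analytic continuation of Weber's formula to purely imaginary $p^2$, where the defining integral converges only conditionally. I expect to handle this by regularizing with $p^2=\epsilon+it$ and letting $\epsilon\downarrow0$, using the $L^2$-unitarity of the Hankel transform to pass to the limit; after that the argument is just elementary Bessel asymptotics and a Schur-type estimate. One should also record that $0<\rho<\tfrac{d-2}2$, so that $|x|^{-\rho}$ is locally integrable and the right-hand side of \eqref{dis2} is a genuine weighted $L^1$ norm.
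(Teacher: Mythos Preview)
The paper does not prove this theorem at all: it is simply quoted from \cite{zheng} and used as a black box in the scattering-criterion argument. So there is no ``paper's own proof'' to compare against.

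That said, your sketch is the standard and correct route to these estimates, and is essentially what is carried out in \cite{zheng} (and in earlier work on radial Schr\"odinger propagators with inverse-square potentials). The Hankel diagonalization of $\La$ on the radial sector, the Weber--Schl\"afli integral continued to $p^2=it$, and the resulting kernel $K_t(x,y)=c\,|t|^{-d/2}e^{i(|x|^2+|y|^2)/4t}\,(|x||y|/2|t|)^{-(d-2)/2}J_\nu(|x||y|/2|t|)$ are exactly right, as is your identification $\sigma=\rho$ and the dichotomy $\rho<0\Leftrightarrow a>0$ versus $0<\rho<\tfrac{d-2}{2}\Leftrightarrow -\tfrac{(d-2)^2}{4}<a<0$. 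Two small remarks: first, the paper's $\sigma$ in \eqref{dis2} is indeed meant to be the $\rho$ of \eqref{rho}, so your ``$\sigma\ge\rho$'' clause is unnecessary generality; second, the $\epsilon$-regularization you flag as the hard part is routine once you use that the Hankel transform is an $L^2$-isometry and that $J_\nu$ is bounded near infinity, so the conditionally convergent integral is handled by dominated convergence after a standard integration by parts or by the spectral theorem applied to $e^{-(\epsilon+it)\La}$.
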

In particularly,  then we can use the Riesz interpolation inequality to obtain
\begin{align}\label{dis3}
\|(1+|x|^{-\sigma})^{-1+\frac2p}e^{it\mathcal{L}_a}f\|_{L^p(\mathbb{R}^d)}\leq C|t|^{(-\frac{d}2+\sigma)(1-\frac2p)}\|(1+|x|^{-\sigma})^{1-\frac2p}f\|_{L_x^{p'}(\mathbb{R}^d)},
\end{align}
if $-\frac{(d-2)^2}{4}<a<0$.
\section{Local wellposedness}
In this section, we  state the local well-posedness for \eqref{problem-eq: NLS-H}.

\begin{theorem}[Local well-posedness]\label{T:LWP}
Assume $d\geq 3$,  $u_0\in H_x^1(\R^d)$,   $a$, $d$ and $\alpha$ satisfy \eqref{important}.
\begin{enumerate}
\item Then we have there exist $T=T(\|u_0\|_{H_a^1})>0$ and a unique solution $u:(t_0-T, t_0+T)\times\R^d\to\C$ to \eqref{problem-eq: NLS-H} with $u(t_0)=u_0$.
\item Given $A\ge 0$,  then there exists $\eta=\eta(A)$ such that $u_{0}\in H^{s_{c}}_{a}(\mathbb{R}^{d})$ obeys
\begin{equation}\label{local condition}
    ||(\sqrt{\mathcal{L}_{a}})^{s_{c}}u_{0}||_{L^{2}(\mathbb{R}^{d})}\le A\ \text{and}\ \ ||e^{it\mathcal{L}_{a}}u_{0}||_{L^{\alpha}_{t, x}}(I\times\mathbb{R}^{d})\le \eta.
\end{equation}
for some time interval $ I \ni 0 $. Then there is a unique strong solution $u$ to \eqref{problem-eq: NLS-H} on the time interval $ I $ such that
\[
||u||_{L^{\alpha}_{t, x}}(I\times\mathbb{R}^{d}) \le C \eta\ \ \text{and}\ \
||(\sqrt{\mathcal{L}_{a}})^{s_{c}}u||_{C_{t}L^{2}_{x} \cap L^{\alpha}_{t}L^{\beta}_{x}(I\times\mathbb{R}^{d})} \le CA.
\]
where $s_{c}=\frac{\gamma}{2}-1$, $\frac{1}{\alpha}=\frac{d-2s_{c}}{2(d+2)}$ and $\frac{1}{\beta}=\frac{1}{2}-\frac{d-2s_{c}}{d(d+2)}$.
$ a > -(\frac{d-2}{2})^{2} + b $, here
\begin{align}\label{b}
b= \max\{(\frac{6s_{c}+1}{d+2})^{2}, (\frac{d s_{c}-2}{d+2})^{2}, (\frac{-2d+2+6s_{c}}{d+2})^{2}\}.
\end{align}
\end{enumerate}
\end{theorem}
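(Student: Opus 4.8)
The plan is to solve the Duhamel equation
\[
\Phi(u)(t) = e^{-i(t-t_0)\La}u_0 + i\int_{t_0}^{t} e^{-i(t-s)\La}\bigl[(|\cdot|^{-\gamma}\ast|u|^2)u\bigr](s)\,ds
\]
by the contraction-mapping principle in a space of Strichartz type, using the Strichartz estimates of Theorem~\ref{strichartz} together with the harmonic-analysis tools adapted to $\La$ recalled in Section~2. Throughout, write $N(u):=(|\cdot|^{-\gamma}\ast|u|^2)u$ for the nonlinearity.

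For part~(1), fix an $\La$-admissible pair $(q,r)\in\Lambda_0$ and set, on $I=(t_0-T,t_0+T)$,
\[
X_T:=\bigl\{u:\ \|u\|_{L^\infty_t H_a^1(I\times\R^d)}+\|\sqrt{1+\La}\,u\|_{L^q_tL^r_x(I\times\R^d)}\le 2C\|u_0\|_{H_a^1}\bigr\},
\]
with metric given by the $L^q_tL^r_x$ norm. The heart of the matter is the nonlinear estimate: one bounds $\|\sqrt{1+\La}\,N(u)\|_{L^{\tilde q'}_tL^{\tilde r'}_x}$ by first applying H\"older in time (which is where the gain $T^\theta$, $\theta>0$, appears, using $s_c<1$), then H\"older and the Hardy--Littlewood--Sobolev inequality in space to absorb the convolution kernel (legitimate since $0<\gamma<d$), and finally the fractional product rule of Lemma~\ref{product} to distribute $\sqrt{1+\La}$ among the three copies of $u$. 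To convert the resulting $\|\sqrt{\La}(\cdot)\|_{L^p}$ norms into $\|\nabla(\cdot)\|_{L^p}$ norms (and thus close in $H_a^1\sim H^1$ via \eqref{iso}) one invokes the equivalence of Sobolev spaces, Theorem~\ref{equivalence}. This is exactly where hypothesis~\eqref{important} enters: it guarantees that every Lebesgue exponent produced by the H\"older/HLS bookkeeping lies in the admissible window of Theorem~\ref{equivalence} (with the parameter $\rho$ of \eqref{rho}), while the assumption $a>-(\tfrac{d-2}{2})^2+b$ with $b$ as in \eqref{b} makes Lemma~\ref{product} applicable. One obtains $\|\sqrt{1+\La}\,N(u)\|_{L^{\tilde q'}_tL^{\tilde r'}_x}\lesssim T^\theta\|u\|_{X_T}^3$, so for $T=T(\|u_0\|_{H_a^1})$ small, $\Phi$ maps $X_T$ into itself; a parallel differencing estimate $\|\Phi(u)-\Phi(v)\|\lesssim T^\theta(\|u\|_{X_T}^2+\|v\|_{X_T}^2)\|u-v\|$ gives the contraction, hence existence and uniqueness.

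For part~(2) one runs the same scheme at the scaling-critical regularity. Take $(\alpha,\beta)\in\Lambda_{s_c}$ as prescribed and work in
\[
Y:=\bigl\{u:\ \|u\|_{L^\alpha_{t,x}(I\times\R^d)}\le 2C\eta,\ \ \|(\sqrt{\La})^{s_c}u\|_{C_tL^2_x\cap L^\alpha_tL^\beta_x(I\times\R^d)}\le 2CA\bigr\}.
\]
Since $N$ is $\dot H^{s_c}_x$-critical, no power of $T$ is available; instead the nonlinear estimate takes the scale-invariant form $\|(\sqrt{\La})^{s_c}N(u)\|_{(\mathrm{dual\ Strichartz})}\lesssim\|u\|_{L^\alpha_{t,x}}^2\,\|(\sqrt{\La})^{s_c}u\|_{L^\alpha_tL^\beta_x}$, obtained by the same combination of H\"older, HLS, the product rule and the Sobolev equivalence, and the closure of the fixed point is driven by the smallness $\|e^{-it\La}u_0\|_{L^\alpha_{t,x}}\le\eta$ through a standard bootstrap of the Strichartz norm. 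Here the three exponents appearing in \eqref{b} are precisely the H\"older exponents that arise when $(\sqrt{\La})^{s_c}$ is moved onto the individual factors, so $a>-(\tfrac{d-2}{2})^2+b$ is exactly what is needed for Lemma~\ref{product} and Theorem~\ref{equivalence} to apply with this choice of $(\alpha,\beta)$.

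The main obstacle is the nonlocal term: unlike a pure power $|u|^{2}u$, the nonlinearity $N(u)$ carries a convolution, so after Hardy--Littlewood--Sobolev one must still control a triple product whose fractional $\La$-derivative is estimated via Lemma~\ref{product}, and the Lebesgue exponents feeding HLS and the product rule must lie \emph{simultaneously} inside the (narrow, $a$-dependent) admissible windows of Theorem~\ref{equivalence}. Reconciling all these constraints at once --- in particular the lower threshold on $\tfrac1p$ involving $s+\rho$, which tightens as $a$ decreases --- is what forces the dichotomy between $2<\gamma\le 3$ and $3<\gamma<4$ and the two distinct lower bounds on $a$ in \eqref{important}. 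Once the admissible exponents are pinned down, the remaining estimates are routine Strichartz/fixed-point manipulations.
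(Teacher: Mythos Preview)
Your proposal is correct and follows essentially the same approach as the paper: a contraction-mapping argument in Strichartz-type spaces, combining Theorem~\ref{strichartz}, Hardy--Littlewood--Sobolev for the convolution, the product rule, and the equivalence of Sobolev spaces (Theorem~\ref{equivalence}). One minor clarification: hypothesis~\eqref{important} is the constraint relevant to part~(1) (via the embedding $H_a^{s,\tilde r}\hookrightarrow H_x^{s,\tilde r}$ for a specific choice of $(\tilde q,\tilde r,s)$), whereas the condition $a>-(\tfrac{d-2}{2})^2+b$ with $b$ from~\eqref{b} is the sharper requirement for part~(2); also, in part~(1) the paper sidesteps Lemma~\ref{product} by passing once to classical Sobolev spaces and then using the ordinary Leibniz rule for $\nabla$.
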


\begin{proof}
By time-translation symmetry we may choose $ t_0 =0 $. The proofs follow along standard lines using the contraction mapping principle; For convenience, all space-time norms in the proof will be taken over $ (-T, T) \times \R^d $. Define the map $ \Phi $ as
\[
[\Phi u](t) = e^{-it\mathcal{L}}u_0 + i\int_0^t e^{-i(t-s)\L}\bigl((|\cdot|^{-\gamma}\ast|u|^{2}) u(s)\bigr)\, ds.
\]
\begin{enumerate}
  \item We fix $\beta$ to be determined shortly and define the parameters
\[
(\tilde q, \tilde r) =(\tfrac{4\beta}{\beta-2},  \tfrac{2d\beta}{\beta d-\beta+2}), \quad s = \tfrac{1}{\beta}+\tfrac{\gamma-2}{2}.
\]
Fix $T>0$ and set $A=\|u_0\|_{H_a^1}$.
we need to show $\Phi$ is a contraction on the space
\[
B_T = \{u\in C_t H_a^1 \cap L_t^{\tilde q} H_a^{1, \tilde r}: \|u\|_{L^\infty_t H_a^1} \leq CA, \quad \|u\|_{L_t^{\tilde q} H_a^{1, \tilde r}}\leq CA\},
\]
which is complete with respect to the metric
\[
d(u, v) = \| u - v \|_{L_t^{\tilde q} L_x^{\tilde r}}.
\]

Let $u\in B_T$.  By Sobolev embedding and equivalence of Sobolev spaces,
\begin{align}\label{equa}
\|u\|_{L_t^{\tilde q} L_x^{\frac{2d}{d+1-\gamma}}} \lesssim \| u\|_{L_t^{\tilde q} H_a^{s, \tilde r}}  \lesssim A,
\end{align}
where we have used  $s\in[0, 1]$ and $H_a^{s, \tilde r}\hookrightarrow H_x^{s, \tilde r}$.

Thus,  by Strichartz,  Sobolev embedding,  and equivalence of Sobolev spaces,
\begin{align*}
\|\Phi u\|_{L^\infty_t H_a^1}\vee \|\Phi u\|_{L_t^{\tilde q} H_a^{1, \tilde r}} & \lesssim \|u_0\|_{H_a^1} + \|(|\cdot|^{-\gamma}\ast|u|^{2}) u\|_{L_t^2 H_x^{1, \frac{2d}{d+2}}} \\
& \lesssim A + |T|^{\tfrac{1}{\beta}} \| u\|_{L_t^{\tilde q} L_x^{\frac{2d}{d+1-\gamma}}}^{2} \|u\|_{L^\infty_t H_a^1}
\end{align*}
where we need $\beta>0$, and $(\tilde q, \tilde r)$ is an admissible pair.
Thus $\Phi:B_T\to B_T$,  provided $C$ is chosen sufficiently large and $T=T(\|u_0\|_{H_a^1})$ sufficiently small.  Similarly,  for $u, v\in B_T$,
\begin{align*}
&\|\Phi u-\Phi v\|_{L_t^{\tilde q} L_x^{\tilde r}}  \lesssim \| |u|^{\alpha}u - |v|^{\alpha} v\|_{L_{t}^{\frac{4\beta}{3\beta-2}}L_{x}^{\frac{2d\beta}{\beta+\beta d+2}}}\\
 & \lesssim |T|^{\frac{1}{\beta}}\|u-v\|_{L_t^{\tilde q} L_x^{\tilde r}} \bigl( \| u\|_{L_t^{\tilde q} L_x^{\frac{2d}{d+1-\gamma}}}^{2} + \|v\|_{L_t^{\tilde q} L_x^{\frac{2d}{d+1-\gamma}}}^{2} \bigr) \\
& \lesssim |T|^{\frac{1}{\beta}}A^2\|u-v\|_{L_t^{\tilde q} L_x^{\tilde r}},
\end{align*}
so that $\Phi$ is a contraction on $B_T$,  provided $T=T(\|u_0\|_{H_a^1})$ is sufficiently small.   This completes the proof.
\item It suffices to prove that $\Phi$  is a contraction on the (complete) space
\begin{align*}
    B:=\{u\in C_{t}H^{s_{c}}_{a}\cap L^{\alpha}_{t}H^{s_{c}, \beta}_{a}(I\times \mathbb{R}^{d}):||(\sqrt{\mathcal{L}_{a}})^{s_{c}}u||_{L^{\alpha}_{t}L^{\beta}_{x}}\le CA\  \ ||u||_{L^{\alpha}_{t, x}}\le 2\eta\}
\end{align*}
endowed with the metric
\[
    d(u, v):=||u-v||_{L^{\alpha}_{t}L^{\beta}_{x}}.
\]
where $\frac{1}{\alpha}=\frac{d-2s_{c}}{2(d+2)}$ and $\frac{1}{\beta}=\frac{1}{2}-\frac{d-2s_{c}}{d(d+2)}$.  The constant $C$ depends only on the dimension $d$ and $a$,  which reflects various constants in the Strichartz and Sobolev embedding inequalities.

By the Strichartz inequality,  (\ref{local condition}) and Lemma \ref{product},  weak young inequality and H\"{o}lder inequality,  for $u\in B$ we have
\[
\begin{aligned}
      &||(\sqrt{\mathcal{L}_{a}})^{s_{c}}\varphi(u)||_{C_{t}L^{2}_{x}\cap L^{\alpha}_{t}L^{\beta}_{x}}\\
      \le&CA+C||(\sqrt{\mathcal{L}_{a}})^{s_{c}}[(|\cdot|^{-\gamma}\ast|u|^{2})u]||_{L^{q'}_{t}L^{r'}_{x}}\\
      \le&CA+C||(\nabla)^{s_{c}}[(|\cdot|^{-\gamma}\ast|u|^{2})u]||_{L^{q'}_{t}L^{r'}_{x}}\\
      \le&CA+C|||\cdot|^{-\gamma}||_{L^{\frac{d}{\gamma}}_{x}(\mathbb{R}^{d})}||(\nabla)^{s_{c}}u||_{L^{\alpha}_{t}L^{\beta}_{x}}||u||^{2}_{L^{\alpha}_{t, x}}\\
      \le&CA+C||(\sqrt{\mathcal{L}_{a}})^{s_{c}}u||_{L^{\alpha}_{t}L^{\beta}_{x}}||u||^{2}_{L^{\alpha}_{t, x}}\\
      \le&CA+C\eta^{2}A\\
      \le&CA(\text{let}\ \eta\ \text{small enough})
\end{aligned}
\]
where $\frac{1}{q'}=\frac{3}{\alpha}=\frac{3(d-2s_{c})}{2(d+2)}$ and $\frac{1}{r'}=\frac{1}{2}+\frac{4-d+6s_{c}}{d(d+2)}$.

Similarly,  for $u\in B$,
\[
\begin{aligned}
    &||\Phi(u)||_{C_{t}L^{2}_{x}\cap L^{\alpha}_{t}L^{\beta}_{x}}\\
    \le&C||u_{0}||_{L^{2}_{x}(\mathbb{R}^{d})}+C||(|\cdot|^{-\gamma}\ast|u|^{2})u||_{L^{q'}_{t}L^{r'}_{x}}\\
    \le&C||u_{0}||_{L^{2}_{x}(\mathbb{R}^{d})}+C\eta^{2}||u||_{L^{\alpha}_{t}L^{\beta}_{x}}\\
    <&\infty.
\end{aligned}
\]
By $H^{s_{c}, \beta}\hookrightarrow L^{\alpha}$ and proceeding once more in a parallel manner shows
\[
\begin{aligned}
    &||\varphi(u)||_{L^{\alpha}_{t, x}}\\
    \le&C\eta+C||(\sqrt{\mathcal{L}_{a}})^{s_{c}}u||_{L^{\alpha}_{t}L^{\beta}_{x}}||u||^{2}_{L^{\alpha}_{t, x}}\\
    \le&C\eta+C(2\eta)^{2}2A\\
    \le&2\eta(\text{provided}\ \eta\ \text{is chosen sufficiently small}).
\end{aligned}
\]
To see that $\Phi$ is a contraction,  we argue analogously:
\[
\begin{aligned}
  ||\Phi(u)-\Phi(v)||_{L^{\alpha}_{t}L^{\beta}_{x}}\le& C||(|\cdot|^{-\gamma}\ast|u|^{2})u-(|\cdot|^{-\gamma}\ast|v|^{2})v||_{L^{q'}_{t}L^{r'}_{x}}\\
  \le&C||(|\cdot|^{-\gamma}\ast(\bar{u}-\bar{v})u)u||_{L^{q'}_{t}L^{r'}_{x}}\\
  +&||(|\cdot|^{-\gamma}\ast(u-v)\bar{v})u||_{L^{q'}_{t}L^{r'}_{x}}+||(|\cdot|^{-\gamma}\ast|u|^{2})(u-v)||_{L^{q'}_{t}L^{r'}_{x}}\\
  \le&C||u-v||_{L^{\alpha}_{t}L^{\beta}_{x}}(||u||^{2}_{L^{\alpha}_{t, x}}+||v||_{L^{\alpha}_{t, x}}||u||_{L^{\alpha}_{t, x}})\\
  \le&C\eta^{2}||u-v||_{L^{\alpha}_{t}L^{\beta}_{x}},
\end{aligned}
\]
If $\eta$ is small enough, we have $d(\Phi(u), \Phi(v))\le\frac{1}{2}d(u, v)$.
\end{enumerate}
\end{proof}

\begin{remark}
In Theorem \ref{T:LWP} (1), we need
\begin{enumerate}
                \item $\tfrac{1}{\beta}>0$,  this means $0<\beta<\infty$
                \item $s=\tfrac{1}{\beta}+\tfrac{\gamma-2}{2}\in[0, 1]$,  thus if $\gamma<3$,  then $\beta>2$; if $\gamma\geq 3$,  then $\beta>\frac{2}{4-\gamma}$
                \item $H_a^{s, \tilde r}\hookrightarrow H_x^{s, \tilde r}$,  thus
                \begin{align}
                \frac{\tfrac{1}{\beta}+\tfrac{\gamma-2}{2}+\sigma}{d}<\frac{\beta d-\beta+2}{2\beta d}<\min\{1, \tfrac{d-\sigma}{d}\}\end{align}
                this means $\sigma<\min\{\frac{d+1-\gamma}{2}, \frac{d+1}{2}-\frac{1}{\beta}\}=\frac{d+1-\gamma}{2}$
                \item $(\tilde q, \tilde r)$ is an admissible pair,  i. e.  $\tfrac{4\beta}{\beta-2}>2$.  we need $\beta>2$
\end{enumerate}
We now choose
\[
2 \vee \frac{2}{4-\gamma} < \beta <\infty.
\]
The upper bound and the lower bound on $\beta$ guarantees that $(\tilde q, \tilde r)$ is an admissible pair and $s\leq 1$.  The conditions on $a$ in \eqref{important} guarantee that $\dot H_x^{s, \tilde r}$ is equivalent to $\dot H_a^{s, \tilde r}$.

In Theorem \ref{T:LWP} (2),
\[
||\langle\nabla\rangle^{s} f||_{L^{p}}\le C(d, p, s)||\mathcal{L}_{a}^{\frac{s}{2}}f||_{L^{p}}\ \ \ \text{for all}\ f\in C_{c}^{\infty}(\mathbb{R}^{d}\setminus\{0\}).
\]
and the inverse inequality provided $p=r'=\frac{1}{2}+\frac{4-d+6s_{c}}{d(d+2)}$.
It can be calculated as follow.
\[
  \left\{
    \begin{aligned}
    &\frac{s_{c}+\sigma}{d}<\frac{1}{2}-\frac{d-2s_{c}}{d(d+2)}<\min\{1, \frac{d-\sigma}{d}\}\\
    &\max\{\frac{s_{c}}{d}, \frac{\sigma}{d}\}<\frac{1}{2}+\frac{4-d+6s_{c}}{d(d+2)}<\min\{1, \frac{d-\sigma}{d}\}
    \end{aligned}
  \right.
\]
it means

$\sigma<\min\{\frac{d^{2}-2s_{c}d}{2(d+2)}, \frac{d^{2}+8+12s_{c}}{2(d+2)}, \frac{d^{2}+4d-8-12s_{c}}{2(d+2)}\}$.

By the definition of $\sigma$,  we concluded that $a>-(\frac{d-2}{2})^{2}+b, $ where
$b$ satisfies \eqref{b}.
\end{remark}

\section{Variational Characterization}
In this section, we are in the position to give the variational characterization for the sharp Gargliardo-Nirenberg inequality. Firstly, We  will show the existence of the Ground state. As a corollary, we obtain  the sharp Gargliardo-Nirenberg inequality. Then we use the properties of the ground state to establish the Coercivity condition which will be used in the proof of Morawetz Estimate \eqref{Me}.

\begin{proposition}[The Existence of Ground State]\label{ground state}
The minimal $J_{\min}$ of the nonnegative funtional
\[
    J(u) : = (M(u))^{\frac{4-\gamma}{2}}(H(u))^{\frac{\gamma}{2}}(P(u))^{-1},  \qquad  u\in H^{1}(\mathbb R^{d}\setminus \{0\})
\]
are attained at a point $ W $, whose expression has to be in the form of $ W(x) = e^{i\theta} m Q (nx) $, where $ m, n > 0 $, $ \theta \in \mathbb R $, and $ Q\neq 0 $ is the non-negative radial solution of the equation
\begin{equation}\label{eq:ground-state}
(-\Delta + a |x|^{-2})Q + Q = (|\cdot|^{-\gamma} \ast |Q|^{2}) Q,
\end{equation}
where $ -(\tfrac{d-2}2)^{2} < a < 0 $ and $ 2 < \gamma < \min\{4,d\} $.

If $ Q \ge 0 $ is a non-negative radial solution of the equation \eqref{eq:ground-state} such that $J(Q)=J_{\min}$, then $ Q $ is called a {\bf Ground state}. The sets of all ground states is denotes as $\mathcal G$. All ground states share the same mass, denoted as $M_{gs}$.
\end{proposition}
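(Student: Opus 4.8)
The plan is to obtain $W$ by the direct method together with symmetric rearrangement. The first thing I would record are the invariances and coercivity of $J$ (noting that $H^{1}(\R^{d}\setminus\{0\})=H^{1}(\R^{d})$ since $d\ge3$): a direct computation gives $M(\mu u(\lambda\cdot))=\mu^{2}\lambda^{-d}M(u)$, $H(\mu u(\lambda\cdot))=\mu^{2}\lambda^{2-d}H(u)$, $P(\mu u(\lambda\cdot))=\mu^{4}\lambda^{\gamma-2d}P(u)$, whence $J(\mu u(\lambda\cdot))=J(u)$ for all $\mu,\lambda>0$ and $J(e^{i\theta}u)=J(u)$. For coercivity, by the sharp Hardy inequality \eqref{iso} one has $H(u)\sim\|\nabla u\|_{L^{2}}^{2}>0$, while combining the Hardy--Littlewood--Sobolev inequality (with the symmetric exponent $\tfrac{2d}{2d-\gamma}$, legitimate since $0<\gamma<d$) with Gagliardo--Nirenberg interpolation at the exponent $q:=\tfrac{4d}{2d-\gamma}$ --- which lies strictly in $(2,\tfrac{2d}{d-2})$ precisely because $2<\gamma<4$ --- yields $P(u)\lesssim\|u\|_{L^{q}}^{4}\lesssim M(u)^{\frac{4-\gamma}{2}}H(u)^{\frac{\gamma}{2}}$, i.e. $J(u)\gtrsim1$. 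Hence $J_{\min}\in(0,\infty)$.

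Next I would reduce to non-negative radial non-increasing competitors. For $u\in H^{1}$ the diamagnetic inequality gives $M(|u|)=M(u)$, $H(|u|)\le H(u)$, $P(|u|)=P(u)$; passing to the symmetric-decreasing rearrangement $u^{*}$, the P\'olya--Szeg\H{o} inequality gives $\|\nabla u^{*}\|_{L^{2}}\le\|\nabla u\|_{L^{2}}$, and since $|x|^{-2}$ is itself symmetric decreasing the Hardy--Littlewood rearrangement inequality gives $\int|x|^{-2}|u|^{2}\le\int|x|^{-2}|u^{*}|^{2}$, so --- \emph{this is exactly where the hypothesis $a<0$ is used} --- the term $a\int|x|^{-2}|u|^{2}$ can only decrease and $H(u^{*})\le H(u)$; moreover $P(u^{*})\ge P(u)$ by the Riesz rearrangement inequality (Lemma \ref{riesz}). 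Thus $J(u^{*})\le J(u)$, so I may take a minimizing sequence $\{u_{n}\}$ of non-negative radial non-increasing functions, and by the scaling invariance above I normalize it so that $M(u_{n})=H(u_{n})=1$; then $P(u_{n})\to J_{\min}^{-1}$ and $\{u_{n}\}$ is bounded in $H^{1}_{\mathrm{rad}}(\R^{d})$.

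Then I would pass to the limit. Extract $u_{n}\rightharpoonup W$ weakly in $H^{1}$. By the compact radial embedding $H^{1}_{\mathrm{rad}}(\R^{d})\hookrightarrow L^{q'}(\R^{d})$ for $2<q'<\tfrac{2d}{d-2}$, in particular at $q'=q=\tfrac{4d}{2d-\gamma}$, we have $u_{n}\to W$ strongly in $L^{q}$, and the bilinear Hardy--Littlewood--Sobolev bound $|P(u_{n})-P(W)|\lesssim\|u_{n}-W\|_{L^{q}}\bigl(\|u_{n}\|_{L^{q}}+\|W\|_{L^{q}}\bigr)^{3}$ gives $P(W)=\lim_{n}P(u_{n})=J_{\min}^{-1}>0$, so $W\not\equiv0$. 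Since $M$ and $H$ are weakly lower semicontinuous on $H^{1}$ --- the latter because $H(u)=\tfrac12\|\sqrt{\La}\,u\|_{L^{2}}^{2}$ is a non-negative quadratic form whose form-norm is equivalent to $\|\cdot\|_{H^{1}}$ by \eqref{iso} --- we get $M(W)\le1$, $H(W)\le1$, hence $J(W)\le J_{\min}$. Minimality forces $J(W)=J_{\min}$ together with $M(W)=H(W)=1$, so in fact $u_{n}\to W$ in $H^{1}$, and $W$ is a non-negative radial non-increasing minimizer.

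Finally I would derive the equation and the stated form. As $M,H,P$ are $C^{1}$ on $H^{1}$ (including the non-local $P$), the identity $\tfrac{d}{d\eps}J(W+\eps h)\big|_{\eps=0}=0$ for all $h$ yields, after clearing denominators, $\La W+\lambda W=\nu\,(|\cdot|^{-\gamma}\ast|W|^{2})W$ with $\lambda=\tfrac{(4-\gamma)H(W)}{\gamma M(W)}>0$ and $\nu=\tfrac{2H(W)}{\gamma P(W)}>0$ (positivity from $M(W),H(W),P(W)>0$), and elliptic regularity makes $W$ classical on $\R^{d}\setminus\{0\}$. Using the scaling covariances from the first paragraph, I set $W(x)=e^{i\theta}m\,Q(nx)$ with $n=\sqrt{\lambda}$ and $m^{2}=\nu^{-1}\lambda^{(d+2-\gamma)/2}$, so that $Q\ge0$ is radial, solves \eqref{eq:ground-state}, and $J(Q)=J_{\min}$; such $Q$ is a ground state. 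For the common-mass assertion, pairing \eqref{eq:ground-state} with $Q$ gives $H(Q)+M(Q)=2P(Q)$ and the Pohozaev (scaling) identity gives $(d-2)H(Q)+dM(Q)=(2d-\gamma)P(Q)$; solving these two linear relations yields $H(Q)=\tfrac{\gamma}{4-\gamma}M(Q)$ and $P(Q)=\tfrac{2}{4-\gamma}M(Q)$, and substituting into $J(Q)=J_{\min}$ fixes $M(Q)$ in terms of $(J_{\min},\gamma)$ alone, so every ground state has the same mass $M_{gs}$. The hard part is the compactness in the third paragraph, namely preventing escape of mass to spatial infinity; here it is dispatched cheaply by the rearrangement reduction, and that reduction is precisely what forces $-(\tfrac{d-2}{2})^{2}<a<0$ (for $a\ge0$ the rearrangement pushes mass toward the origin, where the potential is \emph{positive}, and one would instead need a full concentration--compactness analysis with a strict subadditivity inequality). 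A secondary point is verifying $\lambda,\nu>0$ and the admissibility of the rescaling, both immediate from $M(W),H(W),P(W)>0$.
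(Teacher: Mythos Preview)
Your proof is correct and follows essentially the same route as the paper: normalization by scaling, symmetric-decreasing rearrangement (exploiting $a<0$), the compact radial embedding into $L^{4d/(2d-\gamma)}$ to pass to the limit in $P$, weak lower semicontinuity of $M$ and $H$, the Euler--Lagrange equation plus rescaling to reach \eqref{eq:ground-state}, and Pohozaev-type identities for the common mass. The only item you omit is the paper's short a posteriori argument that \emph{every} minimizer (not just the one you construct) is radial with constant phase and hence of the form $e^{i\theta}mQ(n\cdot)$; if the ``has to be'' in the statement is read as a characterization of all minimizers you should add that step, but otherwise the two arguments coincide.
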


Before proving the proposition, we show a primary lemma.

\begin{lemma}\label{lem:Sobolev-Lv}
If
\[
    \lim_{n\to\infty} \| u_{n} - u\|_{L^{\frac{2d}{d-\frac{\gamma}{2}}}(\mathbb R^{d})}=0,
\]
we have
\[
    \lim_{n\to\infty}P(u_{n} - u)= 0
    \text{ and }
    \lim_{n\to\infty} P(u_{n}) = P(u).
\]
\end{lemma}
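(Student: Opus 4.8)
The plan is to reduce everything to the Hardy--Littlewood--Sobolev inequality together with the algebraic identity
\[
P(f)-P(g) = \tfrac14\iint \frac{|f(x)|^2|f(y)|^2-|g(x)|^2|g(y)|^2}{|x-y|^\gamma}\,dx\,dy,
\]
and to exploit the fact that the exponent $\tfrac{2d}{d-\gamma/2}$ is exactly the one for which $P$ is controlled: indeed by HLS with $\alpha=\gamma$, $p=q=\tfrac{2d}{2d-\gamma}$ applied to $f=|h|^2$, $g=|h|^2$, one has $P(h)\lesssim \||h|^2\|_{L^{2d/(2d-\gamma)}}^2 = \|h\|_{L^{4d/(2d-\gamma)}}^4$; and $\tfrac{4d}{2d-\gamma} = \tfrac{2d}{d-\gamma/2}$, which is precisely the Lebesgue space in the hypothesis. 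So the first claim $P(u_n-u)\to0$ is immediate: $P(u_n-u)\lesssim \|u_n-u\|_{L^{2d/(d-\gamma/2)}}^4\to0$.

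For the second claim I would write, abbreviating $r=\tfrac{2d}{d-\gamma/2}$ and $s=\tfrac{2d}{2d-\gamma}$ (so that $s'$-type duality pairs $|u|^2$ in $L^s$ with the Riesz potential landing in the dual), expand the difference of the two quartic forms by telescoping in the four factors. Concretely, $|u_n(x)|^2|u_n(y)|^2 - |u(x)|^2|u(y)|^2 = \big(|u_n(x)|^2-|u(x)|^2\big)|u_n(y)|^2 + |u(x)|^2\big(|u_n(y)|^2-|u(y)|^2\big)$, and each of $|u_n|^2-|u|^2$ is estimated in $L^{s}$ by $\big(\|u_n\|_{L^{r}}+\|u\|_{L^{r}}\big)\|u_n-u\|_{L^{r}}$ via the pointwise bound $\big||u_n|^2-|u|^2\big|\le (|u_n|+|u|)|u_n-u|$ and Hölder (with $\tfrac1r+\tfrac1r=\tfrac1s$, which one checks holds). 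Applying HLS to each of the two telescoped terms then gives
\[
|P(u_n)-P(u)| \lesssim \big(\|u_n\|_{L^{r}}+\|u\|_{L^{r}}\big)^{3}\,\|u_n-u\|_{L^{r}},
\]
and since $\|u_n\|_{L^r}$ is bounded (it converges to $\|u\|_{L^r}$), the right-hand side tends to $0$.

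The only mild subtlety — and the step I would be most careful about — is bookkeeping the exponents: verifying that $P$ really is bounded on $L^{2d/(d-\gamma/2)}$ via HLS requires checking $\tfrac1p+\tfrac1q+\tfrac\gamma d = 2$ with $p=q$ and $p$ the Hölder-dual exponent of $r/2$, i.e. $1<s=\tfrac{2d}{2d-\gamma}<\infty$ (needs $0<\gamma<2d$, true since $\gamma<\min\{4,d\}\le d<2d$) and $0<\gamma<d$ (true). One also needs $r<\infty$ and $r>1$, both clear. Given these checks, the proof is a two-line application of HLS plus a telescoping identity, with no functional-analytic obstacle; the hypothesis has been set up precisely so that the critical Lebesgue exponent for $P$ appears. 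I would present the bounded-difference estimate as the main computation and note that the first assertion is the special case $u\equiv 0$ (or rather, is obtained directly from the HLS bound on $P(u_n-u)$).
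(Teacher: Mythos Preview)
Your proposal is correct and follows essentially the same route as the paper's proof: both apply the Hardy--Littlewood--Sobolev inequality with $p=q=\tfrac{2d}{2d-\gamma}$ to obtain $P(h)\lesssim\|h\|_{L^{2d/(d-\gamma/2)}}^4$ for the first claim, and then use the same telescoping identity $|u_n(x)|^2|u_n(y)|^2-|u(x)|^2|u(y)|^2=(|u_n(x)|^2-|u(x)|^2)|u_n(y)|^2+|u(x)|^2(|u_n(y)|^2-|u(y)|^2)$ together with the pointwise bound $\bigl||u_n|^2-|u|^2\bigr|\le(|u_n|+|u|)\,|u_n-u|$ and H\"older to control $|P(u_n)-P(u)|$. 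Your exponent bookkeeping is accurate, and your final estimate $|P(u_n)-P(u)|\lesssim(\|u_n\|_{L^r}+\|u\|_{L^r})^3\|u_n-u\|_{L^r}$ is equivalent to the paper's.
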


\begin{proof}
By Hardy-Littlewood-Sobolev inequality, we gain
\begin{equation}\label{formula:Hardy-L-S in fact}
\begin{aligned}
  \left| \iint_{\mathbb R^{d}\times R^{d}} \frac{f_{1}(x)f_{2}(x)f_{3}(y)f_{4}(y)}{|x-y|^{\gamma}}dxdy \right|
  & \lesssim
  \|f_{1}f_{2}\|_{L^{\frac{d}{d-\frac{\gamma}{2}}}(\mathbb R^{d})} \cdot
  \|f_{3}f_{4}\|_{L^{\frac{d}{d-\frac{\gamma}{2}}}(\mathbb R^{d})} \\
  & \le \prod_{k=1}^{4}\|f_{k}\|_{L^{\frac{2d}{d-\frac{\gamma}{2}}}(\mathbb R^{d})}
\end{aligned}
\end{equation}
Therefore,
\[
    P(u_{n}- u) \lesssim \|u_{n}- u \|_{L^{\frac{2d}{d-\frac{\gamma}{2}}}(\mathbb R^{d})}^{4} \to 0,
    \text{ when }
    n \to \infty.
\]

Note that
\[
    |u(x)|^{2} |u(y)|^{2} - |v(x)|^{2} |v(y)|^{2}
  =
     (|u(x)|^{2}-|v(x)|^{2}) |u(y)|^{2} +  |v(x)|^{2} ( |u(y)|^{2}-|v(y)|^{2} )
\]

By \eqref{formula:Hardy-L-S in fact}, we get
\[
\begin{aligned}
  \left|P(u)-P(v)\right|
  \lesssim
  &\||u|^{2}-|v|^{2}\|_{L^{\frac{d}{d-\frac{\gamma}{2}}}(\mathbb R^{d})}
   \left(\| u\|^{2}_{L^{\frac{2d}{d-\frac{\gamma}{2}}}(\mathbb R^{d})} +\|v\|^{2}_{L^{\frac{2d}{d-\frac{\gamma}{2}}}(\mathbb R^{d})}\right) \\
  \lesssim
    &
   \|u-v\|_{L^{\frac{2d}{d-\frac{\gamma}{2}}}(\mathbb R^{d})}
   \left(
    \|u\|^{3}_{L^{\frac{2d}{d-\frac{\gamma}{2}}}(\mathbb R^{d})}+ \|u-v\|^{3}_{L^{\frac{2d}{d-\frac{\gamma}{2}}}(\mathbb R^{d})}
   \right)
\end{aligned}
\]

It means that if
  \[
    \lim_{n\to\infty} \| u_{n} - u\|_{L^{\frac{2d}{d-\frac{\gamma}{2}}}(\mathbb R^{d})} = 0,
  \]
we have
  \[
    \lim_{n\to\infty} P(u_{n}) = P(u).
  \]
\end{proof}
Next we give the  Schwartz symmetrical rearrangement argument about the functional $ J $.

\begin{lemma}\label{lemma:rearrangement}
Assume $u\in H^{1}(\mathbb R^{d})$ is a non-radial function, denote $u^{\ast}$ as the Schwartz symmetrical rearrangement  of $u$, then  $u^{\ast}\ge0$,
\[
    J(u^{\ast}) < J(u).
\]
\end{lemma}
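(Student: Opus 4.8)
The plan is to analyze how each of the three building blocks $M$, $H$, $P$ of the functional $J$ behaves under Schwarz symmetrization, exploiting that $u$ is assumed non-radial so that at least one of the relevant inequalities is strict. First I would record the three standard facts: (i) $\|u^\ast\|_{L^2} = \|u\|_{L^2}$, so $M(u^\ast) = M(u)$ exactly, since rearrangement is measure-preserving; (ii) the Pólya--Szegő inequality gives $\|\nabla u^\ast\|_{L^2}^2 \le \|\nabla u\|_{L^2}^2$; and (iii) for the inverse-square potential term, since $u^\ast$ is radial and nonincreasing while $|x|^{-2}$ is itself radial nonincreasing, one has $\int |x|^{-2}|u^\ast|^2 \le \int |x|^{-2}|u|^2$ — this is a direct consequence of the Riesz rearrangement inequality (Lemma \ref{riesz}) applied with $f = |u|^2$, $h = |x|^{-2}$, $g$ a delta-like mass, or more cleanly the ``bathtub''/layer-cake comparison $\int V |u|^2 \ge \int V^\ast (u^\ast)^2 = \int V (u^\ast)^2$ valid for $V = |x|^{-2} = V^\ast$. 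Combining (ii) and (iii) yields $H(u^\ast) \le H(u)$. For the Hartree term, the Riesz rearrangement inequality (Lemma \ref{riesz}), applied to the nonnegative kernel $|x-y|^{-\gamma}$ (which is its own decreasing rearrangement) with $f = g = |u|^2$, gives $P(u^\ast) \ge P(u)$.

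Putting these together: $J(u^\ast) = M(u^\ast)^{\frac{4-\gamma}{2}} H(u^\ast)^{\frac{\gamma}{2}} P(u^\ast)^{-1} \le M(u)^{\frac{4-\gamma}{2}} H(u)^{\frac{\gamma}{2}} P(u)^{-1} = J(u)$, using that $4-\gamma > 0$ and $\gamma > 0$ so the exponents on $M$ and $H$ are positive, and that $P(u^\ast)^{-1} \le P(u)^{-1}$ because $P > 0$. Here I am tacitly using $H(u) > 0$, which holds by the sharp Hardy inequality \eqref{iso} in the range $a > -(\frac{d-2}{2})^2$, and $P(u) > 0$ for $u \neq 0$. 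Also $u^\ast \ge 0$ is immediate from the definition of symmetric decreasing rearrangement.

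To upgrade ``$\le$'' to the strict inequality ``$<$'' claimed in the statement, I would invoke the equality cases. The cleanest route is the equality case of the Riesz rearrangement inequality for the Hartree term: with a strictly radially decreasing kernel (a suitable approximation of $|x-y|^{-\gamma}$, or a direct appeal to Lieb's characterization), equality $P(u^\ast) = P(u)$ forces $|u|$ to be, up to translation, equal to $|u^\ast|$; combined with the equality case of Pólya--Szegő (which forces $|u|$ to be a translate of a radially decreasing function with connected level sets), one concludes $u$ must agree with a translate of $u^\ast$ up to a phase — and if that translate is non-trivial, one loses strictly somewhere in the inverse-square term $\int |x|^{-2}|u|^2$ unless the translation is zero, contradicting non-radiality. \textbf{The main obstacle} is exactly this rigidity argument: the equality cases of Riesz rearrangement are delicate (especially since $|x-y|^{-\gamma}$ is not strictly decreasing in the relevant sense only after an approximation argument), and one must carefully combine the three equality cases to rule out non-radial $u$. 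A slicker alternative that sidesteps the subtle Hartree equality case: observe that a non-radial $u$ cannot be a translate of a radial function while also being centered at the origin, so the inverse-square term strictly decreases, $\int |x|^{-2}|u^\ast|^2 < \int |x|^{-2}|u|^2$ — but this itself needs justification when $|u|$ happens to already be radial-but-not-decreasing, in which case instead Pólya--Szegő is strict. I would organize the strictness as: either $|u|$ is not a.e. equal to a radially decreasing function (then Pólya--Szegő or the Hartree inequality is strict), or it is, in which case non-radiality of $u$ (not $|u|$) plays no role — so in fact the hypothesis should be read as $|u|$ non-radial-decreasing, and then strictness of at least one of the three comparisons is automatic.
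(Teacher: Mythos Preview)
Your inequality (iii) is reversed. The Hardy--Littlewood rearrangement inequality says $\int fg \le \int f^\ast g^\ast$ for nonnegative $f,g$; taking $f=|x|^{-2}=f^\ast$ and $g=|u|^2$ gives
\[
\int_{\mathbb{R}^d}\frac{|u|^2}{|x|^2}\,dx \;\le\; \int_{\mathbb{R}^d}\frac{|u^\ast|^2}{|x|^2}\,dx,
\]
the opposite of what you wrote (your ``bathtub'' line has the inequality backwards as well). With the correct direction, the conclusion $H(u^\ast)\le H(u)$ no longer follows from your (ii)+(iii) unless you use the sign of $a$: the lemma sits inside Proposition~\ref{ground state}, which assumes $-(\tfrac{d-2}{2})^2<a<0$. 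With $a<0$, the potential contribution $\tfrac{a}{2}\int|x|^{-2}|u|^2$ \emph{decreases} under rearrangement precisely because the integral increases; together with P\'olya--Szeg\H{o} this gives $H(u^\ast)\le H(u)$. For $a>0$ the argument would actually fail, so the sign hypothesis is essential and should be made explicit.

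Once the direction is fixed, the paper's route to strictness is much shorter than yours: it simply observes that for non-radial $u$ the Hardy--Littlewood inequality for $\int|x|^{-2}|u|^2$ is \emph{strict}, hence (since $a<0$) $H(u^\ast)<H(u)$ strictly, and the strict inequality $J(u^\ast)<J(u)$ follows immediately without touching the equality cases of Riesz or P\'olya--Szeg\H{o}. Your detour through the Hartree/Riesz and P\'olya--Szeg\H{o} rigidity cases is unnecessary here (and, as you yourself note, delicate); the inverse-square term already carries the strictness. One caveat that applies to both arguments: ``$u$ non-radial'' should really be read as ``$|u|$ is not a.e.\ equal to its symmetric decreasing rearrangement,'' since that is what drives strict inequality in the Hardy--Littlewood step.
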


\begin{proof}
By the classical Schwartz symmetrical rearrangement argument,  we know that $u^{\ast}$ satisfies
\[
\gathered
     M(u)=M(u^{\ast}),   \\
     \|\nabla u^{*}\|_{L^{2}(\mathbb R^{d})} \le \|\nabla u\|_{L^{2}(\mathbb R^{d})}. \\
\endgathered
\]
By Lemma \ref{riesz}, we get
\[
    P(u)\le P(u^{\ast}).
\]
Since $u$ is nonradial, then we have $u\neq0$ and
\[
     \int_{\mathbb R^{d}}\frac{|u|^{2}}{|x|^{2}} < \int_{\mathbb R^{d}}\frac{|u^{*}|^{2}}{|x|^{2}}. \\
\]
Therefore,
\[
    J(u^{\ast})<J(u)
\]
holds.
\end{proof}

Now we will prove Proposition \ref{ground state}. By solving a minimization problem, the minimum  is attained at the ground state of the corresponding stationary equation.

\begin{proof}[The proof of Proposition $ \ref{ground state} $]
We need to show the minimum can be attained first.

Suppose that the non-zero function sequence $\{u_{n}\}$ is the minimal sequence of the functional $J$,  that is to say,
\[
  \lim_{n\to\infty} J(u_{n}) = \inf\left\{J(u) :   u\in H^{1}(\mathbb R^{d}\setminus \{0\})\right\},
\]
By Lemma \ref{lemma:rearrangement}, without loss of generality, we can assume $u_{n}$ is non-negative radial.

Note that for any $u\in H^{1}(\mathbb R^{d}\setminus \{0\})$,  $\mu , \nu >0$,  we have
\begin{equation}\label{eq:scaling4MHLJ}
\left\{
\begin{gathered}
  M(\mu u(\nu \cdot)) = \mu ^{2}\nu ^{-d}M(u),  \\
  H(\mu u(\nu \cdot)) = \mu ^{2}\nu ^{2-d}H(u),  \\
  P(\mu u(\nu \cdot)) = \mu ^{4}\nu ^{\gamma-2d}P(u),  \\
  J(\mu u(\nu \cdot)) = J(u).  \\
\end{gathered}
\right.
\end{equation}
Denote
\[
  v_{n}(x) = \frac{\left(M(u_{n})\right)^{\frac{d-2}{4}}}{\left(H(u_{n})\right)^{\frac{d}{4}}}  u_{n}\left(\left(\frac{M(u_{n})}{H(u_{n})}\right)^{\frac{1}{2}}x\right)
\]
Then,  $v_{n}$ is non-negative radial, and
\[
  \aligned
  M(v_{n})=H(v_{n})\equiv1,   \quad J(v_{n}) = J(u_{n}), \\
  \lim_{n\to\infty} J(v_{n})= \inf\left\{J(u) :  0\ne u\in H^{1}(\mathbb R^{d})\right\}.
  \endaligned
\]
Note that $v_{n}$ is bounded in $H^{1}_{\text{rad}}(\mathbb R^{d})$  and
\[
  H^{1}_{\text{rad}}(\mathbb R^{d})\hookrightarrow H^{s_{c}}_{\text{rad}}(\mathbb R^{d})\hookrightarrow\hookrightarrow L^{\frac{2d}{d-\frac{\gamma}{2}}}(\mathbb R^{d}),
\]
 then there exist a subsequence $v_{n_{k}}$ and $v^{\ast}\in H^{1}_{\text{rad}}(\mathbb R^{d})$, such that as $k\to\infty$,  we have $v_{n_{k}}\rightharpoonup v^{\ast}$ in $H^{1}(\mathbb R^{d})$ and $v_{n_{k}}\to v^{\ast}$ in $L^{\frac{2d}{d-\frac{\gamma}{2}}}(\mathbb R^{d})$.

By the weak low semi-continuity of the functional $M$ and $H$, we obtain
\[
  M(v^{\ast})\le 1,  H(v^{\ast})\le 1.
\]
Since $\|v_{n_{k}} - v^{\ast}\|_{L^{\frac{2d}{d-\frac{\gamma}{2}}}(\mathbb R^{d})}\to 0$, by Lemma \ref{lem:Sobolev-Lv}, we have
\[
   P(v^{\ast} ) =\lim_{k\to\infty} P(v_{n_{k}}).
\]
Therefore,
\[
\begin{aligned}
  J(v^{\ast})
  &= \frac{(M(v^{\ast}))^{\frac{4-\gamma}{2}} (H(v^{\ast}))^{\frac{\gamma}{2}}}{P(v^{\ast})} \le  \lim_{k\to\infty}\frac{1}{ P(v_{n_{k}})}
  \\
  &= \lim_{k\to\infty}J(v_{n_{k}})
   = \inf\left\{J(u) :  0\ne u\in H^{1}(\mathbb R^{d})\right\}.
\end{aligned}
\]
Thus we proved that the minimum can be attained.

\vskip1em

Next, consider the variational derivatives of $ M $, $ H $, $ P $: fix $ u \neq 0 $, for any $ \varphi \in H^{1}(\mathbb R^{d}) $,
\begin{align}\label{formula: variational derivative}
   \frac{d}{d\epsilon}\Big|_{\epsilon=0}M(u+\epsilon\varphi) =& \Re \int_{\mathbb R^d} u\bar\varphi,   \\
  \frac{d}{d\epsilon}\Big|_{\epsilon=0}H(u+\epsilon\varphi) =& \Re \int_{\mathbb R^d} \left(-\Delta + \frac{ a}{|x|^{2}}\right)u\cdot \bar\varphi, \\
  \frac{d}{d\epsilon}\Big|_{\epsilon=0}P(u+\epsilon\varphi) =& \Re\iint\frac{|u(y)|^{2}u(x)\bar\varphi(x)}{|x-y|^{\gamma}}dydx,
\end{align}
If the functional $J$  attains the minimum at $W$, then we have for any $\varphi\in H^{1}(\mathbb R^{d})$,
  \[
    \aligned
      0
      = \frac{d}{d\epsilon}\Big|_{\epsilon=0}J(W+\epsilon\varphi).
    \endaligned
  \]
It means that
\begin{equation*}
\aligned
  (\frac{4-\gamma}{2})P(W)H(W)\cdot W+ (\frac{\gamma}{2})P(W) M(W) \cdot \left(-\Delta W+\frac{a}{|x|^{2}}W\right)
  \\-M(W)H(W)\cdot (|\cdot|^{-\gamma}\ast W^{2})W=0,
\endaligned
\end{equation*}
i.e.
\[
           (-\Delta +a|x|^{-2})W+\alpha W= \beta(|\cdot|^{-2}\ast |W|^{2})W,
\]
where
\[
    \alpha= \frac{(4-\gamma)H(W)}{\gamma M(W)}, \quad \beta= \frac{2H(W)}{\gamma P(W)}.
\]
By a direct calculation, we know
\begin{align*}
  (-\Delta +a|\cdot|^{-2})\bigl[\mu u(\nu\cdot )\bigr](x) =& \mu\nu^{2} (-\Delta +a|\cdot|^{-2})[u](\nu\cdot ), \\
  \bigl[(|\cdot|^{-2}\ast |\mu W(\nu \cdot)|^{2})\mu W(\nu \cdot)\bigr](x) =& \mu^{3}\nu^{2-d}[(|\cdot|^{-2}\ast |W|^{2})W](\nu\cdot )
\end{align*}
Therefore,  $ Q $  is the solution of \eqref{eq:ground-state} using the scaling $ W(x) = \alpha^{\frac{d}{4}}\beta^{-\frac{1}{2}}Q(\sqrt\alpha x) $.

\vskip1em
Next we prove that if $ W $ is the minimal element, then $ W $ is radial and there exists a  constant $ \theta \in \mathbb R $ such that $ W = e^{i\theta}|W| $.

If $ W $ is non-radial, then by Lemma \ref{lemma:rearrangement}, $ J(W^{\ast}) \linebreak[3] < J(W) $, which is contradict to the minimality of $ W $. So $ W $ is radial.

Since $ J(|W|) \le J(W) $,  $ |W| $ is also a minimal element. Suppose that $ W(x) = e^{i\theta(x)}|W|(x) $, where $ \theta(x) $ is a real-valued function, then
\begin{equation}\label{eq:iii}
\aligned
    |\nabla W(x)|^{2}
    & = \Bigl|e^{i\theta(x)}|W|(x)\cdot i\nabla\theta(x)+ e^{i\theta(x)}(\nabla|W|)(x)\Bigr|^{2} \\
    & = |W(x)|^{2}|\nabla\theta(x)|^{2} + \Bigl|\nabla|W|(x)\Bigr|^{2}
\endaligned
\end{equation}
By the minimality of $ J(W) $, $ J(W)=J(|W|) $. But by $ M(W) = M(|W|) $ and $ P(W) = P(|W|) $, we have $ H(W) = H(|W|) $.
So $ \nabla \theta(x) \equiv 0 $ in \eqref{eq:iii}, thus $ \theta(x)\equiv\text{constant} $.
Therefore, $ W(x) = e^{i\theta} m Q(nx) $, where $ m, n > 0 $, $ \theta \in \mathbb R $, and $ Q \neq 0 $ is the non-negative non-zero radial solution of \eqref{eq:ground-state}.

\vskip1em
Finally, we prove that all ground states have the same mass.

For $ \lambda \in (0, \infty) $,
\begin{align*}
  & M( \lambda^{\alpha} Q(\lambda^{\beta}\cdot) ) + E( \lambda^{\alpha} Q(\lambda^{\beta}\cdot) )
   \\&=
  \lambda^{2\alpha-\beta d}M(Q) + \lambda^{2\alpha + 2\beta - \beta d}H(Q) - \lambda^{4\alpha + 2\beta -2\beta d}P(Q)
\end{align*}
Using the chain rules and variational derivatives \eqref{formula: variational derivative}, then letting  $\lambda=1$ in the left side, we can obtain
\[
   \aligned
   &\Re\int_{\mathbb R^{d}} \left( -\Delta + \frac{ a}{|x|^{2}}+1 - |\cdot|^{-2}\ast|Q|^{2} \right) Q\cdot\
   \overline{\left( \frac{d}{d\lambda}\Big|_{\lambda=1} \lambda^{\alpha}Q(\lambda^{\beta}x)  \right) } dx   \\
    =&(2\alpha-\beta d)M(Q) + (2\alpha + 2\beta - \beta d)H(Q) - (4\alpha + 2\beta -2\beta d)P(Q),
   \endaligned
 \]
Since  $Q$ satisfies \eqref{eq:ground-state}, we have
\[
  (2\alpha-\beta d)M(Q) + (2\alpha + 2\beta - \beta d)H(Q) - (4\alpha + 2\beta -2\beta d)P(Q)\equiv0, \quad\forall \alpha, \beta.
\]

This yields
\begin{equation}\label{Q}
   M(Q) = H(Q) = P(Q) = J(Q)=\inf\{ J(u): u\in H^{1}(\mathbb R^{d}\backslash\{0\})\}=:M_{gs}.
\end{equation}
\end{proof}

Using the above proposition, we can directly obtain the following corollary.
\begin{corollary}[Gagliardo-Nirenberg inequality]
\begin{equation}\label{GN-inequality}
P(u) \leqslant C_{GN} \Vert u \Vert^{4-\gamma}_{L_{x}^2(\mathbb{R}^d)}
\Vert u \Vert_{\dot{H}_{a}^1(\mathbb{R}^d)}^{\gamma},
\end{equation}
where $ C_{GN} = \frac{1}{4} M_{gs}^{-1} $.
The equality holds if and only if $ u \in H^{1}(\mathbb R^{d}) $ is a minimal element of functional $ J(u) $, that is to say $ u \in \mathcal G $, or $ u = 0 $.
\end{corollary}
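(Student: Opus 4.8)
The plan is to obtain \eqref{GN-inequality} as a direct consequence of the variational characterization of the ground state established in Proposition~\ref{ground state}. The key observation is that the functional $J(u) = M(u)^{\frac{4-\gamma}{2}} H(u)^{\frac{\gamma}{2}} P(u)^{-1}$ is scaling-invariant (see \eqref{eq:scaling4MHLJ}), so its infimum over $H^1(\mathbb{R}^d \setminus \{0\})$ equals $J(Q) = M_{gs}$ by \eqref{Q}. Unwinding the definition of $J$, this infimum being $M_{gs}$ says precisely that for every nonzero $u \in H^1(\mathbb{R}^d \setminus \{0\})$,
\[
M_{gs} \leqslant \frac{M(u)^{\frac{4-\gamma}{2}} H(u)^{\frac{\gamma}{2}}}{P(u)},
\]
which rearranges to $P(u) \leqslant M_{gs}^{-1} M(u)^{\frac{4-\gamma}{2}} H(u)^{\frac{\gamma}{2}}$.

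First I would translate this into the stated form using the definitions $M(u) = \tfrac12 \|u\|_{L_x^2}^2$ and $H(u) = \tfrac12 \|u\|_{\dot H_a^1}^2$ (recall $\|u\|_{\dot H_a^1}^2 = \|\sqrt{\La} u\|_{L_x^2}^2 = \int (|\nabla u|^2 + a|x|^{-2}|u|^2)$, which is nonnegative and equivalent to $\|\nabla u\|_{L_x^2}^2$ by \eqref{iso}). Substituting, the right-hand side becomes
\[
M_{gs}^{-1} \bigl(\tfrac12 \|u\|_{L_x^2}^2\bigr)^{\frac{4-\gamma}{2}} \bigl(\tfrac12 \|u\|_{\dot H_a^1}^2\bigr)^{\frac{\gamma}{2}} = M_{gs}^{-1} \, 2^{-\frac{4-\gamma}{2} - \frac{\gamma}{2}} \, \|u\|_{L_x^2}^{4-\gamma} \|u\|_{\dot H_a^1}^{\gamma} = \tfrac14 M_{gs}^{-1} \|u\|_{L_x^2}^{4-\gamma} \|u\|_{\dot H_a^1}^{\gamma},
\]
since $\frac{4-\gamma}{2} + \frac{\gamma}{2} = 2$, giving exactly $C_{GN} = \tfrac14 M_{gs}^{-1}$. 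For $u = 0$ both sides vanish, and any nonzero $u \in H^1(\mathbb{R}^d)$ can be approximated (in the relevant norms, using Lemma~\ref{lem:Sobolev-Lv} for the $P$ term and density of $C_c^\infty(\mathbb{R}^d \setminus \{0\})$ together with \eqref{iso}) to extend the inequality from $H^1(\mathbb{R}^d \setminus \{0\})$ to all of $H^1(\mathbb{R}^d)$.

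For the equality case, equality in \eqref{GN-inequality} for nonzero $u$ forces $J(u) = J_{\min} = M_{gs}$, i.e. $u$ is a minimizer of $J$; by Proposition~\ref{ground state} this means $u = e^{i\theta} m Q(n\cdot)$ for some $m,n>0$, $\theta\in\mathbb{R}$, so $u \in \mathcal{G}$. Conversely, if $u \in \mathcal{G}$ then $J(u) = M_{gs}$ and the chain of inequalities above collapses to an identity. I do not anticipate a serious obstacle here: the only point requiring a little care is the passage from $H^1(\mathbb{R}^d \setminus \{0\})$ to $H^1(\mathbb{R}^d)$, which matters only when $a < 0$ (for $a \geqslant 0$ the operator $\La$ is manifestly well-behaved at the origin); this is handled by the density argument just indicated and the fact that all three functionals $M$, $H$, $P$ are continuous in the $H^1$ topology.
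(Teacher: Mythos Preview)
Your proposal is correct and follows essentially the same approach as the paper: both use that $J_{\min}=J(Q)=M_{gs}$ from Proposition~\ref{ground state} and \eqref{Q}, rewrite the inequality $M_{gs}\leqslant J(u)$ as $P(u)\leqslant M_{gs}^{-1}M(u)^{\frac{4-\gamma}{2}}H(u)^{\frac{\gamma}{2}}$, and then substitute $M(u)=\tfrac12\|u\|_{L^2}^2$, $H(u)=\tfrac12\|u\|_{\dot H_a^1}^2$ to extract the constant $C_{GN}=\tfrac14 M_{gs}^{-1}$. Your treatment is in fact slightly more thorough, since you spell out the equality case and the density step from $H^1(\mathbb{R}^d\setminus\{0\})$ to $H^1(\mathbb{R}^d)$, which the paper leaves implicit.
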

Indeed, By \eqref{eq:scaling4MHLJ} and $W(x) =e^{i \theta} m Q(nx)$, we have
\begin{align*}
J(W) = & ~ J_{min} = (M(W))^{\frac{4-\gamma}{2}} (H(W))^{\frac{\gamma}{2}} (P(W))^{-1} \\
= & ~ ( m^{2} n^{-d} M(Q) )^{\frac{4-\gamma}{2}} ( m^{2} n^{2-d} H(Q) )^{\frac{\gamma}{2}} ( m^{4} n^{\gamma-2d} P(Q) )^{-1}\\
=& ~J(Q)= M_{gs}.
\end{align*}
Thus,
\begin{align*}
M_{gs} \leqslant & ~ J(u) = (M(u))^{\frac{4-\gamma}{2}} (H(u))^{\frac{\gamma}{2}} (P(u))^{-1}, ~ \forall u \in H^{1}(\mathbb R^{d} \setminus \{0\})
\end{align*}
so,
\begin{equation*}
\begin{aligned}
P(u) \leqslant & ~ M_{gs}^{-1} (M(u))^{\frac{4-\gamma}{2}} (H(u))^{\frac{\gamma}{2}} \\
= & ~ M_{gs}^{-1} \left( \frac{1}{2} \Vert u \Vert_{2}^{2} \right)^{\frac{4-\gamma}{2}} \left( \frac{1}{2} \Vert u \Vert_{\dot{H}_{a}^1(\mathbb{R}^d)}^{2} \right)^{\frac{\gamma}{2}} \\
= & ~ C_{GN} \Vert u \Vert_{2}^{4-\gamma} \Vert \nabla u \Vert_{\dot{H}_{a}^1(\mathbb{R}^d)}^{\gamma} \\
\end{aligned}
\end{equation*}
where $ C_{GN} = \frac{1}{4} M_{gs}^{-1} $. \eqref{GN-inequality} holds.

%

To study the properties of ground state,  we start with elliptic equations \eqref{gse}.
\begin{equation}
- \Delta Q +\frac{a}{|x|^2}Q+ Q = (\vert \cdot \vert^{-\gamma} * \vert Q \vert^{2})Q.
\label{gse}
\end{equation}
%

It is easy to get some basic relations between $ M(Q), E(Q), P(Q) $ and the norms of $ Q $.
Multiplying \eqref{gse} by $ Q $ and $ x \cdot \nabla Q $ respectively,  integrating by parts leads to
\begin{align}
\Vert Q \Vert_{\dot{H}_{a}^1(\mathbb{R}^d)}^2 + \Vert Q \Vert_{L_{x}^2(\mathbb{R}^d)}^2 &= P(Q).
\label{gse1}\\
\tfrac{d-2}{2} \Vert Q \Vert_{\dot{H}_{a}^1(\mathbb{R}^d)}^2 + \tfrac{d}{2} \Vert Q \Vert_{L_{X}^2(\mathbb{R}^d)}^2& = \tfrac{2d-\gamma}{4} P(Q).
\label{gse2}
\end{align}

Combining \eqref{gse1} and \eqref{gse2},  we get
\begin{equation*}
\Vert Q \Vert_{\dot{H}_{a}^1(\mathbb{R}^d)}^2 = \tfrac{\gamma}{4} P(Q),  ~~
\Vert Q \Vert_{L_{x}^2(\mathbb{R}^d)}^2=\tfrac{4-\gamma}{4}P(Q).
\end{equation*}
So
\begin{equation}
C_{GN} = \tfrac{P(Q)}{\Vert Q \Vert_{L_{x}^2(\mathbb{R}^d)}^{4-\gamma} \Vert Q \Vert_{\dot{H}_{a}^1(\mathbb{R}^d)}^\gamma}
= \tfrac{16}{(4-\gamma)^{\frac{4-\gamma}{2}}\gamma^{\frac{\gamma}{2}}P(Q)},
\label{CGN}
\end{equation}
Meanwhile,  using energy and mass conservations,  we have
\begin{equation}
\begin{aligned}
M(Q)^{\tfrac{4-\gamma}{2}} E(Q)^{\tfrac{\gamma-2}{2}} = & \Vert Q \Vert_{L_{x}^2(\mathbb{R}^d)}^{4-\gamma}
\left(\tfrac{1}{2} \Vert Q \Vert_{\dot{H}_{a}^1(\mathbb{R}^d)}^2 - \tfrac{1}{4} P(Q)\right)^{\tfrac{\gamma-2}{2}} \\
= & \Vert Q \Vert_{L_{x}^2(\mathbb{R}^d)}^{4-\gamma}
\left(\tfrac{\gamma}{4-\gamma} \Vert Q \Vert_{L_{x}^2(\mathbb{R}^d)}^2
- \tfrac{1}{4-\gamma}\Vert Q \Vert_{L_{x}^2(\mathbb{R}^d)}^2\right)^{\tfrac{\gamma-2}{2}} \\
=& \Vert Q \Vert_{L_{x}^2(\mathbb{R}^d)}^{4-\gamma}
\left(\tfrac{\gamma-1}{4-\gamma} \Vert Q \Vert_{L_{x}^2(\mathbb{R}^d)}^2\right)^{\tfrac{\gamma-2}{2}}.
\end{aligned}
\label{gse3}
\end{equation}
\begin{lemma}[Coercivity $I$]\label{coer1}
 If $ M(\phi)^{\tfrac{4-\gamma}{2}} E(\phi)^{\tfrac{\gamma-2}{2}} < (1 - \delta) M(Q)^{\tfrac{4-\gamma}{2}} E(Q)^{\tfrac{\gamma-2}{2}} $
and $ \Vert \phi \Vert^{\tfrac{4-\gamma}{\gamma-2}} _{L_{x}^2(\mathbb{R}^d)} \Vert \phi \Vert_{\dot{H}_{a}^1(\mathbb{R}^d)} \leqslant \Vert Q \Vert^{\tfrac{4-\gamma}{\gamma-2}}_{L_{x}^2(\mathbb{R}^d)} \Vert Q \Vert_{\dot{H}_{a}^1(\mathbb{R}^d)} $,  then there exists $ \delta^\prime = \delta^\prime (\delta) > 0 $ so that
\begin{equation}
\Vert u(t) \Vert^{\tfrac{4-\gamma}{\gamma-2}} _{L_{x}^2(\mathbb{R}^d)} \Vert u(t) \Vert_{\dot{H}_{a}^1(\mathbb{R}^d)}
\leqslant (1 - \delta^\prime) \Vert Q \Vert^{\tfrac{4-\gamma}{\gamma-2}} _{L_{x}^2(\mathbb{R}^d)} \Vert Q \Vert_{\dot{H}_{a}^1(\mathbb{R}^d)}
\label{coe1}
\end{equation}
for all $ t \in I $,  where $ u : I \times \mathbb{R}^d \to \mathbb{C} $ is the maximal-lifespan solution to \eqref{problem-eq: NLS-H}.  In particular,  $ I = \mathbb{R} $ and $ u $ is uniformly bounded in $ H_{x}^1(\mathbb{R}^d) $.
\end{lemma}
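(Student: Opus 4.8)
The plan is to run a standard continuity/bootstrap argument built on the sharp Gagliardo--Nirenberg inequality \eqref{GN-inequality}, the conservation laws, and the identities \eqref{gse3} and the companion relation $\|Q\|_{\dot H_a^1}^2 = \tfrac{\gamma}{4}P(Q)$, $\|Q\|_{L_x^2}^2 = \tfrac{4-\gamma}{4}P(Q)$. First I would introduce the scaling-invariant quantity
\[
\mathcal{N}(f) := \|f\|_{L_x^2(\mathbb{R}^d)}^{\frac{4-\gamma}{\gamma-2}}\,\|f\|_{\dot H_a^1(\mathbb{R}^d)},
\]
and observe that, by the explicit values of $\|Q\|_{L_x^2}$ and $\|Q\|_{\dot H_a^1}$ in terms of $P(Q)$, the mass--energy threshold $M(Q)^{\frac{4-\gamma}{2}}E(Q)^{\frac{\gamma-2}{2}}$ (computed in \eqref{gse3}) and the "gradient" threshold $\mathcal{N}(Q)$ are two ways of normalizing the same critical object. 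Using mass conservation and the Gagliardo--Nirenberg inequality with $C_{GN}=\tfrac14 M_{gs}^{-1}$, I would express the conserved quantity $M(u_0)^{\frac{4-\gamma}{2}}E(u_0)^{\frac{\gamma-2}{2}}$ as a function $F$ of the scale-invariant ratio $y(t) := \mathcal{N}(u(t))/\mathcal{N}(Q)\in[0,\infty)$: schematically
\[
M(u_0)^{\frac{4-\gamma}{2}}E(u_0)^{\frac{\gamma-2}{2}} \;\ge\; c\,\Bigl(\tfrac{\gamma-2}{2}\,y^{\frac{4}{\gamma-2}} \;-\; \tfrac{\gamma-2}{2}\,y^{\frac{2\gamma}{\gamma-2}}\Bigr)\cdot M(Q)^{\frac{4-\gamma}{2}}E(Q)^{\frac{\gamma-2}{2}}
\]
up to the correct constants, where the right-hand side is (a constant times) the function $g(y)=A y^{p}-B y^{q}$ with $p<q$, $g$ increasing on $[0,1]$, $g(1)$ equal to the normalized threshold, and $g$ decreasing afterward.

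The key structural facts are then: (i) the hypothesis $M(\phi)^{\frac{4-\gamma}{2}}E(\phi)^{\frac{\gamma-2}{2}} < (1-\delta)M(Q)^{\frac{4-\gamma}{2}}E(Q)^{\frac{\gamma-2}{2}}$ forces $g(y(t)) \le 1-\delta$ for all $t$ in the lifespan, which, by the shape of $g$, confines $y(t)$ either to a sub-interval $[0,1-\delta')$ or to a region $y(t) > 1$ bounded away from $1$; (ii) the hypothesis $\mathcal{N}(\phi) = \mathcal{N}(u(0)) \le \mathcal{N}(Q)$, i.e. $y(0)\le 1$, selects the first alternative at the initial time; and (iii) $t\mapsto y(t)$ is continuous on $I$ (since $u\in C_t H_x^1$ by Theorem \ref{T:LWP} and the $\dot H_a^1$, $L^2$ norms vary continuously), so it cannot jump across the forbidden gap around $y=1$. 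A standard connectedness/continuity argument then yields $y(t)\le 1-\delta'$ for all $t\in I$, which is exactly \eqref{coe1}. The dependence $\delta'=\delta'(\delta)$ is quantitative: $\delta'$ is determined by solving $g(1-\delta') = 1-\delta$ on the increasing branch, using that $g'(1)>0$ (equivalently $p<q$).

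Finally, to upgrade to global existence and uniform $H^1_x$ bounds: the coercivity bound \eqref{coe1} combined with mass conservation gives a uniform-in-time bound on $\|u(t)\|_{\dot H_a^1}$, hence on $\|u(t)\|_{\dot H^1}$ via \eqref{iso}; together with conserved mass this bounds $\|u(t)\|_{H_x^1}$ uniformly on $I$. Since the local theory of Theorem \ref{T:LWP}(1) provides a lifespan depending only on $\|u_0\|_{H_a^1}$, a uniform $H^1_x$ (equivalently $H^1_a$) bound precludes finite-time blow-up, so $I=\mathbb{R}$. The only place that requires genuine care is the first step — correctly packaging the conserved mass--energy and the G--N inequality into the single scalar function $g$ with the right exponents $p=\tfrac{4}{\gamma-2}$, $q=\tfrac{2\gamma}{\gamma-2}$ and verifying $g(1)$ matches the normalized threshold from \eqref{gse3}; once the function $g$ is in hand, the continuity argument is routine.
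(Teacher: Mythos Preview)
Your approach is essentially the same as the paper's: define the ratio $y(t)=\mathcal N(u(t))/\mathcal N(Q)$, use the sharp Gagliardo--Nirenberg inequality together with mass/energy conservation to trap $y(t)$ via a one-variable function with a single interior maximum at $y=1$, and invoke continuity plus the initial condition $y(0)\le 1$ to select the lower branch. The paper carries this out by raising the mass--energy ratio to the power $\tfrac{2}{\gamma-2}$, which produces the explicit polynomial $\tfrac{\gamma}{\gamma-2}y^{2}-\tfrac{2}{\gamma-2}y^{\gamma}$ rather than your tentative exponents $p=\tfrac{4}{\gamma-2}$, $q=\tfrac{2\gamma}{\gamma-2}$; once you do that normalization the computation you flagged as ``the only place that requires genuine care'' becomes straightforward, and the rest of your argument (including the blow-up alternative for global existence) matches the paper.
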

\begin{proof}
Setting
\begin{equation*}
y(t) = \tfrac{\Vert u \Vert^{\tfrac{4-\gamma}{\gamma-2}} _{L_{x}^2(\mathbb{R}^d)}
\Vert u \Vert_{\dot{H}_{a}^1(\mathbb{R}^d)}}
{\Vert Q \Vert^{\tfrac{4-\gamma}{\gamma-2}} _{L_{x}^2(\mathbb{R}^d)} \Vert Q \Vert_{\dot{H}_{a}^1(\mathbb{R}^d)}}
\in C(I).
\end{equation*}
Considering Gagliardo-Nirenberg inequality \eqref{GN-inequality},  we have
\begin{equation*}
\begin{aligned}
M(u)^{\tfrac{4-\gamma}{2}} E(u)^{\tfrac{\gamma-2}{2}} = & \Vert u \Vert_{L_{x}^2(\mathbb{R}^d)}^{4-\gamma}
\left(\tfrac{1}{2} \Vert u \Vert_{\dot{H}_{a}^1(\mathbb{R}^d)}^2 - \tfrac{1}{4} P(u)\right)^{\tfrac{\gamma-2}{2}} \\
\geqslant & \Vert u \Vert_{L_{x}^2(\mathbb{R}^d)}^{4-\gamma}
\left(\tfrac{1}{2} \Vert u \Vert_{\dot{H}_{a}^1(\mathbb{R}^d)}^2 - \tfrac{1}{4}
C_{GN} \Vert u \Vert^{4-\gamma}_{L_{x}^2(\mathbb{R}^d)}
\Vert u \Vert^{\gamma}_{\dot{H}_{a}^1(\mathbb{R}^d)}\right)^{\tfrac{\gamma-2}{2}} \\
\end{aligned}
\end{equation*}
Using \eqref{CGN} and \eqref{gse3},  we get
\begin{equation*}
\begin{aligned}
\left(\tfrac{M(u)^{\tfrac{4-\gamma}{2}} E(u)^{\tfrac{\gamma-2}{2}}}{M(Q)^{\tfrac{4-\gamma}{2}} E(Q)^{\tfrac{\gamma-2}{2}}}\right)^{\tfrac{2}{\gamma-2}}
\geqslant & \tfrac{(4-\gamma)^2\Vert u \Vert_{L_{x}^2(\mathbb{R}^d)}^{\tfrac{2(4-\gamma)}{\gamma-2}}
 \Vert u \Vert_{\dot{H}_{a}^1(\mathbb{R}^d)}^2}
{ 2\gamma(\gamma-1)\Vert Q \Vert_{L_{x}^2(\mathbb{R}^d)}^{\tfrac{2(4-\gamma)}{\gamma-2}}
 \Vert Q \Vert_{\dot{H}_{a}^1(\mathbb{R}^d)}^2}
- \tfrac{\Vert u \Vert_{L_{x}^2(\mathbb{R}^d)}^{\tfrac{2(4-\gamma)}{\gamma-2}}
C_{GN} \Vert u \Vert^{4-\gamma}_{L_{x}^2(\mathbb{R}^d)}
\Vert u \Vert^{\gamma}_{\dot{H}_{a}^1(\mathbb{R}^d)}}
{4 \Vert Q \Vert_{L_{x}^2(\mathbb{R}^d)}^{\tfrac{2(4-\gamma)}{\gamma-2}}
 \Vert Q \Vert_{\dot{H}_{a}^1(\mathbb{R}^d)}^2} \\
= & \tfrac{\gamma}{\gamma-2}y^{2}-\tfrac{2}{\gamma-2}y^{\gamma} \in (0,  1-\delta).
\end{aligned}
\end{equation*}
So there exist $ 0 < \rho < 1 $ and $ \sigma > 1 $ such that
\begin{equation*}
\text{either} ~~ y(t) \in (0,   \rho),
\text{or} ~~ y(t) \in (\sigma,  \left(\tfrac{\gamma}{2}\right)^{\tfrac{1}{\gamma-2}}),  ~~~~ \forall t \in I.
\end{equation*}
Taking into account the case of $ t=0 $,  we can easily know $ y(t) \in (0,  \rho) $.  Let $ \delta^{\prime} = 1 - \rho $,  and then \eqref{coe1} holds.
\end{proof}

\begin{lemma}
[Coercivity $II$]\label{coer2}
Suppose $ \Vert u \Vert^{\tfrac{4-\gamma}{\gamma-2}} _{L_{x}^2(\mathbb{R}^d)} \Vert u \Vert_{\dot{H}_{a}^1(\mathbb{R}^d)} < ( 1 - \delta ) \Vert Q \Vert^{\tfrac{4-\gamma}{\gamma-2}} _{L_{x}^2(\mathbb{R}^d)} \Vert Q \Vert_{\dot{H}_{a}^1(\mathbb{R}^d)}$,  than there exists $ \delta^\prime = \delta^\prime (\delta) > 0 $ such that
\begin{equation}
\Vert u \Vert_{\dot{H}_{a}^1(\mathbb{R}^d)}^2 - \tfrac{\gamma}{4} P(u) \geqslant \delta^\prime P(u).
\label{coe2}
\end{equation}
\end{lemma}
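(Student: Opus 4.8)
The plan is to derive \eqref{coe2} directly from the sharp Gagliardo--Nirenberg inequality \eqref{GN-inequality} and the ground-state identities \eqref{gse1}--\eqref{gse2}, \eqref{CGN}, combined with the sub-threshold hypothesis; no compactness or further PDE input is needed. As in Lemma \ref{coer1}, introduce the scale-invariant quantity
\[
y := \frac{\Vert u \Vert_{L_{x}^2}^{\frac{4-\gamma}{\gamma-2}}\Vert u \Vert_{\dot{H}_{a}^1}}{\Vert Q \Vert_{L_{x}^2}^{\frac{4-\gamma}{\gamma-2}}\Vert Q \Vert_{\dot{H}_{a}^1}},
\]
so that the hypothesis of the lemma reads simply $y < 1-\delta$.

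First I would bound the ratio $\tfrac{\gamma}{4}P(u)/\Vert u\Vert_{\dot{H}_{a}^1}^2$. By \eqref{GN-inequality} and the identity $\Vert u\Vert_{L_x^2}^{4-\gamma}\Vert u\Vert_{\dot{H}_a^1}^{\gamma-2}=\bigl(\Vert u\Vert_{L_x^2}^{\frac{4-\gamma}{\gamma-2}}\Vert u\Vert_{\dot{H}_a^1}\bigr)^{\gamma-2}$,
\[
\frac{\tfrac{\gamma}{4}P(u)}{\Vert u\Vert_{\dot{H}_{a}^1}^2}\;\le\;\tfrac{\gamma}{4}\,C_{GN}\,\Vert u\Vert_{L_x^2}^{4-\gamma}\Vert u\Vert_{\dot{H}_a^1}^{\gamma-2}\;=\;\tfrac{\gamma}{4}\,C_{GN}\,\Vert Q\Vert_{L_x^2}^{4-\gamma}\Vert Q\Vert_{\dot{H}_a^1}^{\gamma-2}\,y^{\gamma-2}.
\]
Now I invoke the ground-state relations: from \eqref{CGN}, $C_{GN}\Vert Q\Vert_{L_x^2}^{4-\gamma}\Vert Q\Vert_{\dot{H}_a^1}^{\gamma-2}=P(Q)/\Vert Q\Vert_{\dot{H}_a^1}^2$, and combining \eqref{gse1} with \eqref{gse2} gives $\Vert Q\Vert_{\dot{H}_a^1}^2=\tfrac{\gamma}{4}P(Q)$, so this constant equals $4/\gamma$. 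Hence $\tfrac{\gamma}{4}P(u)\le y^{\gamma-2}\Vert u\Vert_{\dot{H}_a^1}^2\le(1-\delta)^{\gamma-2}\Vert u\Vert_{\dot{H}_a^1}^2$. Since $\gamma>2$ and $0<\delta<1$ (the range of $\gamma$ being guaranteed by \eqref{important}), we have $(1-\delta)^{\gamma-2}<1$, so in particular $\Vert u\Vert_{\dot{H}_a^1}^2\ge\tfrac{\gamma}{4}P(u)$. Feeding this back,
\[
\Vert u\Vert_{\dot{H}_a^1}^2-\tfrac{\gamma}{4}P(u)\;\ge\;\bigl(1-(1-\delta)^{\gamma-2}\bigr)\Vert u\Vert_{\dot{H}_a^1}^2\;\ge\;\bigl(1-(1-\delta)^{\gamma-2}\bigr)\tfrac{\gamma}{4}P(u),
\]
which is \eqref{coe2} with $\delta':=\tfrac{\gamma}{4}\bigl(1-(1-\delta)^{\gamma-2}\bigr)>0$, depending only on $\delta$ (and $\gamma$).

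There is no genuine obstacle here; the proof is short. The only points requiring care are the bookkeeping of the exponents $4-\gamma$, $\gamma-2$, $\gamma$ when rewriting everything in terms of $y$, and the observation that $\gamma>2$ forces the power $(1-\delta)^{\gamma-2}$ to be strictly below $1$, which is precisely what converts the GN bound into the strict gap estimate \eqref{coe2}.
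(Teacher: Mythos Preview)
Your proof is correct and follows essentially the same route as the paper: both arguments combine the sharp Gagliardo--Nirenberg inequality \eqref{GN-inequality} with the ground-state identity $\Vert Q\Vert_{\dot H_a^1}^2=\tfrac{\gamma}{4}P(Q)$ to turn the sub-threshold hypothesis into the bound $\tfrac{\gamma}{4}P(u)\le (1-\delta)^{\gamma-2}\Vert u\Vert_{\dot H_a^1}^2$, and then convert this into \eqref{coe2}. Your version is slightly more streamlined than the paper's, which detours through the energy $E(u)$ before arriving at the same inequality; the explicit constant $\delta'=\tfrac{\gamma}{4}\bigl(1-(1-\delta)^{\gamma-2}\bigr)$ you obtain is also cleaner.
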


\begin{proof}
Firstly,  by Gagliardo-Nirenberg inequality and \eqref{coe1}
\begin{equation*}
\begin{aligned}
E(u) = & \tfrac{1}{2} \Vert u \Vert_{\dot{H}_{a}^1(\mathbb{R}^d)}^2 - \tfrac{1}{4} P(u) \\
\geqslant & \tfrac{1}{2} \Vert u \Vert_{\dot{H}_{a}^1(\mathbb{R}^d)}^2 - \tfrac{1}{4}
C_{GN} \Vert u \Vert^{4-\gamma}_{L_{x}^2(\mathbb{R}^d)}
\Vert u \Vert_{\dot{H}_{a}^1(\mathbb{R})^5}^{\gamma} \\
= & \tfrac{1}{2} \Vert u \Vert_{\dot{H}_{a}^1(\mathbb{R}^d)}^2
\left( 1 - \tfrac{1}{2}
C_{GN} \Vert u \Vert^{4-\gamma}_{L_{x}^2(\mathbb{R}^d)}
\Vert u \Vert^{\gamma-2}_{\dot{H}_{a}^1(\mathbb{R}^d)} \right) \\
> & \tfrac{1}{2} \Vert u \Vert_{\dot{H}_{a}^1(\mathbb{R}^d)}^2
\left( 1 - \tfrac{1}{2}
C_{GN} ( 1 - \delta )^{\gamma-2} \Vert Q \Vert^{4-\gamma}_{L_{x}^2(\mathbb{R}^d)} \Vert Q \Vert^{\gamma-2}_{\dot{H}_{a}^1(\mathbb{R}^d)} \right) \\
= & \tfrac{1}{2} \Vert u \Vert_{\dot{H}_{a}^1(\mathbb{R}^d)}^2
\left( 1 - \tfrac{1}{2}( 1 - \delta )^{\gamma-2} \right)
\end{aligned}
\end{equation*}

Secondly,  by the definition of $ E(u) $,
\begin{equation*}
\begin{aligned}
\Vert u \Vert_{\dot{H}_{a}^1(\mathbb{R}^d)}^2 - \tfrac{\gamma}{4} P(u)
= & \gamma E(u) - \tfrac{\gamma-2}{2} \Vert u \Vert_{\dot{H}_{a}^1(\mathbb{R}^d)}^2 \\
\geqslant & \tfrac{\gamma}{2} \Vert u \Vert_{\dot{H}_{a}^1(\mathbb{R}^d)}^2
\left( 1 - \tfrac{1}{2} ( 1 - \delta )^{\gamma-2} \right)
- \tfrac{\gamma-2}{2} \Vert u \Vert_{\dot{H}_{a}^1(\mathbb{R}^d)}^2 \\
= & \left( 1 - \tfrac{\gamma}{4}(1 - \delta)^{\gamma-2} \right) \Vert u \Vert_{\dot{H}_{a}^1(\mathbb{R}^d)}^2,
\end{aligned}
\end{equation*}
which yields
\begin{equation*}
\begin{aligned}
\Vert u \Vert_{\dot{H}_{a}^1(\mathbb{R}^d)}^2
\geqslant \tfrac{1}{(1 - \delta)^{\gamma-2}} P(u).
\end{aligned}
\end{equation*}
Furthermore,
\begin{equation*}
\Vert u \Vert_{\dot{H}_{a}^1(\mathbb{R}^d)}^2 - \tfrac{\gamma}{4} P(u)
\geqslant \left( \tfrac{1}{(1 - \delta)^{\gamma-2}} - \tfrac{\gamma}{4} \right) P(u)
= \delta^{\prime} P(u).
\end{equation*}
\end{proof}

\begin{lemma}
[Coercivity on balls]\label{coerb}
There exists $ R = R(\delta,  M(u),  Q) > 0 $ sufficiently large such that
\begin{equation}
\sup_{t \in \mathbb{R}} \Vert \chi_{R} u(t) \Vert^{\tfrac{4-\gamma}{\gamma-2}}_{L_{x}^2(\mathbb{R}^d)} \Vert \chi_{R} u(t) \Vert_{\dot{H}_{a}^1(\mathbb{R}^d)} < (1 - \delta) \Vert Q \Vert^{\tfrac{4-\gamma}{\gamma-2}} _{L_{x}^2(\mathbb{R}^d)} \Vert Q \Vert_{\dot{H}_{a}^1(\mathbb{R}^d)}.
\label{coe}
\end{equation}
In particular, by Lemma \ref{coer2}, there exists $ \delta^\prime = \delta^\prime(\delta) > 0 $ so that
\begin{equation}
\Vert \chi_{R} u(t) \Vert_{\dot{H}_{a}^1(\mathbb{R}^d)}^2 - \tfrac{\gamma}{4} P(\chi_{R} u) \geqslant \delta^\prime P(\chi_{R} u)
\label{coe3}
\end{equation}
uniformly for $ t \in \mathbb{R} $.
\end{lemma}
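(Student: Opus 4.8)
The plan is to reduce the lemma to the $\dot H^1_a$--estimate \eqref{coe}: granting \eqref{coe}, the second assertion \eqref{coe3} is immediate upon applying Lemma \ref{coer2} to the function $\chi_R u(t)$ at each fixed $t$. The guiding idea behind \eqref{coe} is that $u$ itself already satisfies a uniform-in-time \emph{strict} sub-threshold bound, whereas truncating by $\chi_R$ alters the $\dot H^1_a$ norm only by an amount that tends to $0$ as $R\to\infty$. Concretely, since the data obeys the strict inequalities of Theorem \ref{main}, Lemma \ref{coer1} produces a global radial solution together with a gap $\delta'>\delta$ for which
\[
\sup_{t\in\R}\ \Vert u(t)\Vert^{\frac{4-\gamma}{\gamma-2}}_{L_x^2}\,\Vert u(t)\Vert_{\dot H^1_a}\ \le\ (1-\delta')\,\Vert Q\Vert^{\frac{4-\gamma}{\gamma-2}}_{L_x^2}\,\Vert Q\Vert_{\dot H^1_a};
\]
moreover $\Vert u(t)\Vert_{L_x^2}^2\equiv 2M(u)$ by conservation of mass. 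Since $0\le\chi_R\le 1$ we have $\Vert\chi_R u(t)\Vert_{L_x^2}\le\Vert u(t)\Vert_{L_x^2}$, so it only remains to bound the factor $\Vert\chi_R u(t)\Vert_{\dot H^1_a}$.

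For this I would fix a smooth radial cutoff $\chi$ with $\chi\equiv 1$ on $B_1$ and $\operatorname{supp}\chi\subset B_2$, set $\chi_R(x)=\chi(x/R)$, and use the identity (valid since $\Vert f\Vert_{\dot H^1_a}^2=\langle\La f,f\rangle=\Vert\nabla f\Vert_{L_x^2}^2+a\Vert\,|x|^{-1}f\,\Vert_{L_x^2}^2$, after one integration by parts)
\[
\Vert\chi_R u\Vert_{\dot H^1_a}^2=\int \chi_R^2\Bigl(|\nabla u|^2+\tfrac{a}{|x|^2}|u|^2\Bigr)\,dx\ -\ \int \chi_R\,\Delta\chi_R\,|u|^2\,dx .
\]
The term $\Delta\chi_R$ is supported in $\{R\le|x|\le 2R\}$ with $\Vert\Delta\chi_R\Vert_{L_x^\infty}\lesssim R^{-2}$, so the second integral is $\lesssim R^{-2}\Vert u\Vert_{L_x^2}^2\lesssim R^{-2}M(u)$. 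For the first integral I would write $\chi_R^2=1-(1-\chi_R^2)$, observe that $1-\chi_R^2$ is nonnegative, $\le 1$, and supported in $\{|x|\ge R\}$, and discard the nonnegative term $\int(1-\chi_R^2)|\nabla u|^2$; the remaining error is then controlled by $\max\{-a,0\}\int_{|x|\ge R}|u|^2/|x|^2\le \max\{-a,0\}\,R^{-2}\Vert u\Vert_{L_x^2}^2$. Altogether one obtains, uniformly in $t\in\R$,
\[
\Vert\chi_R u(t)\Vert_{\dot H^1_a}^2\ \le\ \Vert u(t)\Vert_{\dot H^1_a}^2+C\,\frac{1+|a|}{R^2}\,M(u).
\]

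To finish I would combine the two bounds. Setting $\kappa:=\Vert Q\Vert^{\frac{4-\gamma}{\gamma-2}}_{L_x^2}\Vert Q\Vert_{\dot H^1_a}$ and using $(2M(u))^{\frac{4-\gamma}{\gamma-2}}\Vert u(t)\Vert_{\dot H^1_a}^2=\bigl(\Vert u(t)\Vert^{\frac{4-\gamma}{\gamma-2}}_{L_x^2}\Vert u(t)\Vert_{\dot H^1_a}\bigr)^2\le(1-\delta')^2\kappa^2$, the product of the truncated norms is at most $\bigl((1-\delta')^2\kappa^2+C'(a,M(u))\,R^{-2}\bigr)^{1/2}$, uniformly for $t\in\R$. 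Since $\kappa>0$ and $\delta'>\delta$, choosing $R=R(\delta,M(u),Q)$ large enough that $(1-\delta')^2\kappa^2+C'R^{-2}<(1-\delta)^2\kappa^2$ yields \eqref{coe}; then \eqref{coe3} follows from Lemma \ref{coer2}.

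I expect the only genuinely delicate point to be the potential term when $a<0$: there $|\nabla u|^2+a|x|^{-2}|u|^2$ is not pointwise nonnegative, so one cannot simply invoke $\chi_R^2\le 1$ to discard it; the fix is to single out the exterior tail $\int_{|x|\ge R}|u|^2/|x|^2$ and absorb it by trading the gain $|x|^{-2}\le R^{-2}$ against conservation of mass. The remainder is a routine cutoff computation, and uniformity in $t$ is automatic, since both $M(u)$ and — via Lemma \ref{coer1} — the $\dot H^1_a$ bound on $u(t)$ are independent of $t$.
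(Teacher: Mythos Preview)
Your proof is correct and follows essentially the same approach as the paper's: control $\|\chi_R u\|_{L^2}$ trivially by $\|u\|_{L^2}$, use the integration-by-parts identity $\int\chi_R^2|\nabla u|^2=\int|\nabla(\chi_R u)|^2+\int\chi_R\Delta\chi_R|u|^2$ to bound $\|\chi_R u\|_{\dot H^1_a}^2\le\|u\|_{\dot H^1_a}^2+O(R^{-2}M(u))$, then absorb the $O(R^{-2})$ error into the strict gap provided by Lemma~\ref{coer1}. In fact you are more careful than the paper on one point: the paper writes only the gradient identity and then asserts the $\dot H^1_a$ bound, whereas you explicitly treat the potential contribution $a\int(1-\chi_R^2)|x|^{-2}|u|^2$ in the case $a<0$ by localizing to $\{|x|\ge R\}$ and trading $|x|^{-2}\le R^{-2}$ against mass conservation.
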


\begin{proof}
First note that
\begin{equation*}
\Vert \chi_{R} u(t) \Vert_{L_{x}^2(\mathbb{R}^d)} \leqslant \Vert u(t) \Vert_{L_{x}^2(\mathbb{R}^d)}
\end{equation*}
uniformly for $ t \in \mathbb{R} $.  Thus,  it suffices to consider the
$ \dot{H}_{a}^1(\mathbb{R}^d) $ term.  For this,  we will make use of the following identity:
\begin{equation*}
\int_{\mathbb{R}^d} \chi_{R}^2 \vert \nabla u \vert^2 \mathrm{d}x = \int_{\mathbb{R}^d} \vert \nabla ( \chi_{R} u ) \vert^2 + \chi_{R} \Delta (\chi_{R}) \vert u \vert^2 \mathrm{d}x,
\end{equation*}
which can be obtained by a direct computation.  In particular,
\begin{equation*}
\Vert \chi_{R} u \Vert_{\dot{H}_{a}^1(\mathbb{R}^d)}^2 \leqslant \Vert u \Vert_{\dot{H}_{a}^1(\mathbb{R}^d)}^2 + \mathcal{O}\left(  \tfrac{1}{R^2} M(u) \right).
\end{equation*}
Choosing $ R $ sufficiently large depending on $ \delta,  M(u) $ and $ Q $,  the result follows.
\end{proof}
\section{The Proof of Scattering Criterion}

Now We will follow the strategy in \cite{Dodson2016} to prove the scattering criterion (Proposition \ref{scat}) in this section.
 \begin{proof}
 Our proof is divided into four steps.

{\bf Step one. } We claim that if
\begin{equation}
u(t,  x) \in L_{t}^{4}L_{x}^{\tfrac{2d}{d+1-\gamma}}(\mathbb{R} \times \mathbb{R}^d),
\label{str1}
\end{equation}
then the solution of \eqref{problem-eq: NLS-H} is global and scatters,  i. e.
$ \exists ~ u_{+}(x) \in H_{x}^{1} (\mathbb{R}^{5}) $,  s. t.
\begin{equation}
\lim_{t\to + \infty} \Vert u(t) - e^{it\mathcal{L}_{a}} u_{+} \Vert_{H_{x}^{1}(\mathbb{R}^{5})} = 0.
\label{sca}
\end{equation}

Firstly,  using Strichartz estimates Theorem \ref{strichartz},
\begin{equation*}
\begin{aligned}
\left\Vert \int_{t_1}^{t_2}e^{-is\mathcal{L}_{a}}f(u(s)) \mathrm{d}s \right\Vert_{H_{x}^{1}(\mathbb{R}^d)}
= & \left\Vert e^{-it_2\mathcal{L}_{a}} \int_{t_1}^{t_2}e^{i(t_2 - s)\mathcal{L}_{a}}f(u(s)) \mathrm{d}s \right\Vert_{H_{x}^{1}(\mathbb{R}^d)} \\
= & \left\Vert \int_{t_1}^{t_2}e^{i( t_2 - s)\mathcal{L}_{a}}f(u(s)) \mathrm{d}s \right\Vert_{H_{x}^{1}(\mathbb{R}^d)} \\
\lesssim & \left\Vert (\vert \cdot \vert^{-\gamma} * \vert u \vert^{2})u
\right\Vert_{L_{t}^{2}W_{x}^{1,  \tfrac{2d}{d+2}}([t_1,  t_2]\times \mathbb{R}^{d})} \\
\leqslant & \left\Vert (\vert \cdot \vert^{-\gamma} * \vert u \vert^{2})u
\right\Vert_{L_{t}^{2}W_{x}^{1,  \tfrac{2d}{d+2}}([t_1,  + \infty)\times \mathbb{R}^d)},
\end{aligned}
\end{equation*}
where we have used the fact that $ e^{it\Delta} $ is a unitary group for any time $ t $.

By H\"older inequality and Hardy-Littlewood-Sobolev inequality,  the inner integral is
\begin{equation}
\begin{aligned}
& \left\Vert (\vert \cdot \vert^{-\gamma} * \vert u \vert^{2})u
\right\Vert_{W_{x}^{1,  \tfrac{2d}{d+2}}(\mathbb{R}^d)} \\
\leqslant & \left\Vert \vert \cdot \vert^{-\gamma} * \vert u \vert^{2} \right\Vert_{L_{x}^{d}(\mathbb{R}^d)} \Vert u \Vert_{H_{x}^{1}(\mathbb{R}^d)}
+ \Vert 2\Re(\overline{u} \nabla u) \Vert_{L_{x}^{\tfrac{2d}{1+\gamma}}(\mathbb{R}^d)}\Vert u \Vert_{L_{x}^{\tfrac{2d}{d+1-\gamma}}(\mathbb{R}^d)} \\
\leqslant & \left\Vert u \right\Vert_{L_{x}^{\tfrac{2d}{d+1-\gamma}}(\mathbb{R}^d)}^{2}\Vert u \Vert_{H_{x}^{1}(\mathbb{R}^d)} + 2\Vert u \Vert_{L_{x}^{\tfrac{2d}{d+1-\gamma}}(\mathbb{R}^d)}^{2}\Vert u \Vert_{H_{x}^{1}(\mathbb{R}^d)}.
\end{aligned}
\label{inner}
\end{equation}

Integrating with respect to time variables and using H\"older inequality,  so
\begin{equation*}
\begin{aligned}
 \left\Vert \int_{t_1}^{t_2}e^{-is\mathcal{L}_{a}}f(u(s)) \mathrm{d}s \right\Vert_{H_{x}^{1}(\mathbb{R}^d)}
\leqslant & \left\Vert (\vert \cdot \vert^{-\gamma} * \vert u \vert^{2})u
\right\Vert_{L_{t}^{2}W_{x}^{1,  \tfrac{2d}{d+2}}([t_1,  + \infty)\times \mathbb{R}^d)} \\
\leqslant & ~ 3 \left\Vert \Vert u \Vert_{L_{x}^{\tfrac{2d}{d+1-\gamma}}(\mathbb{R}^d)}^{2} \right\Vert_{L_{t}^{2}([t_1,  +\infty))}
\left\Vert \Vert u \Vert_{H_{x}^{1}(\mathbb{R}^d)}
\right\Vert_{L_{t}^{\infty}([t_1,  +\infty))} \\
= & ~ 3 \Vert u \Vert_{L_{t}^{4}L_{x}^{\tfrac{2d}{d+1-\gamma}}([t_1,  + \infty) \times \mathbb{R}^d)}^{2}\Vert u \Vert_{L_{t}^{\infty} H_{x}^{1}([t_1,  +\infty) \times \mathbb{R}^d)},
\end{aligned}
\end{equation*}
which means
\begin{equation}
\lim_{t_{1}, t_{2}\to+\infty} \left\Vert \int_{t_1}^{t_2}e^{-is\mathcal{L}_{a}}f(u(s)) \mathrm{d}s \right\Vert_{H_{x}^{1}(\mathbb{R}^d)} = 0
\label{cau}
\end{equation}
holds since \eqref{str1} holds.

Secondly,  if we set
\begin{equation*}
u_+=e^{-iT\mathcal{L}_{a}}u(T)-i\int_{T}^{+\infty}e^{-is\mathcal{L}_{a}}f(u(s))\mathrm{d}s,
\end{equation*}
we can get $ u_+ \in H_{x}^{1}(\mathbb{R}^d)$,   i. e.
$ \Vert u_+ \Vert_{H_{x}^{1}(\mathbb{R}^d)} < +\infty $.

Actually,  we have
\begin{equation*}
\Vert e^{-iT\mathcal{L}_{a}} u(T) \Vert_{H_{x}^{1}(\mathbb{R}^d)} = \Vert u(T) \Vert_{H_{x}^{1}(\mathbb{R}^d)} \leqslant \Vert u \Vert_{L_{t}^{\infty}H_{x}^{1}(\mathbb{R}\times\mathbb{R}^d)} < +\infty.
\end{equation*}
By Cauchy convergence criterion and \eqref{cau},  we also have
\begin{equation*}
\left\Vert\int_{T}^{\infty}e^{i(t-s)\La}f(u(s)) \mathrm{d}s \right\Vert_{H_{x}^{1}(\mathbb{R}^d)}
=\left\Vert\int_{T}^{\infty}e^{-is\mathcal{L}_{a}}f(u(s)) \mathrm{d}s \right\Vert_{H_{x}^{1}(\mathbb{R}^d)}
< +\infty.
\end{equation*}
Thus,  $ u_+ \in H_{x}^{1}(\mathbb{R}^d) $ holds by the definition of $ u_+ $.

Lastly,  we prove that \eqref{sca} holds.
By the Duhamel formula
\begin{equation}
u(t) = e^{i(t-T)\La} u(T) - i \int_{T}^{t} e^{i(t-s)\La} f(u(s)) \mathrm{d}s,
\label{Duh}
\end{equation}
we have
\begin{equation*}
u(t) - e^{it\La} u_+ = i \int_{t}^{+ \infty} e^{i(t-s)\La} f(u(s)) \mathrm{d}s.
\end{equation*}

Using the similar argument and \eqref{cau},
\begin{equation*}
\begin{aligned}
\Vert u(t) - e^{it\La} u_+ \Vert_{H_{x}^{1}(\mathbb{R}^d)}
= & \left\Vert \int_{t}^{+ \infty} e^{i(t-s)\La} f(u(s)) \mathrm{d}s \right\Vert_{H_{x}^{1}(\mathbb{R}^d)} \\
= & \left\Vert \int_{t}^{+ \infty} e^{-is\La} f(u(s)) \mathrm{d}s \right\Vert_{H_{x}^{1}(\mathbb{R}^d)} \\
\rightarrow & ~ 0,  ~~~~ ( t \rightarrow + \infty ).
\end{aligned}
\end{equation*}
Thus the claim holds.

{\bf Step two. } We boil down the problem further and assert that if
\begin{equation}
\lim_{T \to +\infty} \int_{T-l}^{T} \Vert f(u(s)) \Vert_{H_{x}^{1}(\mathbb{R}^d)} \mathrm{d}s
= 0
\label{nlt}
\end{equation}
holds, then \eqref{str1} holds.

Firstly,  by Duhamel formula,  we have
\begin{equation}
\begin{aligned}
e^{i(t-T)\La}u(T)
= & ~ e^{it\La}u_0 - i\int_{0}^{T} e^{i(t-s)\La}f(u(s)) \mathrm{d}s \\
= &:  e^{it\La}u_0 - F_1 - F_2,
\end{aligned}
\label{dec}
\end{equation}
where
\begin{equation*}
F_1 = i\int_{0}^{T-l} e^{i(t-s)\La}f(u(s)) \mathrm{d}s,
\end{equation*}
and
\begin{equation*}
F_2 = i\int_{T-l}^{T} e^{i(t-s)\La}f(u(s)) \mathrm{d}s,
\end{equation*}
where $l$ will be decided later.

Next let us deal with the three terms in \eqref{dec} respectively. Now denote $p = \tfrac{2d}{d+1-\gamma}.$

Owing to \eqref{ass} and Strichartz estimates,  we know that
\begin{equation}
\lim_{T \to +\infty} \left\Vert e^{it\La}u_0 \right\Vert_{L_{t}^{4}L_{x}^{p}
([T,  + \infty)\times \mathbb{R}^d)} = 0.~
(4, p)\in \Lambda_{\frac{\gamma-2}{2}}
\label{F0}
\end{equation}

For $ F_1 $,  note that
\begin{align*}
\Vert F_1 \Vert_{L_{t}^{4}L_{x}^{p}([T,  + \infty)\times \mathbb{R}^d)} \leqslant \Vert F_1 \Vert_{L_{t}^{\infty}L_{x}^{p}([T,  + \infty)\times \mathbb{R}^d)}^{\tfrac{\gamma-2}{\gamma-1}}
\Vert F_1 \Vert_{L_{t}^{\tfrac{4}{\gamma-1}}L_{x}^{p}([T,  + \infty)\times \mathbb{R}^d)}^{\tfrac{1}{\gamma-1}}.~ (\tfrac{4}{\gamma-1}, p)\in \Lambda_{0}
\end{align*}
We will estimate the two factors respectively.

On one hand,
\begin{equation*}
\begin{aligned}
& \Vert F_1 \Vert_{L_{t}^{\tfrac{4}{\gamma-1}}L_{x}^{p}([T, +\infty)\times \mathbb{R}^d)} \\
= & \left\Vert e^{i(t-T+l)\La} \int_{0}^{T-l} e^{i(T-l-s)\La}f(u(s)) \mathrm{d}s \right\Vert_{L_{t}^{\tfrac{4}{\gamma-1}}L_{x}^{p}([T, +\infty)\times \mathbb{R}^d)} \\
\leqslant & \left\Vert e^{i(t-T+l)\La} u(T-l) \right\Vert_{L_{t}^{\tfrac{4}{\gamma-1}}L_{x}^{p} ([T, +\infty) \times \mathbb{R}^d)}
+ \left\Vert e^{it\La} u_0 \right\Vert_{L_{t}^{\tfrac{4}{\gamma-1}}L_{x}^{p} ([T, +\infty)\times \mathbb{R}^d)} \\
\lesssim & ~ \Vert u(T-l) \Vert_{L_{x}^{2}(\mathbb{R}^d)} + \Vert u_0 \Vert_{L_{x}^{2}(\mathbb{R}^d)} \\
\leqslant & ~ 2 \Vert u \Vert_{L_{t}^{\infty}H_{x}^{1}(\mathbb{R}\times \mathbb{R}^d)}.
\end{aligned}
\end{equation*}

On the other hand,  now our position is into  to estimate $ \Vert F_1 \Vert_{L_{t}^{\infty} L_{x}^{p}([T, +\infty) \times \mathbb{R}^d)}$.

If $d\leq 2\gamma$, then using dispersive estimate \eqref{dis3}, H\"older inequality and Hardy-Littlewood-Sobolev inequality,
\begin{equation*}
\begin{aligned}
\left\Vert F_1 \right\Vert_{L_{x}^{p}(|x|\leq R_1)}
\leqslant &  \int_{0}^{T-l}\left\Vert e^{i(t-s)\La}f(u(s)) \right\Vert_{L_{x}^{p}(|x|\leq R_1)}\mathrm{d}s \\
\leqslant &  \int_{0}^{T-l} \Vert(1+|x|^{-\alpha})^{-1}  e^{i(t-s)\La}[(|x|^{-\gamma} * \vert u \vert^{2})u] \Vert_{L_{x}^{\infty}(\mathbb{R}^d)} \mathrm{d}s\Vert(1+|x|^{-\alpha})\Vert_{L_{x}^{p}(|x|\leq R_1)} \\
\leqslant & R_1^{\tfrac{d+1-\gamma}{2}}\int_{0}^{T-l} \vert t-s \vert^{-\tfrac{d}{2}+\alpha} \Vert(1+|x|^{-\alpha})  [(|x|^{-\gamma} * \vert u \vert^{2})u] \Vert_{L_{x}^{1}(\mathbb{R}^d)} \mathrm{d}s \\
\leqslant & ~ R_1^{\tfrac{d+1-\gamma}{2}}\vert t-T+l \vert^{-\tfrac{d-2}{2}+\alpha},
\end{aligned}
\end{equation*}
where $\alpha=0, $ if $a\geq 0$; and $\alpha=\sigma, $ if $a< 0$.
and we have used
\begin{align*}
\Vert (|x|^{-\gamma} * \vert u \vert^{2})u \Vert_{L_{x}^{1}(\mathbb{R}^d)}\lesssim& \Vert (|x|^{-\gamma} * \vert u \vert^{2}) \Vert_{L_{x}^{2}(\mathbb{R}^d)}\|u\|_{L_x^2}\\
\lesssim& \|u^2\|_{L_x^{\tfrac{2d}{3d-2\gamma}}}\|u\|_{L_x^2}\\
=& \|u\|^2_{L_x^{\tfrac{4d}{3d-2\gamma}}}\|u\|_{L_x^2}\lesssim \Vert u \Vert^3_{L_{t}^{\infty}H_{x}^{1}},
\end{align*}
where we need $2\leq\tfrac{4d}{3d-2\gamma}\leq \tfrac{2d}{d-2}$ and $3d-2\gamma>0$,  which implies $d\leq 2\gamma$.
\begin{align*}
\Vert|x|^{-\sigma} [(|x|^{-\gamma} * \vert u \vert^{2})u] \Vert_{L_{x}^{1}(\mathbb{R}^d)}\lesssim \||x|^{-\sigma}u\|_{L_x^2}\||x|^{-\gamma} * \vert u \vert^{2}\|_{L_x^2}\\
\lesssim \||\nabla|^{\sigma}u\|_{L_x^2}\| u \|^{2}_{L_x^{\tfrac{4d}{3d-2\gamma}}}\lesssim \Vert u \Vert^3_{L_{t}^{\infty}H_{x}^{1}},
\end{align*}
where $ 0 < \sigma< 1 $ by the assumption $ \eqref{important} $.

For $ d > 2 \gamma $
\begin{equation*}
\begin{aligned}
 &\left\Vert F_1 \right\Vert_{L_{x}^{p}(|x|\leq R_1)}
\leqslant \int_{0}^{T-l}\left\Vert e^{i(t-s)\La}f(u(s)) \right\Vert_{L_{x}^{p}(|x|\leq R_1)}\mathrm{d}s \\
&\leqslant \int_{0}^{T-l} \Vert(1+|x|^{-\alpha})^{-\tfrac{2\gamma}{d}}  e^{i(t-s)\La}[(|x|^{-\gamma} * \vert u \vert^{2})u] \Vert_{L_{x}^{\tfrac{2d}{d-2\gamma}}(\mathbb{R}^d)} \mathrm{d}s\Vert(1+|x|^{-\alpha})^{\tfrac{2\gamma}{d}}\Vert_{L_{x}^{\tfrac{2d}{\gamma+1}}(|x|\leq R_1)} \\
&\leqslant R_1^{\tfrac{1+\gamma}{2}}\int_{0}^{T-l} \vert t-s \vert^{(-\tfrac{d}{2}+\alpha)\tfrac{2\gamma}{d}} \Vert(1+|x|^{-\alpha})^{\tfrac{2\gamma}{d}}  [(|x|^{-\gamma} * \vert u \vert^{2})u] \Vert_{L_{x}^{\tfrac{2d}{d+2\gamma}}(\mathbb{R}^d)} \mathrm{d}s \\
&\leqslant ~ R_1^{\tfrac{1+\gamma}{2}}\vert t-T+l \vert^{(-\tfrac{d}{2}+\alpha)\tfrac{2\gamma}{d}+1}.
\end{aligned}
\end{equation*}
where $ \alpha = 0 $, if $ a \geq 0 $; and $ \alpha = \sigma $, if $ a < 0 $.
and we have used
\begin{align*}
\Vert (|x|^{-\gamma} * \vert u \vert^{2})u \Vert_{L_{x}^{\tfrac{2d}{d+2\gamma}}(\mathbb{R}^d)}\lesssim& \Vert (|x|^{-\gamma} * \vert u \vert^{2}) \Vert_{L_{x}^{\tfrac{d}{\gamma}}(\mathbb{R}^d)}\|u\|_{L_x^{2}}\\
\lesssim& \|u^2\|_{L_x^{1}}\|u\|_{L_x^{2}}
=\|u\|^3_{L_x^{2}}\lesssim \Vert u \Vert^3_{L_{t}^{\infty}H_{x}^{1}}
\end{align*}
and
\begin{align*}\Vert |x|^{-\alpha\tfrac{2\gamma}{d}}  [(|x|^{-\gamma} * \vert u \vert^{2})u] \Vert_{L_{x}^{\tfrac{2d}{d+2\gamma}}(\mathbb{R}^d)}\lesssim&  \Vert (|x|^{-\gamma} * \vert u \vert^{2}) \Vert_{L_{x}^{\tfrac{d}{\gamma}}(\mathbb{R}^d)}\||x|^{-\alpha\tfrac{2\gamma}{d}}\cdot u\|_{L_x^{2}}\\
\lesssim& \Vert u \Vert^2_{L_{x}^{2}}\Vert u \Vert_{L_{t}^{\infty}H_{x}^{1}}\\
\lesssim& \Vert u \Vert^3_{L_{t}^{\infty}H_{x}^{1}}
\end{align*}

On the exterior of the ball, using the radial Sobolev embedding we have
\begin{align*}
\left\Vert F_1 \right\Vert_{L_{x}^{p}(|x|\geq R_1)}\lesssim \|F_1\|^{\tfrac{d+1-\gamma}{d}}_{L_x^2}\|F_1\|^{\tfrac{\gamma-1}{d}}_{L_x^{\infty}(|x|\geq R_1)}\lesssim R_1^{-\tfrac{d-1}{2}\tfrac{\gamma-1}{d}}.
\end{align*}
Thus we complete the estimate of $F_1$ after choosing $ R_1^{\tfrac{d+1-\gamma}{2}} \vert t-T+l \vert^{-\tfrac{d-2}{2}+\alpha} = R_1^{-\tfrac{d-1}{2} \tfrac{\gamma-1}{d}} $.

For $ F_2 $,  by Strichartz estimate,  interpolation inequality and inequality of arithmetic and geometric mean ( i. e.  $ \sqrt{ab} \leqslant \tfrac{a + b}{2},  ~~ \forall ~ a,  b \geqslant 0 $ ),  we know
\begin{equation*}
\begin{aligned}
 &\left\Vert \int_{T-l}^{T} e^{i(t-s)\La}f(u(s)) \mathrm{d}s \right\Vert_{L_{t}^{4}L_{x}^{\tfrac{2d}{d+1-\gamma}}([T,  + \infty)\times \mathbb{R}^d)}\\
&\leqslant \int_{T-l}^{T} \left\Vert e^{i(t-s)\La} f(u(s)) \right\Vert_{L_{t}^{4}\dot{W}_{x}^{\tfrac{\gamma-2}{2},  \tfrac{2d}{d-1}}([T,  + \infty)\times \mathbb{R}^d)} \mathrm{d}s \\
&\lesssim \int_{T-l}^{T} \left\Vert f(u(s)) \right\Vert_{\dot{W}_{x}^{\tfrac{\gamma-2}{2},  2}(\mathbb{R}^d)} \mathrm{d}s \\
&\leqslant \int_{T-l}^{T} \Vert f(u(s)) \Vert_{L_{x}^{2}(\mathbb{R}^d)}^{\theta} \Vert f(u(s)) \Vert_{\dot{H}_{x}^{1}(\mathbb{R}^d)}^{1-\theta} \mathrm{d}s \\
 &\leqslant ~ \tfrac{1}{2} \int_{T-l}^{T} \Vert f(u(s)) \Vert_{H_{x}^{1}(\mathbb{R}^d)} \mathrm{d}s.
\end{aligned}
\end{equation*}
By the assumption \eqref{nlt},  it is obvious to get
\begin{equation}
\lim_{T \to +\infty} \left\Vert \int_{T-l}^{T} e^{i(t-s)\La}f(u(s)) \mathrm{d}s \right\Vert_{L_{t}^{4}L_{x}^{\tfrac{2d}{d+1-\gamma}}([T,  + \infty)\times \mathbb{R}^d)} = 0.
\label{F2}
\end{equation}

Combining with \eqref{dec}-\eqref{F2},  we have
\begin{equation}
\lim_{T \to +\infty} \left\Vert e^{i(t-T)\La}u(T) \right\Vert_{L_{t}^{4}L_{x}^{\tfrac{2d}{d+1-\gamma}}
([T,  + \infty)\times \mathbb{R}^d)} = 0.
\label{str2}
\end{equation}

Secondly,  making use of Duhamel formula \eqref{Duh} again we can see clearly that
\begin{equation*}
\begin{aligned} \Vert u \Vert_{L_{t}^{4}L_{x}^{\tfrac{2d}{d+1-\gamma}}([T,  + \infty)\times \mathbb{R}^d)}
\leqslant & \left\Vert e^{i(t-T)\La}u(T) \right\Vert_{L_{t}^{4}L_{x}^{\tfrac{2d}{d+1-\gamma}}
([T,  + \infty)\times \mathbb{R}^d)} \\
& + \left\Vert \int_{T}^{t}e^{i(t-s)\La}(\vert \cdot \vert^{-\gamma} * \vert u \vert^{2})u(s)\mathrm{d}s \right\Vert_{L_{t}^{4}L_{x}^{\tfrac{2d}{d+1-\gamma}}([T,  + \infty)\times \mathbb{R}^d)}.
\end{aligned}
\end{equation*}

Until now,  we can deduce from \eqref{str2} that the first term is small as long as $ T $ is large enough,  so we only need to estimate the second term.  In fact,  making use of the similar  estimate to $ F_{2} $ term,  we have
\begin{equation*}
\begin{aligned}
& \left\Vert \int_{T}^{t}e^{i(t-s)\La}(\vert \cdot \vert^{-\gamma} * \vert u \vert^{2})u(s)\mathrm{d}s \right\Vert_{L_{t}^{4}L_{x}^{\tfrac{2d}{d+1-\gamma}}([T,  + \infty)\times \mathbb{R}^d)} \\
\leqslant & \left\Vert \int_{T}^{t}e^{i(t-s)\La}(\vert \cdot \vert^{-\gamma} * \vert u \vert^{2})u(s)\mathrm{d}s \right\Vert_{L_{t}^{4}\dot{W}_{x}^{\tfrac{\gamma-2}{2},  \tfrac{2d}{d-1}}([T,  + \infty)\times \mathbb{R}^d)} \\
\lesssim & \left\Vert (\vert \cdot \vert^{-\gamma} * \vert u \vert^{2}) u \right\Vert_{L_{t}^{2}\dot{W}_{x}^{\tfrac{\gamma-2}{2},  \tfrac{2d}{d+2}}([T,  + \infty)\times \mathbb{R}^d)} \\
\leqslant & \left\Vert ~ \tfrac{3}{2} \Vert u \Vert_{L_{x}^{\tfrac{2d}{d+1-\gamma}}(\mathbb{R}^d)}^{2} \Vert u \Vert_{H_{x}^{1}(\mathbb{R}^d)} \right\Vert_{L_{t}^{2}([T,  +\infty))} \\
\leqslant & ~ \tfrac{3}{2} \Vert u \Vert_{L_{t}^{4}L_{x}^{\tfrac{2d}{d+1-\gamma}}([T,  +\infty) \times \mathbb{R}^d)}^{2} \Vert u \Vert_{L_{t}^{\infty}H_{x}^{1}([T,  +\infty) \times \mathbb{R}^d)}.
\end{aligned}
\end{equation*}
where we have used Strichartz estimats,  interpolation inequality,  inequality of arithmetic and geometric mean and \eqref{inner}.

Thus by continuity method and \eqref{str2},  we can easily deduce that \eqref{str1} holds.

{\bf Step three. } We assert that \eqref{nlt} holds if
\begin{equation}
\Vert u(t,  x) \Vert_{L_{t}^{\infty}L_{x}^{\tfrac{2d}{d+2-\gamma}}([T-l,  T] \times \mathbb{R}^d)}
\lesssim \epsilon^{\tfrac{1}{2}}
\label{eps}
\end{equation}
holds.
More accurately,  the upper bound of $ \int_{T-l}^{T} \Vert f(u(s)) \Vert_{H_{x}^{1} (\mathbb{R}^d)} \mathrm{d}s  $ is $ \epsilon^{\tfrac{1}{4}} $ for $ T $ sufficiently large.


Firstly,   for the nonlinear term $ f(u) = - (\vert \cdot \vert^{-\gamma} * \vert u \vert^{2})u $, we have
\begin{equation*}
\begin{aligned}
\Vert f(u) \Vert_{H_{x}^{1}(\mathbb{R}^d)}
\leqslant & \Vert \vert \cdot \vert^{-\gamma} * \vert u \vert^2 \Vert_{L_{x}^d(\mathbb{R}^d)}
\Vert u \Vert_{W_{x}^{1,  \tfrac{2d}{d-2}}(\mathbb{R}^d)} \\
& + \Vert \vert \cdot \vert^{-\gamma} * ( 2 u \nabla u ) \Vert_{L_{x}^{\tfrac{2d}{\gamma-1}}(\mathbb{R}^{d})} \Vert u \Vert_{L_{x}^{\tfrac{2d}{d+1-\gamma}}(\mathbb{R}^{d})}\\
\leqslant & \Vert u^2 \Vert_{L_{x}^{\tfrac{d}{d+1-\gamma}}(\mathbb{R}^d)}
\Vert u \Vert_{W_{x}^{1,  \tfrac{2d}{d-2}}(\mathbb{R}^d)} + \Vert 2 u \nabla u \Vert_{L_{x}^{\tfrac{2d}{2d-1-\gamma}}(\mathbb{R}^d)}
\Vert u \Vert_{L_{x}^{\tfrac{2d}{d+1-\gamma}}(\mathbb{R}^d)} \\
\leqslant & ~ 3 \Vert u \Vert_{L_{x}^{\tfrac{2d}{d+1-\gamma}}(\mathbb{R}^d)}^2
\Vert u \Vert_{W_{x}^{1,  \tfrac{2d}{d-2}}(\mathbb{R}^d)}.
\end{aligned}
\end{equation*}
Substituting it into the integral to time and get
\begin{equation}
\begin{aligned}
& \int_{T-l}^{T} \Vert f(u(s)) \Vert_{H_{x}^{1} (\mathbb{R}^d)} \mathrm{d}s \\
\leqslant & ~ \Vert u \Vert_{L_{t}^{\infty}L_{x}^{\tfrac{2d}{d+2-\gamma}}([T-l,  T] \times \mathbb{R}^d)}
\Vert u \Vert_{L_{t}^2 L_{x}^{\tfrac{2d}{d-\gamma}}([T-l,  T] \times \mathbb{R}^d)}
\Vert u \Vert_{L_{t}^2 W_{x}^{1,  \tfrac{2d}{d-2}}([T-l,  T] \times \mathbb{R}^d)}.
\end{aligned}
\label{Hol}
\end{equation}

Secondly,  note that we have established the following Strichartz inequalities:
\begin{equation*}
\Vert e^{it\La} u \Vert_{L_{t}^2 W_{x}^{1,  \tfrac{2d}{d-2}}(\mathbb{R} \times \mathbb{R}^d)} \lesssim \Vert u \Vert_{H_{x}^1(\mathbb{R}^d)},
\end{equation*}

\begin{equation*}
\Vert e^{it\La} u \Vert_{L_{t}^{\infty} H_{x}^{1}(\mathbb{R} \times \mathbb{R}^d)} \lesssim \Vert u \Vert_{H_{x}^1(\mathbb{R}^d)},
\end{equation*}

By Sobolev embedding,  we have
\begin{equation*}
\Vert e^{it\La} u \Vert_{L_{t}^{2} L_{x}^{\tfrac{2d}{d-\gamma}}(\mathbb{R} \times \mathbb{R}^d)} \leqslant
\Vert e^{it\La} u \Vert_{L_{t}^2 W_{x}^{1,  \tfrac{2d}{d-2}}(\mathbb{R} \times \mathbb{R}^d)}
\lesssim \Vert u \Vert_{H_{x}^1(\mathbb{R}^d)},
\end{equation*}

And we deduce
\begin{equation*}
\Vert u \Vert_{L_{t}^2 W_{x}^{1,  \tfrac{2d}{d-2}}([T-l,  T] \times \mathbb{R}^d)} \lesssim \langle l \rangle^{\tfrac{1}{2}},
\end{equation*}

\begin{equation*}
\Vert u \Vert_{L_{t}^{2} L_{x}^{\tfrac{2d}{d-\gamma}}([T-l,  T] \times \mathbb{R}^d)} \lesssim \langle l \rangle^{\tfrac{1}{2}},
\end{equation*}
where $ \langle a \rangle $ denotes $ \left( 1 + \vert a \vert^{2} \right)^{\tfrac{1}{2}} $ for any $ a \in \mathbb{R} $.

Set
\begin{equation*}
X : = \Vert u \Vert_{L_{t}^2 W_{x}^{1,  \tfrac{2d}{d-2}}([T-l,  T] \times \mathbb{R}^d)} + \Vert u \Vert_{L_{t}^{2} L_{x}^{\tfrac{2d}{d-\gamma}}([T-l,  T] \times \mathbb{R}^d)},
\end{equation*}
then by Duhamel formula \eqref{Duh},  we have
\begin{equation*}
\begin{aligned}
X \leqslant & ~ 3 \Vert u(T) \Vert_{H_{x}^1(\mathbb{R}^d)} + 3 \int_{T-l}^{T} \Vert F(u(s)) \Vert_{H_{x}^1(\mathbb{R}^d)} \mathrm{d}x \\
\leqslant & ~ 3 \Vert u(T) \Vert_{H_{x}^1(\mathbb{R}^d)} + \Vert u \Vert_{L_{t}^{\infty}L_{x}^{\tfrac{2d}{d+2-\gamma}}([T-l,  T] \times \mathbb{R}^d)}
\Vert u \Vert_{L_{t}^2 L_{x}^{\tfrac{2d}{d-\gamma}}([T-l,  T] \times \mathbb{R}^d)}
\Vert u \Vert_{L_{t}^2 W_{x}^{1,  \tfrac{2d}{d-2}}([T-l,  T] \times \mathbb{R}^d)} \\
\leqslant & ~ 3 \Vert u(T) \Vert_{H_{x}^1(\mathbb{R}^d)} + \Vert u \Vert_{L_{t}^{\infty}H_x^1([T-l,  T] \times \mathbb{R}^d)}
\Vert u \Vert_{L_{t}^2 L_{x}^{\tfrac{2d}{d-\gamma}}([T-l,  T] \times \mathbb{R}^d)}
\Vert u \Vert_{L_{t}^2 W_{x}^{1,  \tfrac{2d}{d-2}}([T-l,  T] \times \mathbb{R}^d)} \\
\leqslant & ~ 3 \Vert u(T) \Vert_{H_{x}^1(\mathbb{R}^d)} ( 1 + l^{\tfrac{1}{2}} X^{2} ).
\end{aligned}
\end{equation*}

Continuity method yields
\begin{equation*}
X \lesssim \langle l \rangle^{\tfrac{1}{2}} \lesssim \epsilon^{- \tfrac{1}{8}},  ~~~~~~ \text{for} ~ l = \epsilon^{- \tfrac{1}{4}}.
\end{equation*}
Combining with \eqref{eps} and \eqref{Hol},  we have
\begin{equation*}
\int_{T-l}^{T} \Vert f(u(s)) \Vert_{H_{x}^{1} (\mathbb{R}^d)} \mathrm{d}s
\lesssim \epsilon^{\tfrac{1}{2}} \epsilon^{- \tfrac{1}{8}} \epsilon^{- \tfrac{1}{8}}
= \epsilon^{\tfrac{1}{4}},
\end{equation*}
which implies \eqref{nlt} holds.

{\bf Step four. } At the end of the proof,  we prove \eqref{eps} holds.

Note the assumption \eqref{L2dl},  we can see that there exists $ T_1 > 0 $ such that
\begin{equation*}
\int_{\vert x \vert \leqslant R} \vert u(T_1,  x) \vert^2 \mathrm{d}x \leqslant \epsilon^{\tfrac{2}{4-\gamma}}.
\end{equation*}
Fix $ \varphi_{R} \in C_c^{\infty}(\mathbb{R}^d) $,  define the radial function:
\begin{equation*}
\varphi_{R}(x) = \left\{
\begin{aligned}
1,  & ~~~~ \vert x \vert \leqslant \tfrac{R}{2} ; \\
0,  & ~~~~ \vert x \vert \geqslant R,
\end{aligned}
\right.
\end{equation*}

Then we have
\begin{equation*}
\int_{\mathbb{R}^d} \vert \varphi_{R}(x) u(T_1,  x) \vert^2 \mathrm{d}x \leqslant \epsilon^{\tfrac{2}{4-\gamma}}.
\end{equation*}

Using the identity
\begin{equation*}
\partial_{t} \vert u \vert^2 = -2 \nabla \Im ( \overline{u} \nabla u ),
\end{equation*}
which follows from \eqref{problem-eq: NLS-H},  together with integration by parts and Cauchy-Schwarz inequality,  we deduce
\begin{equation*}
\begin{aligned}
 \left\vert \partial_{t} \int_{\mathbb{R}^d} \vert \varphi_{R}(x) \vert^2 \vert u(T_1,  x) \vert^2 \mathrm{d}x \right\vert
\leqslant & ~ 2 \int_{\mathbb{R}^d} \varphi_{R}(x) \vert \nabla \varphi_{R}(x) \vert \vert \Im (\overline{u}\nabla u) \vert \mathrm{d}x \\
\leqslant & ~ \tfrac{6}{R} \Vert u \Vert_{H_{x}^1(\mathbb{R}^d)}^2.
\end{aligned}
\end{equation*}

Thus,  we can find
\begin{equation*}
\begin{aligned}
& \Vert \varphi_{R}(x) u(t,  x) \Vert_{L_{t}^{\infty} L_{x}^2 ([T-l,  T] \times \mathbb{R}^d)} \\
\leqslant & \left\Vert \left( \int_{\mathbb{R}^d} \vert \varphi_{R}(x) \vert^2 \vert u(T_1,  x) \vert^2 \mathrm{d}x + \tfrac{6}{R} \Vert u \Vert_{H_{x}^1(\mathbb{R}^d)}^2 \vert t - T_1 \vert \right)^{\tfrac{1}{2}} \right\Vert_{L_{t}^{\infty} ([T-l,  T])} \\
\leqslant & \left\Vert \left( \epsilon^{\tfrac{2}{4-\gamma}} + \tfrac{6}{R} \Vert u \Vert_{H_{x}^1(\mathbb{R}^d)}^2 \vert t - T_1 \vert \right)^{\tfrac{1}{2}} \right\Vert_{L_{t}^{\infty} ([T-l,  T])}
\lesssim  ~ \epsilon^{\tfrac{1}{4-\gamma}},
\end{aligned}
\end{equation*}
by choosing $ l = \epsilon^{-\tfrac{1}{4}} $ and $ R \geqslant \max\{ \epsilon^{-\tfrac{1}{4}-\tfrac{2}{4-\gamma}},  \epsilon^{-\tfrac{d}{4(\gamma-2)}} \} $,  and $ t,  T_1 \in [T-l,  T]$.

Thus,  by interpolation inequality,  we have
\begin{equation*}
\begin{aligned}
& \Vert \varphi_{R}(x) u(t,  x) \Vert_{L_{t}^{\infty} L_{x}^{\tfrac{2d}{d+2-\gamma}} ([T-l,  T] \times \mathbb{R}^d)} \\
\leqslant & \Vert \varphi_{R}(x) u(t,  x) \Vert_{L_{t}^{\infty} L_{x}^2 ([T-l,  T] \times \mathbb{R}^d)}^{\tfrac{4-\gamma}{2}} \Vert \varphi_{R}(x) u(t,  x) \Vert_{L_{t}^{\infty} L_{x}^{\tfrac{2d}{d-2}} ([T-l,  T] \times \mathbb{R}^d)}^{\tfrac{\gamma-2}{2}} \\
\leqslant &\Vert \varphi_{R}(x) u(t,  x) \Vert_{L_{t}^{\infty} L_{x}^2 ([T-l,  T] \times \mathbb{R}^d)}^{\tfrac{4-\gamma}{2}} \Vert \varphi_{R}(x) u(t,  x) \Vert_{L_{t}^{\infty} H_{x}^{1} (\mathbb{R} \times \mathbb{R}^d)}^{\tfrac{\gamma-2}{2}}
\lesssim \epsilon^{\tfrac{1}{2}}
\end{aligned}
\end{equation*}
and
\begin{equation*}
\begin{aligned}
& \Vert ( 1 - \varphi_{R}(x) ) u(t,  x) \Vert_{L_{t}^{\infty} L_{x}^{\tfrac{2d}{d+2-\gamma}} ([T-l,  T] \times \mathbb{R}^d)} \\
\leqslant & \Vert ( 1 - \varphi_{R}(x) ) u(t,  x) \Vert_{L_{t}^{\infty} L_{x}^{\infty} ([T-l,  T] \times \mathbb{R}^d)}^{\tfrac{\gamma-2}{d}} \Vert ( 1 - \varphi_{R}(x) ) u(t,  x) \Vert_{L_{t}^{\infty} L_{x}^{2} ([T-l,  T] \times \mathbb{R}^d)}^{\tfrac{d+2-\gamma}{d}} \\
\leqslant & \left(\tfrac{4}{R^2}\right)^{\tfrac{\gamma-2}{d}} \Vert u(t,  x) \Vert_{L_{t}^{\infty} H_{x}^{1} (\mathbb{R} \times \mathbb{R}^d)}^{\tfrac{d+2-\gamma}{d}}
\lesssim \epsilon^{\tfrac{1}{2}}.
\end{aligned}
\end{equation*}
So,
\begin{equation}
\Vert u(t,  x) \Vert_{L_{t}^{\infty}L_{x}^{\tfrac{2d}{d+2-\gamma}}([T-l,  T] \times \mathbb{R}^d)}
\lesssim \epsilon^{\tfrac{1}{2}}
\end{equation}
for $ l = \epsilon^{-\tfrac{1}{4}} $.

Combing the above steps,  we finish the proof of the scattering criterion only if $ \epsilon > 0 $ is arbitrarily small.
\end{proof}

\newpage
{\center\section{Morawetz Estimate}}
In this section,  we are now in the position to prove the  Morawetz Estimate \eqref{Me} holds.
 As we all know,  the decay estimate of the solution $ u $ can be  characterized by Morawetz estimate.

We define the function:
\begin{equation}
M(t)=2\Im\int_{\mathbb{R}^d}\overline{u}\nabla u \cdot \nabla \omega \mathrm{d}x,
\label{Mar}
\end{equation}
where $ u = u(t,  x) $ is the solution of \eqref{problem-eq: NLS-H},  and $ \omega = \omega(x) $ is a real function to be chosen later.

Then we have
\begin{equation}
\begin{aligned}
\tfrac{\mathrm{d}}{\mathrm{d}t}M(t)
= & ~ 4\sum_{k=1}^d\sum_{j=1}^d\Re\int_{\mathbb{R}^d}\overline{u_{k}} u_j \omega_{kj} \mathrm{d}x
+ \int_{\mathbb{R}^d}\vert u \vert^2 (- \Delta \Delta \omega ) +4|u|^2\frac{ax}{|x|^4}\cdot\nabla \omega\mathrm{d}x \\
& + (-\gamma)\int_{\mathbb{R}^d}\int_{\mathbb{R}^d}\tfrac{ x-y }{\vert x-y \vert^{\gamma+2}} \vert u(x) \vert^{2} \vert u(y) \vert^2 \cdot (\nabla \omega(x) - \nabla \omega(y)) \mathrm{d}x \mathrm{d}y,
\end{aligned}
\label{MeH1}
\end{equation}
where we have used
\begin{equation*}
\begin{aligned}
& \int_{\mathbb{R}^d}(\tfrac{ \cdot }{\vert \cdot \vert^{\gamma+2}} * \vert u \vert^{2})\vert u \vert^2 \cdot \nabla \omega \mathrm{d}x \\
= & \int_{\mathbb{R}^d}\int_{\mathbb{R}^d}\tfrac{ x-y }{\vert x-y \vert^{\gamma+2}} \vert u(y) \vert^{2} \vert u(x) \vert^2 \cdot \nabla \omega(x) \mathrm{d}y \mathrm{d}x \\
= & \int_{\mathbb{R}^d}\int_{\mathbb{R}^d}\tfrac{ y-x }{\vert y-x \vert^{\gamma+2}} \vert u(x) \vert^{2} \vert u(y) \vert^2 \cdot \nabla \omega(y) \mathrm{d}x \mathrm{d}y \\
= & ~ \tfrac{1}{2}\int_{\mathbb{R}^d}\int_{\mathbb{R}^d}\tfrac{ x-y }{\vert x-y \vert^{\gamma+2}} \vert u(x) \vert^{2} \vert u(y) \vert^2 \cdot (\nabla \omega(x) - \nabla \omega(y)) \mathrm{d}x \mathrm{d}y.
\end{aligned}
\end{equation*}
For fixed $ R \gg 1 $,  we choose $ \omega(x) $ in \eqref{Mar} to be a hybrid function
\begin{equation}
\omega(x)=
\left\{
\begin{aligned}
\vert x \vert^2,  ~~~~ & \vert x \vert \leqslant R,  \\
3R \vert x \vert,  ~~~~ & \vert x \vert \geqslant 2R
\end{aligned}
\right.
\label{ax}
\end{equation}
satisfying
\begin{equation*}
\begin{aligned}
(\romannumeral1) ~ & \partial_{r} \omega \geqslant 0 ~ ; \\
(\romannumeral2) ~ & \partial_{r}^2 \omega \geqslant 0 ~ ; \\
(\romannumeral3) ~ & \vert \partial_{\alpha} \omega(x) \vert
\leqslant C_{\alpha} R \vert x \vert^{-\vert \alpha \vert +1},  ~~~~
\text{when}~ R < \vert x \vert < 2R ~.
\end{aligned}
\end{equation*}
Here $ \partial_r $ denotes the radial derivative,  i. e.
$ \partial_r \omega = \nabla \omega \cdot \tfrac{x}{\vert x \vert} $.  Under these conditions,  the matrix $ \omega_{jk} $ is non-negative.  And
\begin{align*}
\end{align*}

\begin{equation}
\begin{aligned}
\tfrac{\mathrm{d}}{\mathrm{d}t} M(t) \geqslant & ~ \left( 8 \sum_{k=1}^d \sum_{j=1}^d \Re \int_{\vert x \vert \leqslant R} \overline{u_{k}} u_j \delta_{kj} \mathrm{d}x
- 2\gamma \int_{\vert x \vert \leqslant R}\int_{\mathbb{R}^d}\tfrac{ \vert u(x) \vert^{2} \vert u(y) \vert^2 }{\vert x-y \vert^{\gamma}} \mathrm{d}y \mathrm{d}x \right) \\
& + 12 \sum_{k=1}^d \sum_{j=1}^d \Re \int_{\vert x \vert \geqslant 2R} \tfrac{R}{\vert x \vert}
\left[ \delta_{jk} - \tfrac{x_j}{\vert x \vert} \tfrac{x_k}{\vert x \vert} \right] \overline{u_{k}} u_j \mathrm{d}x
+ 24 \int_{\vert x \vert \geqslant 2R} \tfrac{\vert u \vert^2 R}{\vert x \vert^3} \mathrm{d}x \\
& - 3\gamma \int_{\vert x \vert \geqslant 2R} \int_{\mathbb{R}^d}
\tfrac{ x-y }{\vert x-y \vert^{\gamma+2}} \vert u(x) \vert^{2} \vert u(y) \vert^{2}
\cdot \left( \tfrac{Rx}{\vert x \vert} - \tfrac{Ry}{\vert y \vert} \right) \mathrm{d}y \mathrm{d}x \\
& + \int_{ R < \vert x \vert < 2R } \mathcal{O}( \tfrac{\vert u \vert^2 R}{\vert x \vert^3} ) \mathrm{d}x + \int_{ R < \vert x \vert < 2R } \int_{\mathbb{R}^d} \mathcal{O}( \tfrac{ \vert u(x) \vert^{2} \vert u(y) \vert^2 }{\vert x-y \vert^{\gamma}} ) \mathrm{d}y \mathrm{d}x \\
+&8\int_{\vert x \vert \leqslant R} a\frac{|u|^2}{|x|^2}\mathrm{d}x+
\int_{ R < \vert x \vert < 2R } 4|u|^2\frac{ax}{|x|^4}\cdot\nabla \omega\mathrm{d}x+
 12R\int_{\vert x \vert \geqslant 2R}\frac{a|u|^2}{|x|^3}\mathrm{d}x\\
= & : \mathcal{A + B + C + D + E + F}.
\end{aligned}
\label{MeHa}
\end{equation}

We will make use of radial Sobolev embedding to estimate every terms in \eqref{MeHa}.  Now we  choose a cut function $ \chi_R \in C^{\infty}(\mathbb{R}^d) $,  satisfying
\begin{equation*}
\chi_R (x) = \left\{
\begin{aligned}
1,  ~~~~ & \vert x \vert \leqslant \tfrac{R}{2},  \\
0,  ~~~~ & \vert x \vert \geqslant R.
\end{aligned}
\right.
\end{equation*}

For $ \mathcal{A} $,  according to Lemma \ref{coerb},
\begin{equation*}
\begin{aligned}
\mathcal{A}
= & ~ 8 \int_{\vert x \vert \leqslant R} \vert \nabla u \vert^2
\mathrm{d}x +8\int_{\vert x \vert \leqslant R} a\frac{|u|^2}{|x|^2}\mathrm{d}x- 2\gamma \int_{\vert x \vert \leqslant \tfrac{R}{2}}
\int_{\vert y \vert \leqslant \tfrac{R}{2}}
\tfrac{ \vert u(x) \vert^{2} \vert u(y) \vert^2 }
{\vert x-y \vert^{\gamma}} \mathrm{d}y \mathrm{d}x \\
& - 2\gamma \int_{\tfrac{R}{2} < \vert x \vert \leqslant R}
\int_{\vert y \vert \leqslant \tfrac{R}{2}}
\tfrac{ \vert u(x) \vert^{2} \vert u(y) \vert^2 }
{\vert x-y \vert^{\gamma}} \mathrm{d}y \mathrm{d}x
-2\gamma \int_{\vert x \vert \leqslant R}\int_{\vert y \vert > \tfrac{R}{2}}
\tfrac{ \vert u(x) \vert^{2} \vert u(y) \vert^2 }
{\vert x-y \vert^{\gamma}} \mathrm{d}y \mathrm{d}x \\
\geqslant & ~ 8 \left[ \Vert \chi_{R} u \Vert_{\dot{H}_{a}^1(\mathbb{R}^d)}^2
- \tfrac{\gamma}{4} P(\chi_{R} u) \right] \\
& - 2\gamma\int_{\tfrac{R}{2} < \vert x \vert \leqslant R}
\int_{\vert y \vert \leqslant \tfrac{R}{2}}
\tfrac{ \vert u(x) \vert^{2} \vert u(y) \vert^2 }
{\vert x-y \vert^{\gamma}} \mathrm{d}y \mathrm{d}x
- 2\gamma\int_{\vert x \vert \leqslant R}\int_{\vert y \vert > \tfrac{R}{2}}
\tfrac{ \vert u(x) \vert^{2} \vert u(y) \vert^2 }
{\vert x-y \vert^{\gamma}} \mathrm{d}y \mathrm{d}x \\
\gtrsim & ~ \delta^\prime P(\chi_{R} u) - \mathcal{A}^\prime - \mathcal{A}^{\prime\prime},
\end{aligned}
\end{equation*}
where
\begin{equation*}
\begin{aligned}
\mathcal{A}^\prime = & ~ 2\gamma \int_{\tfrac{R}{2} < \vert x \vert \leqslant R}
\int_{\vert y \vert \leqslant \tfrac{R}{2}}
\tfrac{ \vert u(x) \vert^{2} \vert u(y) \vert^2 }
{\vert x-y \vert^{\gamma}} \mathrm{d}y \mathrm{d}x \\
\lesssim  & ~ \tfrac{\gamma}{R^\alpha} \int_{\tfrac{R}{2} < \vert x \vert \leqslant R}
\int_{\vert y \vert \leqslant \tfrac{R}{2}}
\vert x \vert^{2\alpha} \vert u(x) \vert^{\alpha}
\tfrac{ \vert u(x) \vert^{2-\alpha} \vert u(y) \vert^2 }
{\vert x-y \vert^{\gamma}} \mathrm{d}y \mathrm{d}x \\
\lesssim & ~  \tfrac{\gamma}{R^\alpha}  \Vert u \Vert_{L_{x}^2(\mathbb{R}^d)}^{\tfrac{\alpha}{2}}
\Vert \nabla u \Vert_{L_{x}^2(\mathbb{R}^d)}^{\tfrac{\alpha}{2}}
\int_{\tfrac{R}{2} < \vert x \vert \leqslant R}
\int_{\vert y \vert \leqslant \tfrac{R}{2}}
\tfrac{ \vert u(x) \vert^{2-\alpha} \vert u(y) \vert^2 }
{\vert x-y \vert^{\gamma}} \mathrm{d}y \mathrm{d}x \\
\lesssim & ~ \tfrac{\gamma}{R^\alpha}  \Vert u \Vert_{L_{x}^2(\mathbb{R}^d)}^{2-\tfrac{\alpha}{2}}
\Vert \nabla u \Vert_{L_{x}^2(\mathbb{R}d)}^{\tfrac{\alpha}{2}}
\left\Vert \int_{\mathbb{R}^d} \tfrac{ \vert u(y) \vert^{2}}{\vert x-y \vert^{\gamma}}
\mathrm{d}y \right\Vert_{L_{x}^q(\mathbb{R}^d)} \\
\lesssim & ~  \tfrac{\gamma}{R^\alpha}  \Vert u \Vert_{L_{x}^2(\mathbb{R}^d)}^{2-\tfrac{\alpha}{2}}
\Vert \nabla u \Vert_{L_{x}^2(\mathbb{R}d)}^{\tfrac{\alpha}{2}}
\Vert u \Vert_{L_{x}^{p}(\mathbb{R}^d)}^2 \\
\lesssim & ~  \tfrac{\gamma}{R^\alpha}  \Vert u \Vert_{L_{x}^2(\mathbb{R}^d)}^{3}
\Vert \nabla u \Vert_{L_{x}^2(\mathbb{R}^d)},
\end{aligned}
\end{equation*}
where we need
\begin{align*}
1=\tfrac{2-\alpha}{2}+\tfrac1{q},\\
\tfrac1{q}+1=\tfrac{\gamma}d+\tfrac2{p},\\
2<p<\frac{2d}{d-2},
\end{align*}
which yields
$$\alpha=\tfrac{2\gamma}{d}+\tfrac4{p}-1,\\
\tfrac{2d}{2d-\gamma}\leq p<\tfrac{2d}{d-2}$$
and by the same argument,
\begin{equation*}
\begin{aligned}
\mathcal{A}^{\prime\prime}
\leqslant & ~ \tfrac{C}{R^2} \Vert u \Vert_{L_{x}^2(\mathbb{R}^d)}^{3}
\Vert \nabla u \Vert_{L_{x}^2(\mathbb{R}^d)}.
\end{aligned}
\end{equation*}

For $ \mathcal{B} $,  because of
$ \not\nabla u = \nabla u - \tfrac{x}{\vert x \vert}\partial_{r}u $,
and $ \partial_{r}u = \tfrac{x}{\vert x \vert}\nabla u $,  we have
\begin{equation*}
\not\nabla u = \nabla u - \tfrac{x}{\vert x \vert} \tfrac{x \cdot \nabla u}{\vert x \vert}.
\end{equation*}
so,
\begin{equation*}
\begin{aligned}
\vert\not\nabla u \vert^2
= & ~ \vert \nabla u \vert^2 - \sum_{j=1}^d \sum_{k=1}^d \tfrac{x_j}{\vert x \vert} \tfrac{x_k}{\vert x \vert} u_j \overline{u_k}.
\end{aligned}
\end{equation*}

For $ u $ is radial,  thus
\begin{equation*}
\begin{aligned}
\mathcal{B}
= & ~ 4 \sum_{k=1}^d \sum_{j=1}^d \Re \int_{\vert x \vert \geqslant 2R} \tfrac{R}{\vert x \vert}
\left[ \delta_{jk} - \tfrac{x_j}{\vert x \vert} \tfrac{x_k}{\vert x \vert} \right] \overline{u_{k}} u_j \mathrm{d}x \\
= & ~ 4 \Re \int_{\vert x \vert \geqslant 2R} \tfrac{R}{\vert x \vert}
\left[ \vert \nabla u \vert^2 - \sum_{j=1}^d \sum_{k=1}^d \tfrac{x_j}{\vert x \vert} \tfrac{x_k}{\vert x \vert} u_j \overline{u_k} \right] \mathrm{d}x \\
= & ~ 4 \int_{\vert x \vert \geqslant 2R} \tfrac{R}{\vert x \vert} ~ \vert\not\nabla u \vert^2 \mathrm{d}x
= ~ 0.
\end{aligned}
\end{equation*}

It is obvious that
\begin{equation*}
\mathcal{C} \geqslant 0,  ~~~~
\vert \mathcal{E} \vert \lesssim \tfrac{1}{R^2} \Vert u \Vert_{L_{x}^2(\mathbb{R}^d)}.
\end{equation*}
Similar to the estimates of $ \mathcal{A}^{\prime} $ and $ \mathcal{A}^{\prime\prime} $,  we have
\begin{equation*}
\vert  \mathcal{D} \vert
\leqslant \tfrac{C}{R} \Vert u \Vert_{L_{x}^2(\mathbb{R}^d)}^{2}
\Vert \nabla u \Vert_{L_{x}^2(\mathbb{R}d)}^{2}
\end{equation*}
and
\begin{equation*}
\vert \mathcal{F} \vert
\leqslant \tfrac{C}{R^2} \Vert u \Vert_{L_{x}^2(\mathbb{R}^d)}^{3}
\Vert \nabla u \Vert_{L_{x}^2(\mathbb{R}^d)}.
\end{equation*}

Continuing from above and \eqref{MeHa},  we discard non-negative terms and deduce
\begin{equation*}
\begin{aligned}
\tfrac{\mathrm{d}}{\mathrm{d}t} M(t) = & ~ \mathcal{A + B + C + D + E + F + G} \\
\gtrsim & ~ \delta^\prime P(\chi_{R} u) - \mathcal{A}^\prime - \mathcal{A}^{\prime\prime}
+ \mathcal{D + E + F},
\end{aligned}
\end{equation*}
which implies
\begin{equation*}
\begin{aligned}
\delta^\prime P(\chi_{R} u) \lesssim & ~ \tfrac{\mathrm{d}}{\mathrm{d}t} M(t)
+ \mathcal{A}^\prime + \mathcal{A}^{\prime\prime} + \vert \mathcal{D} \vert
+ \vert \mathcal{E} \vert + \vert \mathcal{F} \vert \\
\lesssim & ~ \tfrac{\mathrm{d}}{\mathrm{d}t} M(t) + \tfrac{1}{R^2} + \tfrac{1}{R} \\
\lesssim & ~ \tfrac{\mathrm{d}}{\mathrm{d}t} M(t) + \tfrac{1}{R}.
\end{aligned}
\end{equation*}

The fundamental theorem of calculus tells us
\begin{equation*}
\delta^\prime \int_0^T P(\chi_{R} u) \mathrm{d}t \lesssim \sup_{t \in [0,  T]} \vert M(t) \vert
+ \tfrac{T}{R},
\end{equation*}
or expressed as
\begin{equation*}
\tfrac{1}{T}\int_0^T P(\chi_{R} u) \mathrm{d}t \lesssim_{\delta,  u} \tfrac{1}{T} + \tfrac{1}{R}.
\end{equation*}
Thus,  we have proved \eqref{Me} exactly and finished the proof.

\end{document}